\documentclass[11pt]{article}

\usepackage[T1]{fontenc}
\usepackage[utf8]{inputenc}
\usepackage{lmodern}

\usepackage{amsmath, amssymb, amsthm}
\usepackage{mathrsfs}

\usepackage[margin=2.5cm]{geometry}
\usepackage{microtype}

\usepackage{xcolor, graphicx, float}
\usepackage[breakable]{tcolorbox}
\usepackage[shortlabels]{enumitem}

\usepackage{algorithm}
\usepackage{algorithmic}
\numberwithin{algorithm}{section}

\usepackage{etoolbox}

\makeatletter
\patchcmd{\@maketitle}{\LARGE \@title}{\LARGE\bfseries\@title}{}{}

\renewcommand{\@seccntformat}[1]{\csname the#1\endcsname.\quad}

\renewcommand\paragraph{\@startsection{paragraph}{4}{\z@}%
    {1.5ex \@plus1ex \@minus.2ex}%
    {-1em}%
    {\normalfont\normalsize\bfseries}}
\makeatother

\makeatletter
\def\th@plain{%
	\thm@notefont{}
	\itshape 
}
\def\th@definition{%
	\thm@notefont{}
	\normalfont 
}

\renewenvironment{proof}[1][\proofname]{\par
	\normalfont
	\topsep0\p@\@plus3\p@ \trivlist
	\item[\hskip\labelsep\itshape
	#1\@addpunct{.}]\ignorespaces
}{%
	\qed\endtrivlist
}
\makeatother

\newtheorem{theorem}{Theorem}[section]
\newtheorem{lemma}[theorem]{Lemma}

\theoremstyle{definition}
\newtheorem{definition}[theorem]{Definition}
\newtheorem{example}[theorem]{Example}
\newtheorem{remark}[theorem]{Remark}
\newtheorem{assumption}[theorem]{Assumption}

\definecolor{darkblue}{rgb}{0,0,.5}

\usepackage{hyperref}
\hypersetup{
	colorlinks=true,		
	linkcolor=darkblue,		
	citecolor=darkblue,		
	urlcolor=darkblue 		
}

\usepackage[capitalize,nameinlink]{cleveref}
\crefname{section}{section}{sections}
\crefname{subsection}{subsection}{subsections}
\Crefname{section}{Section}{Sections}
\Crefname{subsection}{Subsection}{Subsections}

\Crefname{figure}{Figure}{Figures}

\crefformat{equation}{\textup{#2(#1)#3}}
\crefrangeformat{equation}{\textup{#3(#1)#4--#5(#2)#6}}
\crefmultiformat{equation}{\textup{#2(#1)#3}}{ and \textup{#2(#1)#3}}
{, \textup{#2(#1)#3}}{, and \textup{#2(#1)#3}}
\crefrangemultiformat{equation}{\textup{#3(#1)#4--#5(#2)#6}}%
{ and \textup{#3(#1)#4--#5(#2)#6}}{, \textup{#3(#1)#4--#5(#2)#6}}{, and \textup{#3(#1)#4--#5(#2)#6}}

\Crefformat{equation}{#2Equation~\textup{(#1)}#3}
\Crefrangeformat{equation}{Equations~\textup{#3(#1)#4--#5(#2)#6}}
\Crefmultiformat{equation}{Equations~\textup{#2(#1)#3}}{ and \textup{#2(#1)#3}}
{, \textup{#2(#1)#3}}{, and \textup{#2(#1)#3}}
\Crefrangemultiformat{equation}{Equations~\textup{#3(#1)#4--#5(#2)#6}}%
{ and \textup{#3(#1)#4--#5(#2)#6}}{, \textup{#3(#1)#4--#5(#2)#6}}{, and \textup{#3(#1)#4--#5(#2)#6}}

\crefdefaultlabelformat{#2\textup{#1}#3}

\AtBeginDocument{
  \setlength{\abovedisplayskip}{6pt plus 3pt minus 3pt}
  \setlength{\belowdisplayskip}{6pt plus 3pt minus 3pt}
  \setlength{\abovedisplayshortskip}{0pt plus 3pt}
  \setlength{\belowdisplayshortskip}{3pt plus 3pt minus 3pt}
  \setlength{\jot}{2pt}
}

\allowdisplaybreaks

\DeclareMathOperator{\dom}{dom}
\DeclareMathOperator{\Image}{Im}
\DeclareMathOperator{\ri}{ri}
\DeclareMathOperator{\crit}{crit}
\DeclareMathOperator{\dist}{dist}
\DeclareMathOperator{\err}{err}

\begin{document}

\title{A Bregman proximal linearized ADMM for fractional programming with nonlinear coupling constraints\thanks{This work is supported by the National Natural Science Foundation of China (12601615 and 72471227), the Australian Research Council (DP230101749), and the China Postdoctoral Science Foundation (2026M793421).}}

\author{Pengjie Liu\thanks{School of Mathematics, Jiangsu Center for Applied Mathematics (CUMT), China University of Mining and Technology, Xuzhou 221116, China (\url{liupengjie2019@163.com}, \url{chenjh0516@163.com}, \url{shaohu@cumt.edu.cn)}.},
Jinhu Chen\footnotemark[2],  
Minh N. Dao\thanks{School of Science, RMIT University, Melbourne, VIC, 3000, Australia \url{(minh.dao@rmit.edu.au)}.},  
Hu Shao\footnotemark[2], 
and Jinbao Jian\thanks{School of Mathematical Sciences, Center for Applied Mathematics of Guangxi, Guangxi Minzu University, Nanning 530006, China \url{(jianjb@gxu.edu.cn)}.}}

\date{October 8, 2026}

\maketitle

\begin{abstract}
This paper considers a nonconvex fractional optimization problem with composite structure and nonlinear equality constraints. We reformulate the problem as a non-fractional min-max problem with corresponding optimal solutions. By linearizing the nonlinear constraint terms in the augmented Lagrangian and incorporating Bregman proximal linearization and a relaxation factor in $(0, 2)$ into the alternating direction method of multipliers (ADMM), we develop a Bregman proximal linearized ADMM with guaranteed subsequential convergence to a lifted critical point. Convergence of the entire sequence is established under the Kurdyka--{\L}ojasiewicz (KL) property. We further derive convergence rates under either a H\"olderian value proximity error bound condition or the KL property, with the corresponding exponent in $[0, 1)$. In particular, we establish superlinear convergence for exponents in $(0, 1/2)$, together with finite, linear, and sublinear convergence in the remaining cases. Numerical experiments demonstrate the effectiveness of the proposed algorithm.
\end{abstract}

\noindent{\bfseries Keywords:}
ADMM,
Bregman proximal linearization,
error bound condition,
fractional programming, 
nonlinear coupling constraints,
sublinear convergence.

\noindent{\bf Mathematics Subject Classification (MSC 2020):}
90C32,  
90C26,  
49M27,  
65K05.  

\section{Introduction}\label{sec:introdection}	

In this paper, we consider the following nonconvex composite-structured fractional optimization problem:
\begin{align}\label{Equ:1}
\min \Bigl\{ f(u) + \frac{h(x)}{g(x)}: F(x) = M u, \, x \in \mathcal{S}, \, u \in \mathbb{R}^m \Bigr\},
\end{align}
where $\mathcal{S}$ is a nonempty closed subset of $\mathbb{R}^n$, $f:\mathbb{R}^m \to \mathbb{R}$ is a continuously differentiable (possibly nonconvex) function, $h:\mathbb{R}^n \to (-\infty, +\infty]$ is a proper lower semicontinuous (l.s.c.) convex (possibly nonsmooth) function that is nonnegative and Lipschitz continuous on an open neighborhood of $\mathcal{S}$, $g:\mathbb{R}^n \to (-\infty, +\infty]$ is a proper l.s.c. convex (possibly nonsmooth) function that is positive on $\mathcal{S}$ and Lipschitz continuous on an open neighborhood of $\mathcal{S}$, $F:\mathbb{R}^n \to \mathbb{R}^q$ is a continuously differentiable mapping defined by $F(x) := (F_1(x), F_2(x), \dots, F_q(x))$, and $M \in \mathbb{R}^{q \times m}$ is a full column-rank matrix. The formulation in problem \cref{Equ:1} encompasses a wide range of optimization problems arising in engineering applications, including sparse signal recovery \cite{rahimi2019scale,sra2011optimization,studer2014stable,tao2022minimization} and image reconstruction \cite{ji2012image,repetti2014euclid}. When $g(x) \equiv 1$, problem \cref{Equ:1} reduces to a nonconvex nonlinear composite minimization problem \cite{bolte2018nonconvex,hien2024multiblock}. We present two examples to illustrate the generality of problem~\cref{Equ:1}.

\begin{example}[Sparse signal recovery \cite{rahimi2019scale}]
This model has attracted considerable attention in recent years and can be formulated as
\begin{align}\label{0127eq:2}
\min \left\{ \frac{\|x\|_1}{\|x\|_2}: Ax = b,\; \underline{x} \leq x \leq \overline{x},\; x \in \mathbb{R}^n \right\},
\end{align}
where $\underline{x}, \overline{x} \in \mathbb{R}^n$ denote the lower and upper bounds of the underlying signal, respectively.
A penalized formulation of \eqref{0127eq:2} is often considered:
\begin{align*}
\min \left\{ \lambda \frac{\|x\|_1}{\|x\|_2} + \frac{1}{2}\|Ax - b\|_2^2: \underline{x} \leq x \leq \overline{x},\; x \in \mathbb{R}^n \right\},
\end{align*}
where $\lambda > 0$ is a penalty parameter.
It is straightforward to verify that problem~\eqref{Equ:1} reduces to the above penalized formulation when
$F(x) = Ax - b$, $M = I$, $f(u) = \frac{1}{2}\|u\|_2^2$, $h(x) = \lambda \|x\|_1$,
$g(x) = \|x\|_2$, and
$\mathcal{S} = \{ x \in \mathbb{R}^n: \underline{x} \leq x \leq \overline{x} \}$.
\end{example}

\begin{example}\label{0425example:1} 
In neural network research, classification is one of the core tasks. A simple classification network can be formulated as
\begin{align*}
\min \Bigg\{  -\frac{1}{N} \sum_{i=1}^{N}\sum_{j=1}^K Y_{ij}\log(u_{ij}) &+ \gamma R(x): \\
\sigma(X^\top v_i+\hat{b}_i) =& u_i, \;x \in \mathcal{S}, \, u_i \in \mathbb{R}^{K}, \, i = 1, 2,\dots, N \Bigg\},
\end{align*}
where $N$ denotes the number of samples, $K$ denotes the number of classes and $v_i\in \mathbb{R}^d$ denotes the feature vector of the $i$-th sample.
Let $x=(x_1^\top,x_2^\top,\dots,x_K^\top,b^\top)^\top$, where $X=[x_1,x_2,\dots,x_K]\in \mathbb R^{d\times K}$ and $b\in \mathbb R^K$ is the class-wise bias vector. Denote $u=(u_1^\top,\dots,u_N^\top)^\top$, where $u_i\in \mathbb R^K$. The nonlinear constraint is $\sigma(X^\top v_i+b)=u_i$. In addition, $\sigma:\mathbb R^K\to\mathbb R^K$ is a smooth activation mapping and $Y_i\in \mathbb R^K$ is the one-hot label vector corresponding to the $i$-th sample.
We have $Y_{ij}=1$ if sample $i$ belongs to class $j$, and $Y_{ij}=0$ otherwise.
Moreover, $R(x)$ is a nonsmooth regularization term, and $\gamma$ is the regularization parameter.
In particular, with the cross-entropy loss $f(u)=-\frac{1}{N} \sum_{i=1}^{N}\sum_{j=1}^K Y_{ij}\log(u_{ij})$,
by setting $R(x) = \|x\|_1 / \|x\|_2$, defining $F_i(x) = \sigma(X^{\top} v_i+b)$, and taking $M = \boldsymbol{I}$ and $\mathcal{S}\subseteq \{x\in \mathbb{R}^n:\|x\|_2>0\}$,
the above formulation can be regarded as a special case of problem \cref{Equ:1}.
\end{example}

\subsection{Related work}

When $ f(u) \equiv 0 $ and no constraint conditions are considered, problem \cref{Equ:1} reduces to the latter minimization problem $ \min_{x\in \mathcal{S}}\{ h(x)/g(x) \} $. Common methods for such problems include the Dinkelbach method and its variants \cite{crouzeix1985algorithm,dinkelbach1967nonlinear,ibaraki1983parametric,Pang1980oarametric}. In the Dinkelbach method, at the $(k+1)$-th iteration, defining $ \theta_{k} = h(x_k)/g(x_{k}) $, one needs to solve the following auxiliary problem:
\begin{align}\label{eq:6}
x_{k+1} = \underset{x\in \mathcal{S}}{\arg\min} \left\{ h(x) - \theta_k g(x) \right\}.
\end{align}
However, solving the auxiliary problem \cref{eq:6} is not less computationally demanding than solving the original problem. To address the issue, proximal-type methods rooted in Dinkelbach's approach have been developed to tackle single-ratio fractional programming problems \cite{boct2017proximal,boct2021extrapolated,zhang2022first}. Bo\c{t} et al. \cite{boct2017proximal} proposed a proximal gradient-type method for fractional programming, where the numerator is proper, l.s.c., and convex, and the denominator is smooth (either concave or convex). In other words, both the numerator and the denominator demand favorable mathematical properties of the involved functions. Bo\c{t} et al. \cite{boct2021extrapolated} further extended the results of \cite{boct2017proximal} by relaxing both the numerator and denominator to possibly nonconvex and nonsmooth cases and introducing extrapolation steps in each iteration. Zhang et al. \cite{zhang2022first} further relaxed the convexity assumption on the numerator and developed related methods with monotone and nonmonotone line searches. However, when $ f(u) \not\equiv 0 $, the Dinkelbach method becomes inapplicable. In particular, when $ M = I $ and $ F(x) = x $, problem \cref{Equ:1} can be transformed into  
\begin{align}\label{Eq:n1}
\min\left\{ f(x) + \frac{h(x)}{g(x)}: \;x\in \mathcal{S} \right\}.
\end{align}
By applying the Dinkelbach method, one obtains  
$ x_{k+1} ={\arg\min}_{x\in \mathcal{S}} \{ h(x) + g(x)f(x) - \theta_k g(x) \} $,  
where $ \theta_k = h(x_k)/g(x_k) $. This iterative scheme is generally impractical, as solving problem \cref{Eq:n1} in each iteration is exceedingly challenging and expensive.  
Motivated by the quadratic transform \cite{Benson2004on, boct2023inertial}, 
which introduces an auxiliary variable and provides an equivalent non-fractional 
representation of a fractional term, Zhou et al.\ \cite{zhou2025min} 
proposed an equivalent non-fractional min--max reformulation of 
problem~\cref{Eq:n1}, i.e., $\min_{\substack{x \in \Omega \cap \mathcal{C}}} \max_{c \in \mathbb{R}} 
\left\{ 2c h(x) - c^2 h(x) g(x) + f(x) \right\}$,
where $ \mathcal{C} \subseteq \mathbb{R}^n $ is closed and $ \mathcal{C} \cap \Omega \neq \emptyset $ (with $ \Omega := \{x \in \mathbb{R}^n: g(x) \neq 0\} $). 
To address the nonsmooth and nonconvex–concave min–max problem, they further proposed an alternating maximization proximal descent algorithm (AMPDA).

When the fractional structure is absent, the problem reduces to a structured optimization formulation with nonlinear coupling constraints, for which the 
alternating direction method of multipliers (ADMM) is a popular solution method. Further discussion 
on ADMM can be found in the surveys \cite{boyd2011distributed, Han2022JORSC}.
Bolte et al.\ \cite{bolte2018nonconvex} proposed the information region within 
the augmented Lagrangian (AL) framework for problems with nonlinear coupling 
constraints; however, obtaining the uniform regularity parameter remains a 
nontrivial task.  
Cohen et al. \cite{cohen2022dynamic} developed a proximal linearized ADMM incorporating a backtracking procedure, in which function values are repeatedly evaluated to ensure the descent property. Hallak and Teboulle \cite{hallak2023adaptive} proposed an adaptive 
Lagrangian minimization scheme involving an update mechanism in which feasibility-penalizing terms are adjusted only when the feasibility gap exceeds predefined thresholds, with convergence to an $\varepsilon$-critical point. 
Zhu et al. \cite{zhu2024first} developed a flexible first-order primal–dual method via linearization and Bregman distances to approximate primal subproblems.

\subsection{Motivation}

Problem~\cref{Equ:1} is a nonconvex composite-structured fractional program with nonlinear coupling constraints. To the best of our knowledge, as noted earlier, such problems frequently arise in engineering applications, yet the development of effective solution methods remains relatively limited. 
If the fractional term in the objective function of problem~\cref{Equ:1} is 
treated as a standalone function, some structured methods are available for 
solving it; however, they impose more stringent assumptions. 
Meanwhile, in the existing nonconvex splitting methods, we also observe two common issues.  
(i) For ADMM-type splitting methods, the multiplier update typically adopts a 
fixed step size $ r = 1 $, e.g., 
\cite{ames2016alternating,banert2021fixing,boct2019admm,bourkhissi2025convergence}, 
which limits algorithmic flexibility. Subsequent studies 
\cite{2025Bai,boct2020proximal,yashtini2022convergence} extended this range to 
$(0,2)$ for nonconvex problems without fractional structures under linear 
constraints, but the composite-structured fractional optimization problem with 
nonlinear constraints still requires further investigation. 
(ii) The Kurdyka--{\L}ojasiewicz (KL) property \cite{2010Attouch} is a well-known and effective tool for analyzing the global convergence and convergence 
rate of nonconvex optimization methods, e.g., 
\cite{bolte2018nonconvex,boct2019admm,cohen2022dynamic,zhou2025min}, as it 
characterizes an inequality relationship between the function residual and the 
distance to its subdifferential. Zhu et al.\ \cite{zhu2024first} introduced the H\"olderian value proximity error bound (HVP-EB) condition, which establishes a similar inequality between the function residual and the norm of the variable residual for nonlinear constrained composite optimization problems without fractional structure. Notably, Zhu et al.\ \cite{zhu2024first} only established a linear 
convergence rate for exponents $ \theta \in (0,\frac{1}{2}) $. Moreover, the KL property implies the HVP-EB condition, meaning that the KL property can be viewed as a special case of the HVP-EB condition. This also indicates that the HVP-EB condition is a natural setting and is therefore more general than the KL property.

\subsection{Contributions} 

Motivated by the aforementioned considerations, we propose a Bregman proximal linearized ADMM (BPLADMM) with a relaxed multiplier update to provide a flexible and theoretically reliable iterative scheme for solving problem~\cref{Equ:1}. In summary, the main contributions of this work are twofold.
\begin{enumerate} 
\item 	For the proposed BPLADMM designed for problem~\cref{Equ:1}, we follow the idea in \cite{zhou2025min} and eliminate the fractional term in the objective function by introducing a new variable. This differs from some existing methods 
for fractional programming, such as the Dinkelbach method 
\cite{crouzeix1985algorithm} and its variants 
\cite{boct2017proximal,li2022proximal}.  
In addition, we introduce two notions of criticality following Bo{\c{t}} et al. \cite{boct2023inertial}, where 
their method requires the entire fractional term to satisfy certain assumptions in order to establish global convergence that are not required in our proposed BPLADMM.  Moreover, we incorporate the multiplier step size $ r \in (0,2) $ into the BPLADMM framework for solving problem~\cref{Equ:1}, making this the first work to achieve such a setting in composite-structured fractional programming with nonlinear coupling constraints.

\item Under standard assumptions, we establish that the sequence generated by BPLADMM converges subsequentially to a lifted critical point of the problem. Moreover, under mild conditions—including the use of the HVP-EB condition with exponent $\theta \in [0,1)$, which is more general than the KL property—we establish the 
global convergence of the entire sequence generated by BPLADMM and derive convergence rates for both the Lyapunov function sequence and the iterative sequence that improve upon those in \cite{zhu2024first}. Specifically, if $\theta = 0$, both the Lyapunov function and the iterates converge in a finite number of iterations; if $\theta \in (0, \tfrac{1}{2})$, 
the convergence is superlinear; if $\theta = \tfrac{1}{2}$, the convergence is linear; and if $\theta \in (\tfrac{1}{2}, 1)$, the convergence is sublinear.
\end{enumerate}

\subsection{Organization} 

The remainder of this paper is organized as follows. \Cref{sec:preliminaries} introduces basic notations and the necessary preliminary results. \Cref{sec:bpladmm} elaborates the iterative steps and theoretical results of the proposed method. Numerical evaluations are provided in \Cref{sec:numerical}, and conclusions are drawn in \Cref{sec:conclusions}.

\section{Preliminaries}\label{sec:preliminaries}

In this section, we present the foundational notations and preliminary results that serve as the theoretical basis for the next section.

The symbol $\|\cdot\|$ denotes the standard Euclidean norm for both vectors 
and matrices. The standard inner product associated with the $n$-dimensional 
Euclidean space $\mathbb{R}^n$ is defined as $\langle x, y \rangle := x^\top y$ 
for any vectors $x, y \in \mathbb{R}^n$. Given a nonempty subset $\mathcal{S}$ of 
$\mathbb{R}^n$ and a point $x \in \mathbb{R}^n$, the distance from $x$ to 
the set $\mathcal{S}$ is defined as $\dist(x, \mathcal{S}) := \inf\{\|y - x\|: y \in \mathcal{S}\}$, 
with $\dist(x, \emptyset) := +\infty$. 

For a function $f: \mathbb{R}^n \to (-\infty, +\infty]$, the domain is 
$\dom f := \{x \in \mathbb{R}^n: f(x) < +\infty\}$. A function $f$ 
is proper if $\dom f \neq \emptyset$ and $f(x) > -\infty$ for all 
$x \in \mathbb{R}^n$.   
The function $f$ is l.s.c. at $x \in \mathbb{R}^n$ 
if $f(x) \leq \liminf_{z \to x} f(z)$, and $f$ is called l.s.c. if this 
condition holds for all $x\in \mathbb{R}^n$. For a differentiable function 
$f: \mathbb{R}^n \to \mathbb{R}$, $\nabla f(x) \in \mathbb{R}^n$ denotes its 
gradient at $x$. For a differentiable mapping $F: \mathbb{R}^n \to \mathbb{R}^q$, 
$\nabla F(x) \in \mathbb{R}^{q \times n}$ denotes its Jacobian matrix at $x$. 
For a proper l.s.c. function $f: \mathbb{R}^n \to (-\infty, +\infty]$, the 
Fr\'echet subdifferential \cite[Definition 1.1]{Kruger2003on} at $x \in \dom f$ 
is defined as  
\begin{align*}
\widehat{\partial}f(x) := \left\{ d \in \mathbb{R}^n: \liminf_{\substack{y \to x \\ y \neq x}} 
\frac{f(y) - f(x) - \langle d, y - x \rangle}{\|y - x\|} \geq 0 \right\},
\end{align*}  
while the limiting subdifferential \cite[Definition 8.3]{Rockafellar2004var} at $x \in \dom f$ 
is defined as $\partial f(x) := 
\left\{ d\in \mathbb{R}^n: \exists \,x_k \xrightarrow{f} x,\ d_k \to d \text{ with } d_k \in \widehat\partial {f}(x_k)  \right\},$
and the horizon subdifferential \cite[Definition 8.3]{Rockafellar2004var} of $f$ at $x \in \dom f$ 
is defined as  
\begin{align*}
\partial^\infty f(x) := \left\{ d\in \mathbb{R}^n: \exists\, x_k \xrightarrow{f} x,\ 
\lambda_k \downarrow 0,\ \lambda_k d_k \to d \ \text{with}\ d_k \in \widehat{\partial}f(x_k) \right\},
\end{align*}  
where the notation $x_k \xrightarrow{f} x$ means $x_k \to x$ with $f(x_k) \to f(x)$. 
Both subdifferentials are empty for $x \notin \dom f$, 
i.e., $\widehat{\partial}f(x) = \partial f(x) = \emptyset$. 
If the function $f$ is finite at $x$, the subdifferentials $\widehat{\partial}f(x)$ and ${\partial}f(x)$ satisfy $\widehat{\partial}f(x)\subseteq {\partial}f(x)$ \cite[Theorem 8.6]{Rockafellar2004var}.
In the convex case, these 
subdifferentials reduce to the classical convex subdifferential:  $\widehat{\partial}f(x) = \partial f(x) = \left\{ d \in \mathbb{R}^n: f(y) \geq f(x) + \langle d, y - x \rangle,  \text{for all} \;y \in \mathbb{R}^n \right\}$ \cite[Proposition 8.12]{Rockafellar2004var}.  
For functions $f$ strictly differentiable at $x$, the Fr\'echet and limiting subdifferentials collapse 
to $\widehat{\partial}f(x) = \partial f(x) = \{\nabla f(x)\}$ \cite[Theorem 9.18]{Rockafellar2004var}. 
A point $x$ is termed critical if $0 \in \partial f(x)$ \cite[Theorem 10.1]{Rockafellar2004var}. 
We say that $f$ is regular at $x$ iff $\widehat{\partial} f(x) = \partial f(x)$. The function $f$ is said to be regular on a set $\mathcal{S}$ if it is regular at every point $x \in \mathcal{S}$.
A nonempty set $\mathcal{S}\subseteq \mathbb{R}^n$ is said to be regular at a point $x\in \mathcal{S}$ if the indicator function $\delta_{\mathcal{S}}$ is regular at $x$. The set $\mathcal{S}$ is called regular if it is regular at every point in $\mathcal{S}$.  

Let $\varphi: \mathbb{R}^n \to (-\infty, +\infty]$ be a proper function. Its conjugate function $\varphi^*: \mathbb{R}^n \to (-\infty, +\infty]$ is defined for $y \in \mathbb{R}^n$ by
$\varphi^*(y) := \sup\{\langle x, y \rangle - \varphi(x): x \in \mathbb{R}^n\}$. The function $\varphi^*$ is l.s.c. and convex. Furthermore, if $\varphi$ is l.s.c. and convex, then $\varphi^*$ is proper and, by \cite[Proposition 11.3]{Rockafellar2004var}, for any $x, y \in \mathbb{R}^n$,   
$\langle x, y \rangle = \varphi(x) + \varphi^*(y) \iff y \in \partial \varphi(x) \iff x \in \partial \varphi^*(y).$ 
Consequently, the Fenchel-Young inequality
$\varphi(x) + \varphi^*(y) \geq \langle x, y \rangle$ 
holds for all $x, y \in \mathbb{R}^n$, with equality iff $y \in \partial \varphi(x)$. 
Given a continuously differentiable convex function $\phi: \mathbb{R}^n \to \mathbb{R}$, the Bregman distance \cite{Bregman1967} associated with $\phi$ is defined as $D_{\phi}(x_1, x_2) = \phi(x_1) - \phi(x_2) - \langle \nabla \phi(x_2), x_1 - x_2 \rangle$, for all $x_1, x_2 \in \mathbb{R}^n$.

\begin{lemma}[{\cite{Wang2014arXiv}}] \label{1122lem:1}
Let $\phi: \mathbb{R}^n \to \mathbb{R}$ be a differentiable convex function, and 
let $D_{\phi}$ denote the Bregman distance associated with $\phi$. 
\begin{enumerate}
\item For all $x_1, x_2 \in \mathbb{R}^n$, the Bregman distance satisfies 
$D_{\phi}(x_1, x_2) \geq 0$; moreover, if $\phi$ is strictly convex, then $D_{\phi}(x_1, x_2) = 0$ iff $x_1 = x_2$.
\item For each fixed $x \in \mathbb{R}^n$, the function 
$D_{\phi}(\cdot, x)$ is convex.
\item If $\phi$ is $\alpha$-strongly convex, then for all 
$x_1, x_2 \in \mathbb{R}^n$, $D_{\phi}(x_1, x_2) \geq \frac{\alpha}{2}\|x_1 - x_2\|^2$. 
\item If $\nabla \phi$ is $L_{\nabla\phi}$-Lipschitz continuous, then for all 
$x_1, x_2 \in \mathbb{R}^n$,
$D_{\phi}(x_1, x_2) \leq \frac{L_{\nabla\phi}}{2}\|x_1 - x_2\|^2$. 
\end{enumerate} 
\end{lemma}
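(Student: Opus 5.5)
The four items of \cref{1122lem:1} all follow from the definition $D_{\phi}(x_1,x_2)=\phi(x_1)-\phi(x_2)-\langle\nabla\phi(x_2),x_1-x_2\rangle$. In each case I would rewrite $D_\phi$ as the gap between $\phi$ and its first-order Taylor expansion at $x_2$, and then apply the standard characterization of convexity, strict convexity, strong convexity or $L$-smoothness of a differentiable function. I take the items in order.

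For item 1, a differentiable convex function satisfies the gradient inequality $\phi(x_1)\ge\phi(x_2)+\langle\nabla\phi(x_2),x_1-x_2\rangle$, which is exactly $D_\phi(x_1,x_2)\ge 0$. If $x_1=x_2$, then $D_\phi=0$ trivially. For the converse under strict convexity, suppose $x_1\neq x_2$ and $D_\phi(x_1,x_2)=0$, and set $z=\tfrac12(x_1+x_2)$. Strict convexity gives $\phi(z)<\tfrac12\phi(x_1)+\tfrac12\phi(x_2)$. The gradient inequality at $x_2$ gives $\phi(z)\ge\phi(x_2)+\tfrac12\langle\nabla\phi(x_2),x_1-x_2\rangle$. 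Since $D_\phi(x_1,x_2)=0$, the right-hand side equals $\tfrac12\phi(x_1)+\tfrac12\phi(x_2)$, which contradicts the strict inequality. This small midpoint argument is the only step that needs any thought.

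For item 2, fix $x$. Then $D_\phi(\cdot,x)=\phi(\cdot)-\langle\nabla\phi(x),\cdot\rangle+c$ for a constant $c$, which is a convex function plus an affine one, hence convex.

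For item 3, $\alpha$-strong convexity of a differentiable $\phi$ is equivalent to $\phi(x_1)\ge\phi(x_2)+\langle\nabla\phi(x_2),x_1-x_2\rangle+\tfrac{\alpha}{2}\|x_1-x_2\|^2$. If one prefers to start from the definition that $\phi-\tfrac{\alpha}{2}\|\cdot\|^2$ is convex, apply the gradient inequality to that function and expand the squares. Either way, rearranging gives $D_\phi(x_1,x_2)\ge\tfrac{\alpha}{2}\|x_1-x_2\|^2$.

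For item 4, use the descent lemma. Write
\begin{align*}
\phi(x_1)-\phi(x_2)-\langle\nabla\phi(x_2),x_1-x_2\rangle=\int_0^1\langle\nabla\phi(x_2+t(x_1-x_2))-\nabla\phi(x_2),x_1-x_2\rangle\,dt .
\end{align*}
Bound the integrand by Cauchy--Schwarz and the Lipschitz property: it is at most $L_{\nabla\phi}\,t\,\|x_1-x_2\|^2$. Integrating over $t\in[0,1]$ gives $\tfrac{L_{\nabla\phi}}{2}\|x_1-x_2\|^2$.
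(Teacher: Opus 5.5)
Your proof is correct: each item follows as you argue, from the gradient inequality (with your midpoint argument settling the strict-convexity converse), from $D_\phi(\cdot,x)$ being $\phi$ plus an affine function, from the first-order characterization of $\alpha$-strong convexity, and from the integral form of the descent lemma. The paper gives no proof of its own and simply cites the lemma from the literature, and your arguments are the standard ones, so there is nothing further to compare.
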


Now, we introduce some limiting subdifferential calculus rules \cite{boct2023inertial}. 

\begin{lemma}\label{lem:new1}
Let $\varphi_1, \varphi_2: \mathbb{R}^n \to (-\infty, +\infty]$ be proper l.s.c. functions, 
and let $x^* \in \mathbb{R}^n$.  
\begin{enumerate}

\item For any $k_1 \geq 0$, one has $\partial (k_1 \varphi_1)(x^*) = k_1 \partial \varphi_1(x^*)$. 
For any $k_2 < 0$, if $\varphi_1$ is Lipschitz continuous around $x^*$ and 
$\widehat{\partial}\varphi_1$ is nonempty-valued in a neighborhood of $x^*$, then 
$\partial (k_2 \varphi_1)(x^*) \subseteq k_2 \partial \varphi_1(x^*)$. 

\item Suppose that $\varphi_1$ is finite-valued at $x^*$ and $\varphi_2$ is locally 
Lipschitz around $x^*$. Then
$\partial(\varphi_1 + \varphi_2)(x^*) 
\subseteq \partial \varphi_1(x^*) + \partial \varphi_2(x^*)$, 
with equality if both $\varphi_1$ and $\varphi_2$ are regular at $x^*$; in this case, 
$\varphi_1 + \varphi_2$ is also regular at $x^*$. Moreover, if $\varphi_2$ is 
continuously differentiable at $x^*$, then 
$\partial(\varphi_1 + \varphi_2)(x^*) 
= \partial \varphi_1(x^*) + \nabla \varphi_2(x^*)$. 

\item Assume that $\varphi_1$ and $\varphi_2$ are Lipschitz continuous around $x^*$, 
and that $\widehat{\partial}\varphi_2$ is nonempty-valued in a neighborhood of $x^*$. 
Then
\begin{align*}
\partial\!\left(\frac{\varphi_1}{\varphi_2}\right)(x^*) 
\subseteq 
\frac{\partial\!\left(\varphi_2(x^*) \varphi_1\right)(x^*) 
- \varphi_1(x^*)\, \partial \varphi_2(x^*)}
{[\varphi_2(x^*)]^2}.
\end{align*}
If $\varphi_2$ is continuously differentiable at $x^*$, then
\begin{align*}
\partial\!\left(\frac{\varphi_1}{\varphi_2}\right)(x^*) 
=
\frac{\partial\!\left(\varphi_2(x^*) \varphi_1\right)(x^*) 
- \varphi_1(x^*)\, \nabla \varphi_2(x^*)}
{[\varphi_2(x^*)]^2},
\end{align*}
and consequently, $\varphi_1/\varphi_2$ is regular at $x^*$ iff the function $\varphi_2(x^*)\varphi_1(x)$ is regular at $x^*$.
\end{enumerate} 
\end{lemma}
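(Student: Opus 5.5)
The plan is to treat the three items separately. In each case I would work first at the level of Fréchet subgradients and then pass to limits using the definition of the limiting subdifferential.

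\emph{Item 1.} For $k_1>0$ the Fréchet quotient for $k_1\varphi_1$ is exactly $k_1$ times the quotient for $\varphi_1$. Hence $\widehat\partial(k_1\varphi_1)=k_1\widehat\partial\varphi_1$. Since $k_1\varphi_1(x_k)\to k_1\varphi_1(x^*)$ iff $\varphi_1(x_k)\to\varphi_1(x^*)$, the equality passes to limits. For $k_1=0$ the function is identically zero wherever $\varphi_1$ is finite, and the identity $\partial(0\cdot\varphi_1)=\{0\}$ follows directly under the convention $0\cdot\partial\varphi_1(x^*)=\{0\}$ when that set is nonempty.

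For $k_2<0$, write $k_2\varphi_1=-|k_2|\varphi_1$, so by the positive case it suffices to treat $-\varphi_1$. Take $d_k\in\widehat\partial(-\varphi_1)(x_k)$ with $x_k\to x^*$, $d_k\to d$. By hypothesis there is some $e_k\in\widehat\partial\varphi_1(x_k)$. Adding the two Fréchet inequalities shows that $d_k+e_k$ is a Fréchet subgradient of the zero function at $x_k$, so $d_k+e_k=0$. Thus $-d_k\in\widehat\partial\varphi_1(x_k)$. The Lipschitz property makes the $f$-attentive convergence automatic, and taking limits gives $d\in-\partial\varphi_1(x^*)$. Scaling by $|k_2|$ yields the claim.

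\emph{Item 2.} The inclusion is the standard limiting sum rule. I would cite \cite[Exercise 10.10]{Rockafellar2004var}, whose qualification condition $\partial^\infty\varphi_2(x^*)=\{0\}$ holds because $\varphi_2$ is locally Lipschitz. For equality under regularity, the Fréchet sum rule $\widehat\partial\varphi_1+\widehat\partial\varphi_2\subseteq\widehat\partial(\varphi_1+\varphi_2)$ holds trivially by adding the liminf inequalities. Combining it with $\widehat\partial\subseteq\partial$ gives
\[
\partial\varphi_1+\partial\varphi_2=\widehat\partial\varphi_1+\widehat\partial\varphi_2\subseteq\widehat\partial(\varphi_1+\varphi_2)\subseteq\partial(\varphi_1+\varphi_2)\subseteq\partial\varphi_1+\partial\varphi_2,
\]
so all these sets coincide, which is both the equality and the regularity of the sum. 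A $C^1$ function is regular with $\partial\varphi_2=\{\nabla\varphi_2\}$, so the last statement is a special case of the equality.

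\emph{Item 3.} This is the main obstacle. Near $x^*$ I would write
\[
\frac{\varphi_1}{\varphi_2}(x)-\frac{\varphi_1(x^*)}{\varphi_2(x^*)}=\frac{\varphi_2(x^*)\varphi_1(x)-\varphi_1(x^*)\varphi_2(x)}{\varphi_2(x)\varphi_2(x^*)}.
\]
The denominator equals $\varphi_2(x^*)^2+O(\|x-x^*\|)$. By Lipschitz continuity of $\varphi_1,\varphi_2$, the function $\varphi_1/\varphi_2$ coincides to first order with
\[
\psi(x):=\frac{\varphi_2(x^*)\varphi_1(x)-\varphi_1(x^*)\varphi_2(x)}{\varphi_2(x^*)^2}.
\]
Passing from this first-order agreement to subdifferentials needs care: at $x^*$ itself the Fréchet subdifferentials of the two functions agree, but the limiting subdifferential uses nearby points $x_k$. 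So I would redo the expansion around each $x_k$, with coefficients $\varphi_1(x_k),\varphi_2(x_k)$. The Lipschitz constants give uniform control of the remainder terms, so these coefficients converge to those at $x^*$. Then I pass to the limit and apply item 2 and item 1 (with $k_2=-\varphi_1(x^*)/\varphi_2(x^*)^2$, using the nonemptiness of $\widehat\partial\varphi_2$) to split $\partial\psi(x^*)$. This yields the stated inclusion, with $\partial(\varphi_2(x^*)\varphi_1)(x^*)$ left unsplit.

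When $\varphi_2$ is $C^1$, the term $-\varphi_1(x^*)\varphi_2/\varphi_2(x^*)^2$ is $C^1$, so item 2 gives equality. For the regularity statement, the first-order agreement with $\psi$ transfers regularity in both directions. It remains to check that $\psi$ is regular iff $\varphi_2(x^*)\varphi_1$ is regular, which holds because they differ by a $C^1$ function.
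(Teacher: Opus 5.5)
The paper gives no proof of this lemma; it imports it from \cite{boct2023inertial}, so your plan is judged on its own. Item~1 and the inclusion and regular-equality parts of item~2 are fine. For $k_1=0$ the identity only holds under a convention. Under the reading $0\cdot\varphi_1=\delta_{\dom\varphi_1}$ it fails, e.g.\ $\varphi_1=\delta_{[0,+\infty)}$ at $0$, so say explicitly that you set this case aside rather than claim it follows directly.

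\emph{Item~2, last claim.} This is not a special case of your equality. That equality needs \emph{both} summands regular, whereas in the $C^1$ statement $\varphi_1$ is arbitrary. The correct argument applies the inclusion twice. First, $\partial(\varphi_1+\varphi_2)(x^*)\subseteq\partial\varphi_1(x^*)+\nabla\varphi_2(x^*)$. Second, write $\varphi_1=(\varphi_1+\varphi_2)+(-\varphi_2)$, where $-\varphi_2$ is locally Lipschitz with $\partial(-\varphi_2)(x^*)=\{-\nabla\varphi_2(x^*)\}$; this gives $\partial\varphi_1(x^*)\subseteq\partial(\varphi_1+\varphi_2)(x^*)-\nabla\varphi_2(x^*)$. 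This matters, because item~3 uses the rule with the non-regular summand $\varphi_1/\varphi_2(x^*)$.

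\emph{Item~3, the limit passage.} You only argue $\partial(\varphi_1/\varphi_2)(x^*)\subseteq\partial\psi(x^*)$, and even that step is not justified. After expanding at $x_k$ you get $d_k\in\widehat\partial\psi_k(x_k)$ for a \emph{different} function $\psi_k$ for each $k$. Closedness of the limiting subdifferential applies to a fixed function, and $\psi_k-\psi$ is merely Lipschitz with vanishing modulus. You would need an $\varepsilon$-subgradient characterization, or to split at $x_k$ with the limiting sum rule before taking limits.

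\emph{Item~3, the missing reverse inclusion.} The $C^1$ equality and the regularity equivalence need $\partial\psi(x^*)\subseteq\partial(\varphi_1/\varphi_2)(x^*)$, which you never address. Also, ``first-order agreement transfers regularity'' contradicts your own correct remark that agreement at $x^*$ controls only $\widehat\partial$.

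\emph{The missing idea.} The remainder is better than $o(\|x-x^*\|)$. With $N(x):=\varphi_2(x^*)\varphi_1(x)-\varphi_1(x^*)\varphi_2(x)$,
\[
\frac{\varphi_1(x)}{\varphi_2(x)}-\frac{\varphi_1(x^*)}{\varphi_2(x^*)}-\psi(x)=\frac{N(x)\bigl(\varphi_2(x^*)-\varphi_2(x)\bigr)}{\varphi_2(x)\,\varphi_2(x^*)^2}=:\rho(x).
\]
This is a product of two Lipschitz functions vanishing at $x^*$ and a bounded Lipschitz one. Hence $|\rho(x)-\rho(z)|\le C(\|x-x^*\|+\|z-x^*\|)\|x-z\|$ near $x^*$, so $\rho$ is strictly differentiable at $x^*$ with $\nabla\rho(x^*)=0$. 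This is the same device the paper uses in \eqref{1030eq2}. The corrected smooth-sum rule, applied with $\pm\rho$, then gives both
\[
\partial(\varphi_1/\varphi_2)(x^*)=\partial\psi(x^*)\quad\text{and}\quad\widehat\partial(\varphi_1/\varphi_2)(x^*)=\widehat\partial\psi(x^*)
\]
at once, with no expansion at the $x_k$. The inclusion, the $C^1$ equality and the regularity equivalence then follow from items~1--2 as you describe.
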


Now, following \cite [Chapter~10]{Rockafellar2004var}, we introduce the first-order optimality conditions for problem~\cref{Equ:1}.

\begin{definition}[First-order optimality conditions]\label{lemma:1} 
For problem~\eqref{Equ:1}, we say that $(x^*,u^*,\lambda^*) \in \mathbb{R}^n\times \mathbb{R}^m\times \mathbb{R}^q$ is
\begin{enumerate}
\item a \emph{critical point} if $\nabla f(u^*) = M^\top \lambda^*$, $ F(x^*) = M u^*$, and $-\nabla F(x^*)^\top \lambda^* \in \partial \left({h }/{g}+\delta_\mathcal{S}\right)(x^*)$;

\item a \emph{lifted critical point} if 
\begin{align}\label{eq:3}
\left\{
\begin{aligned}
0 &\in \partial\delta_\mathcal{S}(x^*) 
+ \frac{\partial \left(g(x^*) h\right)(x^*) - h(x^*)\partial g(x^*)}{[g(x^*)]^2} 
+ \nabla F(x^*)^\top \lambda^*, \\
0 &= \nabla f(u^*) - M^\top \lambda^*,\,
0 = F(x^*) - M u^*.
\end{aligned}
\right.
\end{align}
\end{enumerate}
\end{definition}
Any local minimizer $(x^*,u^*)$ of problem \eqref{Equ:1} admits $\lambda^*\in \mathbb{R}^q$ such that $(x^*,u^*,\lambda^*)$ is a critical point.
Conversely, let $(x^*,u^*,\lambda^*)$ be a critical point satisfying  
$x^*\in \mathcal{S}\cap\ri(\dom g)\cap\ri(\dom h)$.  
If $\delta_\mathcal{S}$ is regular at $x^*$, 
Lemma~\ref{lem:new1}~(ii) and (iii) imply that $(x^*,u^*,\lambda^*)$ is also a lifted critical point of problem~\eqref{Equ:1}.  
The critical points and lifted critical points are equivalent in either of two cases:  
(i) both $g$ and $h$ are continuously differentiable;  
(ii) $g$ is continuously differentiable, and $\delta_\mathcal{S}$ is regular at $x^*$.

\begin{lemma}[{\cite{Pham2024bregman}}]\label{0611lem:2}
Let $L$ be a symmetric matrix in $\mathbb{R}^{d\times d}$ and let $M$ be a matrix in $\mathbb{R}^{p\times d}$. For all $x \in \mathbb{R}^d$,
$
\gamma (x^\top L x) \leq \|Lx\|^2$,
where $\gamma = \min\{|\sigma |: \sigma $ is a nonzero eigenvalue of $L\}$. Consequently, for all $z \in \Image(M)$,
$
\sqrt{\sigma_*(M^\top M)} \|z\| \leq \|M^\top z\|$,
where $\sigma_*(M^\top M)$ is the smallest strictly positive eigenvalue of $M^\top M$.
\end{lemma}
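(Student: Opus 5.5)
We split the statement into its two parts and treat them in order. For the first inequality $\gamma (x^\top L x) \le \|Lx\|^2$, we use the spectral theorem: since $L$ is symmetric, write $L = Q\Lambda Q^\top$ with $Q$ orthogonal and $\Lambda = \mathrm{diag}(\sigma_1,\dots,\sigma_d)$. Setting $y := Q^\top x$, we obtain $x^\top L x = \sum_i \sigma_i y_i^2$ and $\|Lx\|^2 = \|\Lambda y\|^2 = \sum_i \sigma_i^2 y_i^2$, using the orthogonal invariance of the Euclidean norm.

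The inequality then reduces to a termwise comparison. Indices with $\sigma_i = 0$ contribute zero to both sides. For $\sigma_i \neq 0$ we have $|\sigma_i| \ge \gamma$, so
\[
\gamma \sigma_i y_i^2 \le \gamma |\sigma_i| y_i^2 \le \sigma_i^2 y_i^2 .
\]
Summing over $i$ gives the claim. If $L = 0$ there are no nonzero eigenvalues; we adopt the convention that $\gamma$ is then arbitrary, or treat this case as trivial since both sides vanish.

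For the consequence, apply the first part with $L := MM^\top \in \mathbb{R}^{p\times p}$, which is symmetric and positive semidefinite. For $z \in \mathbb{R}^p$ we have $z^\top L z = \|M^\top z\|^2$. The nonzero eigenvalues of $MM^\top$ coincide with those of $M^\top M$, as seen from the SVD or from the identity $M^\top M v = \sigma v \Rightarrow MM^\top (Mv) = \sigma Mv$ with $Mv \neq 0$ when $\sigma \neq 0$. Since these eigenvalues are positive, $\gamma = \sigma_*(M^\top M)$.

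The main subtlety is that the first part yields $\sigma_* \|M^\top z\|^2 \le \|MM^\top z\|^2$, which is not directly the desired bound $\sigma_* \|z\|^2 \le \|M^\top z\|^2$. Here we use $z \in \Image(M)$. Since $\Image(M) = \Image(MM^\top) = \ker(MM^\top)^\perp$, in the eigenbasis of $MM^\top$ the vector $z$ has no components along zero eigenvalues. Writing $y = Q^\top z$, this gives $y_i = 0$ whenever $\sigma_i = 0$. Hence
\[
\|M^\top z\|^2 = \sum_{\sigma_i \neq 0} \sigma_i y_i^2 \ge \sigma_* \sum_i y_i^2 = \sigma_* \|z\|^2,
\]
and taking square roots gives the result. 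This is the same eigen-decomposition argument applied to $L = MM^\top$, using the restriction to its range. Identifying $\Image(M)$ with the span of eigenvectors of nonzero eigenvalues is the only step requiring care, and it follows from $\ker(MM^\top) = \ker(M^\top)$ together with $\Image(M) = \ker(M^\top)^\perp$.
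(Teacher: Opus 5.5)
Your proof is correct. The paper gives no argument for this lemma and simply quotes it from \cite{Pham2024bregman}, so the only meaningful comparison is with how the ``consequently'' is meant to follow from the first inequality.

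Your first part is the standard argument: spectral decomposition plus the termwise bound $\gamma\sigma_i y_i^2\le\gamma|\sigma_i|y_i^2\le\sigma_i^2y_i^2$. In the second part you took a detour. You applied the first inequality with $L=MM^\top$ and saw that it gives the wrong inequality $\sigma_*\|M^\top z\|^2\le\|MM^\top z\|^2$. You then redid the spectral argument on $\ker(MM^\top)^\perp=\Image(M)$.

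That is valid, but the second claim is then not actually derived from the first. It also costs you two auxiliary facts: that $MM^\top$ and $M^\top M$ have the same nonzero eigenvalues, and that $\Image(M)$ is spanned by the eigenvectors of $MM^\top$ with nonzero eigenvalues.

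The intended deduction avoids both facts. Write $z=Mw$ and apply the first part with $L:=M^\top M$ and $x:=w$. Since $M^\top M$ is positive semidefinite, $\gamma=\sigma_*(M^\top M)$, and
\[
\sigma_*(M^\top M)\,\|z\|^2=\gamma\, w^\top M^\top M w\le \|M^\top M w\|^2=\|M^\top z\|^2 ,
\]
which gives the claim after taking square roots.

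Your version is self-contained and makes explicit why the restriction $z\in\Image(M)$ is needed. The substitution $z=Mw$ builds that restriction in automatically, yields the result in one line, and shows why the lemma states it as a consequence.
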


\begin{lemma}[{\cite{zhou2025min}}]\label{lem:2}
Let $\varphi_l, \varphi_c, \varphi_r: \mathbb{R}^n \to (-\infty, +\infty]$ be functions, and define
$\Phi(x) = \varphi_l(x)\big(\varphi_c(x)+\varphi_r(x)\big)$
for all $x \in \dom(\varphi_l) \cap \dom(\varphi_c) \cap \dom(\varphi_r)$, and $\Phi(x) = +\infty$ otherwise.
Let $\bar{x} \in \dom(\varphi_l) \cap \dom(\varphi_c) \cap \dom(\varphi_r)$, and suppose that 
\begin{enumerate}
    \item $\varphi_l$ is nonnegative and locally Lipschitz continuous around $\bar{x}$;
    \item $\varphi_c$ is convex and $\partial \varphi_c(\bar{x}) \neq \emptyset$;
    \item $\varphi_r$ is continuous at $\bar{x}$.  
\end{enumerate}
It then holds that
\(\widehat\partial \Phi(\bar{x}) \supseteq \widehat\partial \Psi(\bar{x}) + \varphi_l(\bar{x})\partial \varphi_c(\bar{x}),\)
where the extended-real-valued mapping \(\Psi\colon\mathbb R^n\to(-\infty,+\infty]\) is defined by
\[
\Psi(x)
=\begin{cases}
\big(\varphi_c(\bar{x})+\varphi_r(\bar{x})\big)\varphi_l(x)+\varphi_l(\bar{x})\varphi_r(x),
& x\in\dom\varphi_l\cap\dom\varphi_r,\\
+\infty, & \text{otherwise}.
\end{cases}
\]
\end{lemma}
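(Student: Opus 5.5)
We take the defining inequality of the Fréchet subdifferential as the target. Fix $d\in\widehat\partial\Psi(\bar x)$ and $v\in\partial\varphi_c(\bar x)$, and show that $d+\varphi_l(\bar x)v\in\widehat\partial\Phi(\bar x)$. Equivalently, we must prove
\begin{align*}
\liminf_{\substack{x\to\bar x\\ x\neq\bar x}}\frac{\Phi(x)-\Phi(\bar x)-\langle d+\varphi_l(\bar x)v,\,x-\bar x\rangle}{\|x-\bar x\|}\ \ge 0 .
\end{align*}
Points outside $\dom\varphi_l\cap\dom\varphi_c\cap\dom\varphi_r$ give $\Phi(x)=+\infty$ and do not affect the liminf. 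So we only need $x$ in that intersection, close to $\bar x$.

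The key step is an algebraic decomposition of $\Phi(x)-\Phi(\bar x)$. Writing $\varphi_l(x)=\varphi_l(\bar x)+(\varphi_l(x)-\varphi_l(\bar x))$, a direct expansion gives
\begin{align*}
\Phi(x)-\Phi(\bar x)
&=\big[\Psi(x)-\Psi(\bar x)\big]+\varphi_l(x)\big(\varphi_c(x)-\varphi_c(\bar x)\big)\\
&\quad+\big(\varphi_l(x)-\varphi_l(\bar x)\big)\big(\varphi_r(x)-\varphi_r(\bar x)\big).
\end{align*}
This uses $\Psi(\bar x)=(\varphi_c(\bar x)+\varphi_r(\bar x))\varphi_l(\bar x)+\varphi_l(\bar x)\varphi_r(\bar x)$; the extra $\varphi_l(\bar x)\varphi_r(\bar x)$ terms cancel, and this should be checked carefully. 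We then bound each of the three pieces.

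The first bracket, divided by $\|x-\bar x\|$ and after subtracting $\langle d,x-\bar x\rangle$, has liminf $\ge0$ because $d\in\widehat\partial\Psi(\bar x)$.

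The second piece uses the nonnegativity of $\varphi_l$ together with convexity: $\varphi_c(x)-\varphi_c(\bar x)\ge\langle v,x-\bar x\rangle$. Hence
\begin{align*}
\varphi_l(x)\big(\varphi_c(x)-\varphi_c(\bar x)\big)\ \ge\ \varphi_l(x)\langle v,x-\bar x\rangle
=\varphi_l(\bar x)\langle v,x-\bar x\rangle+\big(\varphi_l(x)-\varphi_l(\bar x)\big)\langle v,x-\bar x\rangle .
\end{align*}
The remainder is bounded by $L\|v\|\|x-\bar x\|^2$, where $L$ is the local Lipschitz constant of $\varphi_l$, so it is $o(\|x-\bar x\|)$.

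The third piece is at most $L\|x-\bar x\|\,|\varphi_r(x)-\varphi_r(\bar x)|$ in absolute value. This is $o(\|x-\bar x\|)$ because $\varphi_r$ is continuous at $\bar x$.

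Combining the three estimates, and using superadditivity of liminf, gives the required inequality.

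The main obstacle is the second piece. The sign argument requires $\varphi_l(x)\ge0$: we multiply the subgradient inequality by $\varphi_l(x)$ and not by $\varphi_l(\bar x)$. The decomposition is chosen precisely so that the possibly unbounded change $\varphi_c(x)-\varphi_c(\bar x)$ is paired with the nonnegative factor, while the cross term involves only $\varphi_r$, for which only continuity is available. If the cancellation in the expansion fails as written, we will redistribute the cross term between the $\varphi_c$ and $\varphi_r$ factors in the same spirit.
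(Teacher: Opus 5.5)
Your proof is correct. The identity $\Phi(x)-\Phi(\bar x)=[\Psi(x)-\Psi(\bar x)]+\varphi_l(x)(\varphi_c(x)-\varphi_c(\bar x))+(\varphi_l(x)-\varphi_l(\bar x))(\varphi_r(x)-\varphi_r(\bar x))$ holds exactly as written: on full expansion all the $\varphi_l(\bar x)\varphi_r(\bar x)$ and cross terms cancel, so your fallback of redistributing the cross term is not needed.

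Your three estimates then give the Fr\'echet inequality for $d+\varphi_l(\bar x)v$. This includes using $\varphi_l(x)\ge 0$ to multiply the subgradient inequality, together with the observation that points outside the common domain only contribute $+\infty$. The paper itself gives no proof of this lemma and simply imports it from \cite{zhou2025min}; your argument, working directly from the definition of $\widehat\partial$, is a complete and self-contained verification.
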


\begin{lemma}[{\cite{Bento2025Convergence}}] \label{lem:15}
Let $\sigma > 0$ and $p \in (0,1)$, and let $\{a_k\} \subset (0, \infty)$ be a 
nonincreasing sequence. Assume that $a_{k+1} \leq a_k - \sigma a_{k+1}^p$ 
for all $k \in \mathbb{N}$. If $\{a_k\}$ converges to $\bar{a}$, then $\bar{a} = 0$, 
and $\{a_k\}$ converges  Q-superlinearly to 0.
\end{lemma}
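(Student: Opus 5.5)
The statement to prove is Lemma~\ref{lem:15}. Let $\{a_k\}\subset(0,\infty)$ be nonincreasing with $a_{k+1}\le a_k-\sigma a_{k+1}^p$, $\sigma>0$, $p\in(0,1)$, and $a_k\to\bar a$. We must show $\bar a=0$ and that the convergence is Q-superlinear, i.e. $a_{k+1}/a_k\to 0$. The plan has two steps.

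\textbf{Step 1: the limit is zero.} Rearrange the recursion as $\sigma a_{k+1}^p\le a_k-a_{k+1}$. Since $\{a_k\}$ is nonincreasing and bounded below by $0$, the limit $\bar a\ge 0$ exists. The right-hand side $a_k-a_{k+1}$ then tends to $0$, so $a_{k+1}^p\to 0$. Because $p>0$, this forces $a_{k+1}\to 0$, hence $\bar a=0$.

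\textbf{Step 2: the ratio tends to zero.} Set $t_k:=a_{k+1}/a_k\in(0,1]$. Dividing the recursion by $a_{k+1}$ gives
\begin{align*}
\sigma a_{k+1}^{p-1}\le \frac{a_k}{a_{k+1}}-1=\frac{1}{t_k}-1 .
\end{align*}
Since $p-1<0$ and $a_{k+1}\to 0^+$ with $a_{k+1}>0$, the left-hand side $\sigma a_{k+1}^{p-1}$ tends to $+\infty$. Therefore $1/t_k\to+\infty$, that is, $t_k\to 0$. This is exactly Q-superlinear convergence of $\{a_k\}$ to $0$.

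The only point needing care is the positivity $a_k>0$. It makes the division by $a_{k+1}$ legitimate and makes the negative power $a_{k+1}^{p-1}$ well defined, and the hypothesis $\{a_k\}\subset(0,\infty)$ supplies it directly. No real obstacle remains; monotonicity is used only to guarantee that the limit exists.
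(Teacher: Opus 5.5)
Your proof is correct and complete: Step 1 gives $\bar a^p=\lim_{k} a_{k+1}^p=0$, and dividing the recursion by $a_{k+1}>0$ to obtain $\sigma a_{k+1}^{p-1}\le a_k/a_{k+1}-1\to+\infty$ yields the ratio condition $a_{k+1}/a_k\to 0$. The paper gives no proof of its own here; it imports the lemma from Bento et al.\ and later relies on exactly this ratio condition for $\varepsilon_k$ with $p=2\theta$, so your argument matches what the paper uses.
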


The KL property was originally introduced by Kurdyka \cite{kurdyka1998gradients} as a generalization of the {\L}ojasiewicz inequality \cite{Thom1965}, and was later extended to nonsmooth settings~\cite{2010Attouch,bolte2014proximal}.
\begin{definition}\label{0129def:1}
Let $s \in (0, +\infty]$. We denote by $\Phi_s$ the set of all concave and continuous functions $\varphi: [0, s) \to [0, +\infty)$ that satisfy the following conditions: (i) $\varphi(0) = 0$; (ii) $\varphi$ is $\mathcal{C}^1$ on $(0, s)$ and continuous at $0$; (iii) for all $t \in (0, s)$, $\varphi'(t) > 0$.
\end{definition}

\begin{definition}[KL property]\label{def:kl}
Let $f: \mathbb{R}^n \to (-\infty, +\infty]$ be proper l.s.c. and $\bar{x} \in \dom \partial f$. Denote 
$[\nu_1 < f < \nu_2] := \{x \in \mathbb{R}^n: \nu_1 < f(x) < \nu_2\}$. 
We say $f$ satisfies the KL property at $\bar{x}$ if there exist $\nu \in (0, +\infty]$, 
a neighborhood $U$ of $\bar{x}$, and a continuous concave function $\varphi \in \Phi_{\nu}$ 
such that: 
\[\varphi'(f(x) - f(\bar{x})) \cdot \dist(0, \partial f(x)) \geq 1,\, \forall x \in U \cap [f(\bar{x}) < f < f(\bar{x}) + \nu].\]
The function $\varphi$ is termed the desingularizing function. If $f$ satisfies the KL property 
at every point in $\dom \partial f$, it is called a KL function.
\end{definition}

\begin{lemma}[Uniformized KL property]\label{0624lem:1}
Let $\Omega \subseteq \mathbb{R}^n$ be compact and $f: \mathbb{R}^n \to (-\infty, +\infty]$ proper l.s.c. 
and constant on $\Omega$. If $f$ satisfies the KL property on $\Omega$, then 
there exist $\eta > 0$, $\nu > 0$, and $\varphi \in \Phi_{\nu}$ such that for all 
$\bar{x} \in \Omega$ and $x \in \{ x \in \mathbb{R}^n: \dist(x, \Omega) < \eta \} \cap 
[f(\bar{x}) < f(x) < f(\bar{x}) + \nu],$ 
the following inequality holds:
\begin{align}\label{0127eq:1}
\varphi'(f(x) - f(\bar{x})) \cdot \dist(0, \partial f(x)) \geq 1.  
\end{align}
\end{lemma}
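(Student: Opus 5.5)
The statement to prove is the uniformized KL property (Lemma~\ref{0624lem:1}). I would use the standard compactness argument of Bolte, Sabach and Teboulle.

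Let $\zeta$ denote the constant value of $f$ on $\Omega$. For each $\bar{x}\in\Omega$, the KL property (Definition~\ref{def:kl}) gives a radius $\varepsilon_{\bar x}>0$, a threshold $\nu_{\bar x}>0$ and a desingularizing function $\varphi_{\bar x}\in\Phi_{\nu_{\bar x}}$. These satisfy the KL inequality on $B(\bar x,\varepsilon_{\bar x})\cap[\zeta<f<\zeta+\nu_{\bar x}]$; here I use that $f(\bar x)=\zeta$ for every $\bar x\in\Omega$. The open balls $B(\bar x,\varepsilon_{\bar x}/2)$ cover $\Omega$. Compactness extracts a finite subcover indexed by $\bar x_1,\dots,\bar x_p$, with radii $\varepsilon_i$, thresholds $\nu_i$ and functions $\varphi_i$.

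Next I choose the uniform constants. Set $\nu:=\min_i\nu_i>0$ and $\eta:=\min_i\varepsilon_i/2>0$. Take $x$ with $\dist(x,\Omega)<\eta$. Since $\Omega$ is compact, there is $\bar y\in\Omega$ with $\|x-\bar y\|<\eta$, and $\bar y$ lies in some $B(\bar x_i,\varepsilon_i/2)$. The triangle inequality then gives $\|x-\bar x_i\|<\eta+\varepsilon_i/2\le\varepsilon_i$. So every such $x$ with $\zeta<f(x)<\zeta+\nu$ lies in the KL region of some $\bar x_i$, and for that $i$
\[
\varphi_i'(f(x)-\zeta)\,\dist(0,\partial f(x))\ge 1 .
\]

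It remains to merge the $\varphi_i$ into a single $\varphi\in\Phi_\nu$ with $\varphi'\ge\varphi_i'$ on $(0,\nu)$ for every $i$. I would take $\varphi:=\sum_{i=1}^p\varphi_i$ restricted to $[0,\nu)$. It is concave and continuous, and it vanishes at $0$. It is $C^1$ on $(0,\nu)$ with $\varphi'=\sum_i\varphi_i'>0$, and $\varphi'\ge\varphi_i'$ because every term is positive. Hence $\varphi\in\Phi_\nu$, and the single inequality \eqref{0127eq:1} holds with $f(\bar x)=\zeta$ for all $\bar x\in\Omega$. This merging step is the only point needing care. Taking a pointwise maximum of the $\varphi_i$ would break the $C^1$ requirement, which is why I use the sum instead.
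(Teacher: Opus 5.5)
Your proposal is correct and is essentially the standard Bolte--Sabach--Teboulle compactness argument (finite subcover, $\nu=\min_i\nu_i$, $\varphi=\sum_i\varphi_i$); the paper states this lemma without proof and relies on that cited argument, and your half-radius cover just makes explicit the choice of $\eta$ that their proof leaves to a Lebesgue-number remark. The only cosmetic point is that if every $\nu_i=+\infty$, you may replace $\nu$ by any finite positive number, since restricting $\varphi$ to $[0,\nu)$ keeps it in $\Phi_\nu$.
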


\section{BPLADMM and convergence analysis}\label{sec:bpladmm}

In this section, we present the iterative steps of the proposed BPLADMM for solving problem~\cref{Equ:1} and analyze its convergence properties. For clarity and simplicity in the theoretical analysis, we use $\{z_k\}_{k\ge 1}$ to denote the sequence $\{x_k, u_k, \lambda_k\}_{k\ge 1}$. Throughout the analysis, the sequence $\{z_k\}_{k\ge 1}$ is assumed to be generated by the proposed BPLADMM. 

We first introduce the AL function associated with 
problem~\cref{Equ:1}, defined as 
\[\mathcal{L}_\rho(x, u, \lambda)
= f(u) +\delta_{\mathcal{S}}(x)+ \frac{h(x)}{g(x)}
+ \langle \lambda, F(x) - Mu \rangle
+ \frac{\rho}{2} \|F(x) - Mu\|^2,\]
where $\rho \in \mathbb{R}_{++}$ is a penalty parameter and 
$\lambda \in \mathbb{R}^q$ denotes the Lagrange multiplier. To address fractional term $h(x)/g(x)$, we follow the approach 
in \cite{zhou2025min} and eliminate the fractional structure by introducing a 
new variable, that is $h(x)/g(x)=\max_{c \in \mathbb{R}} \left\{ 2ch(x) - c^2h(x)g(x) \right\}$.
For a fixed $x^*\in \mathcal{S}$, the function of $c$ on the right-hand side is concave, with $c^* = 1 / g(x^*)$ as a maximizer, which is unique if $h(x^*) >0$. Furthermore, we provide a non-fractional reformulation of $\mathcal{L}_\rho$, namely, 
\begin{equation}\label{1008eq:1}
\mathscr{L}_\rho(x,u,\lambda,c,y)
= f(u)+\delta_{\mathcal{S}}(x) + 2ch(x) + c^2h(x)\big(g^*(y) - \langle x, y \rangle\big) + \psi(x,u,\lambda),
\end{equation}
where $g^*(y)$ denotes the conjugate function of $g(x)$, and 
$\psi(x,u,\lambda)
:= \langle \lambda, F(x) - Mu \rangle
+ \frac{\rho}{2}\|F(x) - Mu\|^2$. 
It is well known that $\mathcal{L}_\rho(x,u,\lambda)
\leq \mathscr{L}_\rho\big(x,u,\lambda,1/g(x),y\big)$, 
with equality iff $y \in \partial g(x)$.
The following lemma establishes the relationship between the critical points of problems \cref{Equ:1} and \cref{1008eq:1}.

\begin{lemma}\label{lemma:2}
Suppose that $0 \in \partial \mathscr{L}_\rho(x^*,u^*,\lambda^*,c^*,y^*)$.
Let $c^* = 1/g(x^*)$, $y^* \in \partial g(x^*)$, and assume that $x^* \in \mathcal{S}
\cap \ri(\dom h)$ and $\delta_\mathcal{S}$ is regular on $\mathcal{S}$.
Then $(x^*,u^*,\lambda^*)$ is a lifted critical point of problem~\cref{Equ:1}.
\end{lemma}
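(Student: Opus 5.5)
We compute the partial subdifferentials of $\mathscr{L}_\rho$ at $w^*:=(x^*,u^*,\lambda^*,c^*,y^*)$ block by block and translate each into a component of \cref{eq:3}. The $u$- and $\lambda$-blocks are straightforward. The map $(u,\lambda)\mapsto f(u)+\psi(x^*,u,\lambda)$ is continuously differentiable. Hence $0\in\partial\mathscr{L}_\rho(w^*)$ gives two equations. The first is
\[
0=\nabla f(u^*)-M^\top\lambda^*-\rho M^\top\bigl(F(x^*)-Mu^*\bigr).
\]
The second is $0=F(x^*)-Mu^*$. Substituting the second into the first yields $\nabla f(u^*)=M^\top\lambda^*$. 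So the last two lines of \cref{eq:3} hold.

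The $c$- and $y$-blocks identify $c^*$ and $y^*$. From the $c$-block, $0=2h(x^*)+2c^*h(x^*)\bigl(g^*(y^*)-\langle x^*,y^*\rangle\bigr)$. The hypothesis $y^*\in\partial g(x^*)$ and the Fenchel--Young equality give $g^*(y^*)-\langle x^*,y^*\rangle=-g(x^*)$. Thus this equation is consistent with $c^*=1/g(x^*)$, and we will only need that identity. The $y$-block would give $x^*\in\partial g^*(y^*)$ when $h(x^*)>0$, which is again consistent. We do not need it, since $y^*\in\partial g(x^*)$ is assumed.

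The main work is the $x$-block. The $x$-component of any $\xi\in\partial\mathscr{L}_\rho(w^*)$ lies in the limiting subdifferential in $x$ of $\Phi(x):=\delta_\mathcal{S}(x)+2c^*h(x)+(c^*)^2h(x)\bigl(g^*(y^*)-\langle x,y^*\rangle\bigr)$, shifted by $\nabla_x\psi=\nabla F(x^*)^\top(\lambda^*+\rho(F(x^*)-Mu^*))=\nabla F(x^*)^\top\lambda^*$. The projection of a joint limiting subgradient onto one block generally only contains partial subgradients in the reverse direction, which is the wrong inclusion here. To avoid this I would work in the Fréchet sense, or use that the coupling term is locally Lipschitz jointly (Lemma~\ref{lem:new1}(ii)), so that the $x$-part reduces to $\partial_x\Phi(x^*)$.

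Next I would write $\Phi$ explicitly. Using $c^*=1/g(x^*)$ and $g^*(y^*)=\langle x^*,y^*\rangle-g(x^*)$,
\[
\Phi(x)=\delta_\mathcal{S}(x)+h(x)\Bigl(\tfrac{1}{g(x^*)}-\tfrac{\langle x-x^*,y^*\rangle}{g(x^*)^2}\Bigr)+\tfrac{h(x)}{g(x^*)}.
\]
The bracket is affine and close to $1/g(x^*)>0$ near $x^*$. Since $h$ is Lipschitz near $\mathcal{S}$ and $x^*\in\ri(\dom h)$, the product rule for a Lipschitz function times a $C^1$ function gives
\[
\partial\Phi(x^*)\subseteq\partial\delta_\mathcal{S}(x^*)+\frac{2}{g(x^*)}\partial h(x^*)-\frac{h(x^*)}{g(x^*)^2}\,y^*.
\]
Here the sum rule of Lemma~\ref{lem:new1}(ii) applies to the sum of $\delta_\mathcal{S}$ and a locally Lipschitz function; the regularity of $\mathcal{S}$ is used there, and when converting Fréchet to limiting subgradients. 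The extra $1/g(x^*)$ term combines with $\partial h$, since $h$ is convex and positive scalings give $\partial(g(x^*)h)(x^*)=g(x^*)\partial h(x^*)$.

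Finally I would normalize. Multiplying by $g(x^*)$ and rescaling, or equivalently matching against \cref{eq:3} with $\partial(g(x^*)h)(x^*)/g(x^*)^2=\partial h(x^*)/g(x^*)$, should give the first inclusion of \cref{eq:3} with $y^*\in\partial g(x^*)$.

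I expect the hardest step to be the bookkeeping of constants. The factor $2c^*h$ together with $(c^*)^2 h\cdot(-g)$ produces exactly $h/g(x^*)$ in value, but the derivative of $2c^*h(x)-(c^*)^2h(x)\langle x-x^*,y^*\rangle$ gives coefficient $1/g(x^*)$ on $\partial h$, not $2/g(x^*)$. I would check this carefully, since the correct expansion of $\Phi$ near $x^*$ is $2c^*h(x)+(c^*)^2h(x)(-g(x^*)-\langle x-x^*,y^*\rangle)$. This yields exactly $\tfrac{1}{g(x^*)}\partial h(x^*)-\tfrac{h(x^*)}{g(x^*)^2}y^*$, matching \cref{eq:3}. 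The other delicate point is justifying the block separation of the limiting subdifferential. There I would rely on the local Lipschitz continuity of all non-indicator terms and on the regularity of $\delta_\mathcal{S}$.
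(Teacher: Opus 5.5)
Your overall route is the paper's. The $u$- and $\lambda$-blocks follow from smoothness of $f+\psi$. For the $x$-block, your ``Lipschitz times affine'' product rule is the paper's device of splitting off a remainder $\eta$ that is strictly differentiable at $x^*$ with zero gradient. There are two problems, however.

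\textbf{The $x$-block computation.} As written it is wrong. Since $g^*(y^*)-\langle x,y^*\rangle=-g(x^*)-\langle x-x^*,y^*\rangle$ and $2c^*h-(c^*)^2g(x^*)h=c^*h$, one has
\[
\Phi(x)=\delta_{\mathcal{S}}(x)+h(x)\,a(x),\qquad a(x):=\frac{1}{g(x^*)}-\frac{\langle x-x^*,y^*\rangle}{g(x^*)^2},
\]
with no extra $+h(x)/g(x^*)$. Your inclusion with $\frac{2}{g(x^*)}\partial h(x^*)$ is therefore false in general. The ``normalization'' step cannot rescue it, because a factor $2$ on one term only is not removed by rescaling. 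No rescaling is needed anyway: since $\partial(g(x^*)h)(x^*)=g(x^*)\partial h(x^*)$, the first line of \eqref{eq:3} already reads
$0\in\partial\delta_{\mathcal{S}}(x^*)+\frac{1}{g(x^*)}\partial h(x^*)-\frac{h(x^*)}{g(x^*)^2}\partial g(x^*)+\nabla F(x^*)^\top\lambda^*$.
Your last paragraph lands on the right coefficient. Note, though, that it is the $-(c^*)^2g(x^*)h(x)$ term, not the $\langle x-x^*,y^*\rangle$ term, that reduces $2c^*$ to $c^*$. With the correct $\Phi$, write
$ha=a(x^*)h+h(x^*)(a-a(x^*))+(h-h(x^*))(a-a(x^*))$.
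The last summand is exactly the paper's $\eta$ minus the constant $c^*h(x^*)$. Lemma~\ref{lem:new1}(i)--(ii) then gives
$\partial\Phi(x^*)\subseteq\partial\delta_{\mathcal{S}}(x^*)+\frac{1}{g(x^*)}\partial h(x^*)-\frac{h(x^*)}{g(x^*)^2}y^*$.
Only this inclusion is needed, so the regularity of $\mathcal{S}$ plays no role in your direction.

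\textbf{Passing from the joint subgradient to $\partial\Phi(x^*)$.} Your justification here fails. The coupling term contains $c^2h(x)g^*(y)$, and $g^*$ need not be finite near $y^*$. For $g=\|\cdot\|_2$, the choice in both of the paper's examples, $g^*$ is the indicator of the closed unit ball and $y^*=x^*/\|x^*\|$ lies on its boundary. So the coupling is not jointly locally Lipschitz, and Lemma~\ref{lem:new1}(ii) cannot be applied in $(x,c,y)$. Working with Fr\'echet subgradients at $w^*$ is not available either, because the hypothesis concerns a limiting subgradient.

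The paper does not prove this separation. It simply reads the hypothesis blockwise, taking the limiting subdifferential in $x$ with $(u^*,\lambda^*,c^*,y^*)$ frozen. You should either adopt that reading explicitly or argue through approximating points, as follows.
\begin{enumerate}
\item Take $\hat w_j\to w^*$ with $\mathscr{L}_\rho(\hat w_j)\to\mathscr{L}_\rho(w^*)$ and Fr\'echet subgradients tending to $0$.
\item Their $x$-components are Fr\'echet subgradients at $\hat x_j$ of $\delta_{\mathcal{S}}+h\,a_j+\psi(\cdot,\hat u_j,\hat\lambda_j)$, where $a_j(x):=2\hat c_j+\hat c_j^{\,2}\bigl(g^*(\hat y_j)-\langle x,\hat y_j\rangle\bigr)$ is affine.
\item If $h(x^*)>0$, convergence of the values forces $a_j(\hat x_j)\to c^*$.
\item Apply the same decomposition at $\hat x_j$. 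The bound $\|d\|\le L_h$ for $d\in\partial h$ near $\mathcal{S}$ and the closed graph of $\partial\delta_{\mathcal{S}}$ then let you pass to the limit.
\end{enumerate}
When $h(x^*)=0$, $g^*(\hat y_j)$ is uncontrolled and this argument breaks down.
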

\begin{proof}
Let $\eta(x) := c^*  h(x) + (c^*)^2\big[ h(x)- h(x^*)\big]
\big(g^*(y^*) - \langle x, y^* \rangle\big)$.
Then
\begin{align}\label{1030eq2}
\lim_{x,z \to x^*,\ x\neq z} \frac{|\eta(x) -\eta(z)|}{\|x -z\|}
\leq (c^*)^2 L_h\|y^*\|\lim_{x,z \to x^*,\ x\neq z}\left(\|x -x^*\| +\|z -x^*\|\right) =0,
\end{align}
where the inequality follows from the Lipschitz continuity of $ h$
around $x^*$.
Consequently, $\eta$ is strictly differentiable at $x^*$ with $\nabla \eta(x^*) = 0$.
For $c^* = 1/g(x^*)$ and $y^* \in \partial g(x^*)$, we obtain
\begin{align*}
\partial_x \mathscr{L}_\rho
&= \partial\!\left(\delta_\mathcal{S}+
2c^* h + (c^*)^2  h\big(g^*(y^*) - \langle x, y^* \rangle\big)
+ \psi(x,u^*,\lambda^*)
\right)\!(x^*) \\
&= \partial\!\left(\delta_\mathcal{S}+
c^* h + (c^*)^2  h(x^*)\big(g^*(y^*) - \langle x, y^* \rangle\big)
\right)\!(x^*)
+ \nabla \eta(x^*) \\
&\quad + \nabla F(x^*)^\top
\big(\lambda^* + \rho(F(x^*) - M u^*)\big) \\
&=\partial\delta_\mathcal{S}(x^*)+ c^* \partial  h(x^*)
- (c^*)^2  h(x^*) y^*
+ \nabla F(x^*)^\top
\big(\lambda^* + \rho(F(x^*) - M u^*)\big),
\end{align*}
where the last equality follows from the regularity of $\delta_\mathcal{S}$ and $c^*h$ at $x^*$.
Furthermore,  $\partial_u \mathscr{L}_\rho 
= \nabla f(u^*) - M^\top \big(\lambda^* + \rho(F(x^*) - M u^*)\big)$ and $\partial_\lambda \mathscr{L}_\rho = F(x^*) - M u^*$.  
Thus, if  
$0 \in \partial \mathscr{L}_\rho(x^*,u^*,\lambda^*,c^*,$ $y^*)$,  
then \cref{eq:3} holds. The proof is complete. 
\end{proof}

Before presenting the algorithm, we define the following linearized notations for the nonlinear terms in the AL function: $\ell_{F}(x;\bar{x}):=F(\bar{x})+\nabla F(\bar{x})(x-\bar{x})$ and $\widetilde \psi(x,u,\lambda;\bar{x}):=\langle \lambda,\ell_F(x;\bar{x})-Mu\rangle+\frac{\rho}{2}\|\ell_F(x;\bar{x})-Mu\|^2$.

\subsection{The proposed BPLADMM}

In this section, we describe the iterative steps of the proposed BPLADMM in Algorithm~\ref{alg:1}. It is worth noting that, for the \(x\)-subproblem, we linearize the composite term \(c h(x) + c^{2} h(x)(g^\ast(y) - \langle x, y \rangle)\) in \(\mathscr{L}_\rho\) with respect to \(x\). Specifically, define 
$\eta := c_k h(x) + c_k^{2} (h(x) - h(x_k))(g^\ast(y_k) - \langle x, y_k \rangle)$. 
Similar to~\cref{1030eq2}, it follows that \(\nabla \eta(x_k) = 0\). Consequently, we obtain 
$\nabla_x\!\left[\, c_k h(x_k) + c_k^{2} h(x_k)(g^\ast(y_k) - \langle x_k, y_k \rangle) \right] = -c_k^{2} h(x_k) y_k$. Furthermore, we replace $\psi(x,u,\lambda)$ with $\widetilde\psi(x,u,\lambda;\bar{x})$, where the latter is obtained by linearizing the term involving the nonlinear mapping $F$ in \cref{1008eq:1} to reduce the computational cost.
Furthermore, we incorporate Bregman distances into the $x$-subproblem, which leads to easy-to-solve updates and even closed-form solutions. 

\begin{algorithm}[!htb]
\caption{BPLADMM}\label{alg:1}
\begin{algorithmic}[1]
\REQUIRE  Let $0 < r < 2$, $\rho > 0$, and let $\{\phi_k\}$ be a sequence of differentiable functions, uniformly $\tau$-strongly convex and with uniformly $L_{\nabla\phi}$-Lipschitz continuous gradients.
\STATE \textbf{Initialize:} $(x_0, u_0, \lambda_0)$ with $x_0\in \mathcal{S}$, $\lambda_0 \in \Image(M)$ and $c_0 = 1/g(x_0)$. 
\WHILE{the stopping criterion is not met}
\STATE \textbf{Step 1: $x$-update} 
\STATE Choose $ y_k\in \partial g(x_k) $.
\begin{align}\label{1129Equ3.5}
x_{k+1}=\underset{x\in \mathcal{S}}{\arg \min} \left\{c_k h(x)-\langle c_k^2h(x_k)y_k, x-x_k \rangle \!+\!\widetilde\psi(x,u_k,\lambda_k;x_k)\!+\!D_{\phi_k}(x,x_k)\right\}.
\end{align}
\vspace{-1.5em}
\STATE Compute $ c_{k+1}=1/g(x_{k+1}) $.
\STATE \textbf{Step 2: $u$-update}
\begin{align}\label{Equ:10}
u_{k+1} =\underset{u\in \mathbb{R}^m}{\arg \min}\, 
\mathscr{L}_\rho(x_{k+1},u,\lambda_{k},c_{k+1},y_k).  
\end{align}	
\vspace{-1.5em}
\STATE \textbf{Step 3: $\lambda$-update with over-relaxation}
\begin{align}
\lambda_{k+1} = \lambda_k + r\rho (F(x_{k+1}) - Mu_{k+1}). \label{Equ:12}
\end{align}	
\vspace{-1.5em}
\STATE Update $k \leftarrow k + 1$ and go to Step 1.
\ENDWHILE
\ENSURE $(x_{k+1}, u_{k+1}, \lambda_{k+1})$.
\end{algorithmic}
\end{algorithm}

For the convergence analysis, we impose the following standing assumptions.    
\begin{assumption}\label{ass:1}
~
\begin{enumerate}
\item The set $\mathcal{S}$ is nonempty, closed, and regular.
\item The gradient $\nabla f$ is $L_{\nabla f}$-Lipschitz continuous.
\item The Jacobian $\nabla F$ is $L_{\nabla F}$-Lipschitz continuous.
\item The function $h$ is a proper l.s.c. convex function that is nonnegative and Lipschitz continuous on an open neighborhood of $\mathcal{S}$ with constant $L_h$.
\item The function $g$ is a proper l.s.c. convex function that is positive on $\mathcal{S}$ and Lipschitz continuous on an open neighborhood of $\mathcal{S}$ with constant $L_g$.
\item The matrix $M \in \mathbb{R}^{q \times m}$ is full column rank, which implies $\sigma_{\min}(M^\top M)>0$, where $\sigma_{\min} (\cdot)$ denotes the smallest eigenvalue of a matrix.
\item The image of $F$ is contained within the image of $M$, i.e., $F(\mathcal{S}) \subseteq \Image(M)$.
\end{enumerate}
\end{assumption}

Before proceeding, we state an assumption on the iterate sequence generated by Algorithm~\ref{alg:1}, which plays a critical role in the subsequent convergence analysis.
\begin{assumption}\label{0902ass:1}
The sequence $\{z_k\}_{k\ge1}$ generated by Algorithm~\ref{alg:1} is bounded.
\end{assumption}

To clarify the notation for different regions, let $\mathcal{X}\subseteq \mathcal{S}\times \mathbb{R}^m\times\mathbb{R}^q$ denote the bounded region appearing in Assumption~\ref{0902ass:1}, and let $\bar{\mathcal{X}}$ stand for the closure of $\mathcal{X}$, i.e., $\operatorname{cl}(\mathcal{X})$. Let $\bar{\mathcal{X}}^x$, $\bar{\mathcal{X}}^u$, and $\bar{\mathcal{X}}^\lambda$ denote the projections of $\bar{\mathcal{X}}$ onto $\mathbb{R}^n$, $\mathbb{R}^m$, and $\mathbb{R}^q$, respectively.

\begin{lemma}\label{0902lem:1} 
Suppose that Assumptions~\ref{ass:1} and \ref{0902ass:1} hold, and that $g(x)$ is bounded below by $m_g>0$ on $\bar{\mathcal{X}}^x$. Moreover, there exists $\Delta>0$ such that the following hold.
Let 
$\bar{\mathcal{X}}_\Delta^x:=\big\{x\in \mathbb{R}^n:\dist(x,\bar{\mathcal{X}}^x)^2\leq \Delta\big\}$, $\bar{\mathcal{X}}_\Delta^u:=\big\{u\in \mathbb{R}^m:\dist(u,\bar{\mathcal{X}}^u)^2\leq \Delta\big\}$, and $\bar{\mathcal{X}}_\Delta^\lambda:=\big\{\lambda\in \mathbb{R}^q:\dist(\lambda,\bar{\mathcal{X}}^\lambda)^2\leq \Delta\big\}$. Define $\bar{\mathcal{X}}_\Delta=\bar{\mathcal{X}}_\Delta^x\times \bar{\mathcal{X}}_\Delta^u\times\bar{\mathcal{X}}_\Delta^\lambda$.
\begin{enumerate}
\item For any $x\in \bar{\mathcal{X}}_\Delta^x$, it holds that $g(x)\geq m_g/2$.
\item The constants below are finite: $M_h := \sup_{x \in \bar{\mathcal{X}}_\Delta^x} h(x)$, $M_g := \sup_{x \in \bar{\mathcal{X}}_\Delta^x} g(x)$, $M_{\nabla F} := \sup_{x \in \bar{\mathcal{X}}_\Delta^x} \|\nabla F(x)\|$ and 
$M_{h/g^2} := \sup_{x \in \bar{\mathcal{X}}_\Delta^x} {h(x)}/{g(x)^2}$. 
\item The functions $1/g$ and $h/g^2$ are Lipschitz continuous on $\bar{\mathcal{X}}_\Delta^x$ with Lipschitz constants $L_{1/g}$ and $L_{h/g^2}$, respectively.
\end{enumerate}
\end{lemma}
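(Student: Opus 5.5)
The plan is to choose $\Delta$ small enough that $\bar{\mathcal{X}}_\Delta^x$ stays inside the open neighborhood $\mathcal{O}$ of $\mathcal{S}$ on which $h$ and $g$ are Lipschitz (and finite). The rest then follows from compactness and elementary Lipschitz estimates. By Assumption~\ref{0902ass:1}, $\bar{\mathcal{X}}^x$ is a compact subset of $\mathcal{S}$, since $\mathcal{S}$ is closed and the $x$-iterates lie in $\mathcal{S}$. Since $\mathcal{O}$ is open, its complement is closed and disjoint from $\bar{\mathcal{X}}^x$, so $\delta_0:=\dist(\bar{\mathcal{X}}^x,\mathbb{R}^n\setminus\mathcal{O})>0$. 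We take $\Delta$ with $\sqrt{\Delta}<\delta_0$. Then every $x\in\bar{\mathcal{X}}_\Delta^x$ lies in $\mathcal{O}$. Moreover, $\bar{\mathcal{X}}_\Delta^x$ is closed and bounded, hence compact; the same holds for the $u$ and $\lambda$ sets.

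For (i), take $x\in\bar{\mathcal{X}}_\Delta^x$ and a nearest point $\bar x\in\bar{\mathcal{X}}^x$, which exists by compactness. We would like $\|x-\bar x\|\le\sqrt\Delta$ and $g(x)\ge g(\bar x)-L_g\|x-\bar x\|\ge m_g-L_g\sqrt\Delta$. The one subtlety is that the Lipschitz estimate along the segment $[\bar x,x]$ requires the segment to lie in $\mathcal{O}$. This holds because each point of the segment is within $\sqrt\Delta<\delta_0$ of $\bar x$. Additionally shrinking $\Delta$ so that $L_g\sqrt\Delta\le m_g/2$ gives $g(x)\ge m_g/2$. This choice of $\Delta$, satisfying both constraints simultaneously, is the main point of the proof. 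Everything else is routine.

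For (ii), we use the same estimates. They give $h(x)\le h(\bar x)+L_h\sqrt\Delta$ and $g(x)\le g(\bar x)+L_g\sqrt\Delta$. Here $h$ and $g$ are continuous (Lipschitz) on the compact set $\bar{\mathcal{X}}^x\subset\mathcal{O}$, so they are bounded there, and $M_h,M_g<\infty$. $M_{\nabla F}$ is finite because $\nabla F$ is continuous and $\bar{\mathcal{X}}_\Delta^x$ is compact. Finally, $M_{h/g^2}\le M_h/(m_g/2)^2=4M_h/m_g^2$ by (i).

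For (iii), we would use $\bar{\mathcal{X}}_\Delta^x\subset\mathcal{O}$ and the fact that $g\ge m_g/2$ there. For the first function,
\[
\Bigl|\tfrac1{g(x)}-\tfrac1{g(z)}\Bigr|=\tfrac{|g(z)-g(x)|}{g(x)g(z)}\le \tfrac{4L_g}{m_g^2}\|x-z\|,
\]
so $L_{1/g}=4L_g/m_g^2$. For the second,
\[
\tfrac{h(x)}{g(x)^2}-\tfrac{h(z)}{g(z)^2}=\tfrac{h(x)-h(z)}{g(x)^2}+h(z)\Bigl(\tfrac1{g(x)}+\tfrac1{g(z)}\Bigr)\Bigl(\tfrac1{g(x)}-\tfrac1{g(z)}\Bigr).
\]
This yields
\[
L_{h/g^2}=\tfrac{4L_h}{m_g^2}+M_h\cdot\tfrac{4}{m_g}\cdot\tfrac{4L_g}{m_g^2}.
\]
These estimates use the global Lipschitz constants of $h$ and $g$ on $\mathcal{O}$. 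If Lipschitz continuity were only meant locally, one would instead argue via compactness, covering $\bar{\mathcal{X}}_\Delta^x$ by finitely many balls. In that case the local Lipschitz bound plus boundedness gives a uniform constant by the standard argument that splits into the cases $\|x-z\|$ small and $\|x-z\|\ge\epsilon$.
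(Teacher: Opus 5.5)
Your proof is correct and follows the same route as the paper's: choose $\Delta$ small using compactness of $\bar{\mathcal{X}}^x$ and continuity of $g$, then get the suprema from compactness and the Lipschitz constants from quotient estimates using $g\ge m_g/2$, with explicit constants such as $L_{1/g}=4L_g/m_g^2$ worked out. Your extra requirement $\sqrt{\Delta}<\dist(\bar{\mathcal{X}}^x,\mathbb{R}^n\setminus\mathcal{O})$ keeps $\bar{\mathcal{X}}_\Delta^x$ inside the neighborhood where $h$ and $g$ are finite and Lipschitz, which makes explicit a point the paper's three-line sketch leaves implicit.
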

\begin{proof}
Since the continuous function $g$ is strictly positive on the compact set $\bar{\mathcal{X}}^x$, statement~(i) holds for a sufficiently small $\Delta>0$. By the compactness of $\bar{\mathcal{X}}_\Delta^x$, the continuity of $h$ and $g$, together with statement~(i), statement~(ii) holds. Using the definition of Lipschitz continuity combined with statements~(i) and (ii), we obtain statement~(iii).
\end{proof}

\begin{lemma}[{\cite[Lemma 4.1]{cohen2022dynamic}}]\label{0902lem:2}
Suppose that Assumptions~\ref{ass:1} and \ref{0902ass:1} hold. The function $\psi$ has a Lipschitz gradient on the compact set $\bar{\mathcal{X}}_\Delta$, with Lipschitz constant
\begin{align*}
L_{\psi}=\sup_{(x,u,\lambda)\in \bar{\mathcal{X}}_\Delta}\Big\{L_{\nabla F}\|\lambda+\rho(F(x)-Mu)\|\Big\}+(M_{\nabla F}+\|M\|)\big(2+\rho(M_{\nabla F}+\|M\|)\big).
\end{align*}
\end{lemma}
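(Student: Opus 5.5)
We need to prove that $\nabla\psi$ is Lipschitz on the compact set $\bar{\mathcal{X}}_\Delta$ with the constant $L_\psi$ stated in Lemma~\ref{0902lem:2}. The plan is to write the gradient blockwise, bound the difference of each block at two points, and collect the constants. For brevity write $w(x,u,\lambda):=\lambda+\rho(F(x)-Mu)$. Direct differentiation gives
\[
\nabla_x\psi=\nabla F(x)^\top w,\qquad \nabla_u\psi=-M^\top w,\qquad \nabla_\lambda\psi=F(x)-Mu .
\]
Fix two points $z=(x,u,\lambda)$ and $z'=(x',u',\lambda')$ in $\bar{\mathcal{X}}_\Delta$.

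\textbf{Step 1: the $x$-block.} Split
\[
\nabla F(x)^\top w-\nabla F(x')^\top w'=\bigl(\nabla F(x)-\nabla F(x')\bigr)^\top w+\nabla F(x')^\top (w-w').
\]
By Assumption~\ref{ass:1}(iii), the first term is at most $L_{\nabla F}\,\sup_{\bar{\mathcal{X}}_\Delta}\|w\|\,\|x-x'\|$. For the second term, $\|\nabla F(x')\|\le M_{\nabla F}$ by Lemma~\ref{0902lem:1}(ii).

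\textbf{Step 2: Lipschitz bound for $F$ and $w$.} Here I would use that $\bar{\mathcal{X}}_\Delta^x$ is convex. Closed $\Delta$-neighbourhoods of convex sets are convex, but $\bar{\mathcal{X}}^x$ itself need not be convex, so this is the delicate point. The fix I would try first is to note that the bound $\|\nabla F\|\le M_{\nabla F}$ is only needed along segments, and to enlarge to the convex hull if necessary. This only changes the suprema, which remain finite by compactness. Granting this, the mean value inequality gives $\|F(x)-F(x')\|\le M_{\nabla F}\|x-x'\|$. Hence
\[
\|w-w'\|\le\|\lambda-\lambda'\|+\rho\bigl(M_{\nabla F}\|x-x'\|+\|M\|\|u-u'\|\bigr)\le\bigl(1+\rho(M_{\nabla F}+\|M\|)\bigr)\|z-z'\|.
\]
The $u$-block then satisfies $\|M^\top(w-w')\|\le\|M\|\,\|w-w'\|$. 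The $\lambda$-block satisfies $\|F(x)-F(x')-M(u-u')\|\le(M_{\nabla F}+\|M\|)\|z-z'\|$.

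\textbf{Step 3: collecting constants.} Add the block estimates using $\|(a,b,c)\|\le\|a\|+\|b\|+\|c\|$. The total coefficient is
\[
L_{\nabla F}\sup\|w\|+(M_{\nabla F}+\|M\|)\bigl(1+\rho(M_{\nabla F}+\|M\|)\bigr)+(M_{\nabla F}+\|M\|).
\]
This equals $L_\psi$ as stated. The supremum is finite because $w$ is continuous on the compact set $\bar{\mathcal{X}}_\Delta$.

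The main obstacle is the convexity issue in Step 2, which justifies the mean value argument. Everything else is routine bookkeeping. Since the lemma is quoted from \cite[Lemma 4.1]{cohen2022dynamic}, I would also check that source's convention for the domain of $\nabla F$ bounds.
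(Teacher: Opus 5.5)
Your blockwise computation is the natural one, and the bookkeeping is correct. The $x$-block contributes $L_{\nabla F}\sup\|w\|+M_{\nabla F}\bigl(1+\rho(M_{\nabla F}+\|M\|)\bigr)$, the $u$-block contributes $\|M\|\bigl(1+\rho(M_{\nabla F}+\|M\|)\bigr)$, and the $\lambda$-block contributes $M_{\nabla F}+\|M\|$. These add up to exactly the stated $L_\psi$. The only incomplete step is Step~2, and there you cannot simply write ``granting this''.

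The obstacle you flagged is genuine. Nothing in Assumptions~\ref{ass:1} and~\ref{0902ass:1} makes $\bar{\mathcal{X}}^x_\Delta$ convex, since it is only a $\sqrt{\Delta}$-thickening of the projection of $\bar{\mathcal{X}}$. On a nonconvex set the claim with this constant can fail. Take $n=m=q=1$, $F=\sin$ (so $L_{\nabla F}=1$), $M=\epsilon>0$, $\mathcal{S}=\mathbb{R}$, and $\bar{\mathcal{X}}=\{(\pi/2,0,0),(3\pi/2,0,0)\}$. Then $M_{\nabla F}=\sin\sqrt{\Delta}$ and $\sup\|w\|\le\sqrt{\Delta}+\rho(1+\epsilon\sqrt{\Delta})$, so $L_\psi<2/\pi$ once $\rho,\epsilon,\Delta$ are small. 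Yet $\nabla_\lambda\psi=F(x)-Mu$ takes the values $1$ and $-1$ at two points of $\bar{\mathcal{X}}_\Delta$ at distance $\pi$.

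So you have to commit to one of two correct statements.
\begin{enumerate}
\item[(a)] Define $M_{\nabla F}:=\sup\{\|\nabla F(x)\|: x\in\operatorname{conv}\bar{\mathcal{X}}^x_\Delta\}$. This is finite because the convex hull of a compact set is compact and $\nabla F$ is continuous on all of $\mathbb{R}^n$, and the mean value inequality then applies. This is your fix, but it proves the lemma with a possibly larger constant, and you should state it that way.
\item[(b)] Keep the stated constant and assert the Lipschitz inequality only for pairs $z,z'\in\bar{\mathcal{X}}_\Delta$ with $[x,x']\subseteq\bar{\mathcal{X}}^x_\Delta$. For example, this holds whenever $x,x'\in\bar{\mathcal{X}}^x$ with $\|x-x'\|\le 2\sqrt{\Delta}$, because every point of the segment is then within $\sqrt{\Delta}$ of an endpoint. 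Your Steps~1--3 then go through verbatim.
\end{enumerate}
Version (b) matches how $L_\psi$ is actually used later: only for consecutive iterates, in Lemma~\ref{lem:3} and Lemma~\ref{lem:7}. The descent-lemma step in Lemma~\ref{lem:3} needs the segment $[x_k,x_{k+1}]$ inside the set in any case, so either short steps or convexity must be secured there regardless of which version you prove.
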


\subsection{Preliminary convergence results}\label{Sect3.2} 

This subsection presents the preliminary results of the proposed BPLADMM, 
including the descent property of the Lyapunov sequence and the convergence results of the iterative subsequence.

\begin{lemma}[Descent property of $\mathscr{L}_\rho$ with respect to the variables $x$, $u$, $c$ and $y$]\label{lem:3} 
Suppose that Assumptions~\ref{ass:1} and \ref{0902ass:1} hold. Then the reformulated AL function $\mathscr{L}_{\rho}$ satisfies
\begin{align*}
&\mathscr{L}_\rho(x_{k+1}, u_{k+1}, \lambda_k,c_{k+1},y_k) - \mathscr{L}_\rho(x_{k}, u_k, \lambda_k,c_k,y_{k-1}) \\
&\leq -\frac{\tau-2L_{h/{g^2}}L_g-L_\psi}{2} \|x_{k+1}-x_k\|^2-\frac{\rho\sigma_{\min}(M^\top M)-L_{\nabla f}}{2}\|u_{k+1}-u_k\|^2. 
\end{align*}
\end{lemma}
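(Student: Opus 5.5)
The plan is to split the difference into an $x$/$c$/$y$ part and a $u$ part:
\begin{align*}
&\mathscr{L}_\rho(x_{k+1},u_{k+1},\lambda_k,c_{k+1},y_k)-\mathscr{L}_\rho(x_k,u_k,\lambda_k,c_k,y_{k-1})\\
&= \big[\mathscr{L}_\rho(x_{k+1},u_{k+1},\lambda_k,c_{k+1},y_k)-\mathscr{L}_\rho(x_{k+1},u_k,\lambda_k,c_{k+1},y_k)\big]\\
&\quad+\big[\mathscr{L}_\rho(x_{k+1},u_k,\lambda_k,c_{k+1},y_k)-\mathscr{L}_\rho(x_k,u_k,\lambda_k,c_k,y_{k-1})\big].
\end{align*}
The first bracket gives the $u$-term and the second gives the $x$-term.

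\textbf{The $u$-part.} With everything else fixed, the $u$-objective in \cref{Equ:10} is
\[
f(u)-\langle\lambda_k,Mu\rangle+\tfrac{\rho}{2}\|F(x_{k+1})-Mu\|^2 .
\]
Its quadratic part is $\rho\sigma_{\min}(M^\top M)$-strongly convex by Assumption~\ref{ass:1}(vi), and $f$ has an $L_{\nabla f}$-Lipschitz gradient. At $u_{k+1}$ the first-order optimality condition gives
\[
\nabla f(u_{k+1})=M^\top\big(\lambda_k+\rho(F(x_{k+1})-Mu_{k+1})\big).
\]
I combine the strong-convexity inequality of the quadratic at $u_{k+1}$ with the descent lemma for $-f$, i.e.\ $f(u_k)\ge f(u_{k+1})+\langle\nabla f(u_{k+1}),u_k-u_{k+1}\rangle-\frac{L_{\nabla f}}{2}\|u_k-u_{k+1}\|^2$. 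This yields the decrease $\frac{\rho\sigma_{\min}(M^\top M)-L_{\nabla f}}{2}\|u_{k+1}-u_k\|^2$.

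\textbf{Rewriting the fractional block.} First, Fenchel--Young together with $c_k^2h(x_k)\ge0$ shows that replacing $y_{k-1}$ by $y_k$ at the point $x_k$ can only lower $\mathscr{L}_\rho$. Since $y_k\in\partial g(x_k)$, we have $g^*(y_k)=\langle x_k,y_k\rangle-g(x_k)$. With $c_k=1/g(x_k)$, this reduces the old value to
\[
f(u_k)+h(x_k)/g(x_k)+\psi(x_k,u_k,\lambda_k).
\]
At the new point, using the same identity for $g^*(y_k)$, the fractional block becomes
\[
2c_{k+1}h(x_{k+1})-c_{k+1}^2h(x_{k+1})\big(g(x_k)+\langle y_k,x_{k+1}-x_k\rangle\big).
\]

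\textbf{Using the $x$-subproblem.} I compare against \cref{1129Equ3.5} by testing the minimizer $x_{k+1}$ against the feasible point $x_k\in\mathcal{S}$. Using $D_{\phi_k}(x_{k+1},x_k)\ge\frac{\tau}{2}\|x_{k+1}-x_k\|^2$ from Lemma~\ref{1122lem:1}(iii), this gives
\[
c_kh(x_{k+1})-c_k^2h(x_k)\langle y_k,x_{k+1}-x_k\rangle+\widetilde\psi(x_{k+1};x_k)+\tfrac{\tau}{2}\|x_{k+1}-x_k\|^2\le c_kh(x_k)+\psi(x_k).
\]
Next, $\psi(x_{k+1})\le\widetilde\psi(x_{k+1};x_k)+\frac{L_\psi}{2}\|x_{k+1}-x_k\|^2$. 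I get this from Lemma~\ref{0902lem:2} via the descent lemma on $\bar{\mathcal{X}}_\Delta$, after checking that $\widetilde\psi$ agrees with the first-order expansion of $\psi$ at $x_k$ up to an $O(\|x_{k+1}-x_k\|^2)$ remainder absorbed in $L_\psi$.

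\textbf{Main obstacle.} The hardest step is converting the $c_k$-weighted, linearized fractional terms into the exact new block built with $c_{k+1}=1/g(x_{k+1})$. The target is to show that the new fractional block is at most
\[
c_kh(x_{k+1})-c_k^2h(x_k)\langle y_k,x_{k+1}-x_k\rangle-c_kh(x_k)+\frac{h(x_k)}{g(x_k)}+L_{h/g^2}L_g\|x_{k+1}-x_k\|^2 .
\]
I would proceed in two moves, each costing one $L_{h/g^2}L_g\|x_{k+1}-x_k\|^2$ term, which produces the factor $2$ in the stated constant.
\begin{enumerate}
\item Use convexity of $g$, namely $g(x_{k+1})\ge g(x_k)+\langle y_k,x_{k+1}-x_k\rangle$, together with the Lipschitz bound $|g(x_{k+1})-g(x_k)|\le L_g\|x_{k+1}-x_k\|$. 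This bounds $2c_{k+1}h_{k+1}-c_{k+1}^2h_{k+1}(\cdot)$ by $h(x_{k+1})/g(x_{k+1})$ minus a linear term.
\item Replace $h(x_{k+1})/g(x_{k+1})^2$ by $h(x_k)/g(x_k)^2$ in the linear term $\langle y_k,x_{k+1}-x_k\rangle$. The error is bounded by $L_{h/g^2}\|x_{k+1}-x_k\|\cdot\|y_k\|\,\|x_{k+1}-x_k\|$, and $\|y_k\|\le L_g$ by Lipschitz continuity of $g$, using Lemma~\ref{0902lem:1}(iii).
\end{enumerate}
If the direct algebra does not close, I would instead exploit that $c\mapsto 2ch-c^2hG$ is a concave quadratic maximized at $c=1/G$, and compare its values at $c_k$ and $c_{k+1}$.

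Summing the three estimates (the $u$-part, the $x$-subproblem inequality, and the fractional-block bound) yields the claimed inequality.
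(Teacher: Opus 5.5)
Your decomposition, the $u$-part, the Fenchel--Young swap $y_{k-1}\to y_k$, the test of \cref{1129Equ3.5} against $x_k$, and your Move~2 all match the paper's proof, and your target inequality for the fractional block is the right one. The gap is Move~1, whose inequality runs the wrong way. Write $d:=x_{k+1}-x_k$ and let $N$ be the new fractional block. Then
\[
N=\frac{h(x_{k+1})}{g(x_{k+1})}+c_{k+1}^2h(x_{k+1})\bigl(g(x_{k+1})-g(x_k)-\langle y_k,d\rangle\bigr).
\]
Convexity of $g$ says the bracket is nonnegative, so $N\ge h(x_{k+1})/g(x_{k+1})$. That is a lower bound, not the upper bound you need. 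The excess is only $O(\|d\|)$ for nonsmooth $g$: Lipschitz continuity bounds it by $2L_g\|d\|$, and for $g(x)=|x|+1$, $x_k=0$, $y_k=0$ it equals $|d|$. It therefore cannot be absorbed by $\frac{\tau}{2}\|d\|^2$, and any route through $h(x_{k+1})/g(x_{k+1})$ fails.

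What closes the argument is your fallback, and it should be the main step. Apply concavity of $c\mapsto 2ch-c^2hG$ with $h=h(x_{k+1})$ and $G=g(x_k)$; its maximizer is $c_k$. Explicitly,
\[
2c_{k+1}h(x_{k+1})-c_{k+1}^2h(x_{k+1})g(x_k)-c_kh(x_{k+1})=-c_kc_{k+1}^2h(x_{k+1})\bigl(g(x_{k+1})-g(x_k)\bigr)^2\le 0 .
\]
Hence $N\le c_kh(x_{k+1})-c_{k+1}^2h(x_{k+1})\langle y_k,d\rangle$ at no cost. This is exactly the identity the paper uses.

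Only Move~2 then costs $L_{h/g^2}L_g\|d\|^2$, so your bookkeeping is off. The factor $2$ in $\tau-2L_{h/g^2}L_g-L_\psi$ comes from writing $\frac{\tau}{2}-L_{h/g^2}L_g-\frac{L_\psi}{2}$ over a common denominator, i.e.\ from a single swap cost. Two moves each costing $L_{h/g^2}L_g\|d\|^2$ would give $\tau-4L_{h/g^2}L_g-L_\psi$. That contradicts both the statement and your own target inequality, which correctly contains one such term.

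A smaller point concerns the linearization step. $\widetilde\psi(x_{k+1},u_k,\lambda_k;x_k)$ equals the first-order expansion of $\psi(\cdot,u_k,\lambda_k)$ at $x_k$ plus $\frac{\rho}{2}\|\nabla F(x_k)d\|^2\ge 0$. So this remainder has a favorable sign and is simply dropped, as in the paper. Nothing is absorbed into $L_\psi$, and absorbing it would change the constant.
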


\begin{proof}
Using the definitions of $\mathscr{L}_\rho$, it directly follows that 
\begin{align*}
&\mathscr{L}_\rho(x_{k+1}, u_k, \lambda_k, c_{k+1}, y_k) + c_k h(x_k) + \psi(x_k, u_k, \lambda_k)- \mathscr{L}_\rho(x_k, u_k, \lambda_k,c_k,y_k)\\
&= 2c_{k+1} h(x_{k+1}) + c_{k+1}^2 h(x_{k+1}) (g^*(y_k) - \langle x_{k+1}, y_k \rangle) + \psi(x_{k+1}, u_k, \lambda_k) . 
\end{align*}
 For the $x$-subproblem~\eqref{1129Equ3.5}, we have 
 \[c_k h(x_{k+1}) + \psi(x_{k+1}, u_k, \lambda_k) -\langle c_k^2 h(x_k) y_k, x_{k+1} - x_k \rangle 
        \leq    c_k h(x_k) + \psi(x_k, u_k, \lambda_k)-D_{\phi}(x_{k+1}, x_k),\]
from which we deduce that 
\begin{align*}
& \mathscr{L}_\rho(x_{k+1}, u_k, \lambda_k, c_{k+1}, y_k) - \mathscr{L}_\rho(x_k, u_k, \lambda_k,c_k,y_k) \\
&\leq 2c_{k+1} h(x_{k+1}) + c_{k+1}^2 h(x_{k+1}) (g^*(y_k) - \langle x_{k+1}, y_k \rangle) - c_k h(x_{k+1})+\psi(x_{k+1},u_k,\lambda_k)\\
&\quad  + \langle c_k^2 h(x_k) y_k, x_{k+1} - x_k \rangle -  D_{\phi_k}(x_{k+1},x_k) -\widetilde\psi(x_{k+1},u_k,\lambda_k;x_k) \\
&= 2c_{k+1} h(x_{k+1}) + c_{k+1}^2 h(x_{k+1}) (g^*(y_k) - \langle x_{k+1}, y_k \rangle)  + \langle c_{k+1}^2 h(x_{k+1}) y_k, x_{k+1} - x_k \rangle \\
&\quad  + \langle c_k^2 h(x_k) y_k - c_{k+1}^2 h(x_{k+1}) y_k, x_{k+1} - x_k \rangle - c_k h(x_{k+1})- D_{\phi_k}(x_{k+1},x_k)\\
&\quad +\psi(x_{k+1},u_k,\lambda_k)-\widetilde\psi(x_{k+1},u_k,\lambda_k;x_k).
\end{align*}
This, together with Lemma~\ref{1122lem:1}~(iii), Lemma~\ref{0902lem:1}~(ii) and the bound $\|y\| \leq L_g$ (which follows from the Lipschitz continuity of $g(x)$ \cite[Corollary~1.81]{mordukhovich2006variational}), implies that 
\begin{align*}
& \mathscr{L}_\rho(x_{k+1}, u_k, \lambda_k, c_{k+1}, y_k) - \mathscr{L}_\rho(x_k, u_k, \lambda_k,c_k,y_k) \\
&\leq 2c_{k+1} h(x_{k+1}) + c_{k+1}^2 h(x_{k+1}) (g^*(y_k) - \langle x_{k+1}, y_k \rangle)  + \langle c_{k+1}^2 h(x_{k+1}) y_k, x_{k+1} - x_k \rangle \\
&\quad +\left( L_{{h}/{g^2}}  L_g - \frac{ \tau}{2} \right)  \| x_{k+1} - x_k \|^2\!-\! c_k h(x_{k+1})+\psi(x_{k+1},u_k,\lambda_k)-\widetilde\psi(x_{k+1},u_k,\lambda_k;x_k).
\end{align*}
We compute the right-hand side of the inequality in blocks. First,
\begin{align*}
&\psi(x_k,u_k,\lambda_k)-\widetilde\psi(x_{k+1},u_k,\lambda_k;x_k)
+\psi(x_{k+1},u_k,\lambda_k)-\psi(x_k,u_k,\lambda_k)\\
&= \left\langle \lambda_k,\,-\nabla F(x_k)(x_{k+1}-x_k)\right\rangle
+\frac{\rho}{2}\| F(x_k)-Mu_k\|^2+\psi(x_{k+1},u_k,\lambda_k)-\psi(x_k,u_k,\lambda_k)\\
&\quad -\frac{\rho}{2}\big\| F(x_k)+\nabla F(x_k)(x_{k+1}-x_k)-Mu_k \big\|^2\\
&= -\big\langle \nabla F(x_k)^\top \big(\lambda_k+\rho \big(F(x_k)-Mu_k\big)\big),\,x_{k+1}-x_k\big\rangle
-\frac{\rho}{2}\big\|\nabla F(x_k)(x_{k+1}-x_k)\big\|^2\\
&\quad +\psi(x_{k+1},u_k,\lambda_k)-\psi(x_k,u_k,\lambda_k)\\
&\leq \psi(x_{k+1},u_k,\lambda_k)-\psi(x_k,u_k,\lambda_k)
-\big\langle \nabla_x\psi(x_k,u_k,\lambda_k),\,x_{k+1}-x_k\big\rangle.
\end{align*}
By the Lipschitz continuity of $\nabla \psi$ from Lemma~\ref{0902lem:2}, we have \[  \psi(x_{k+1},u_k,\lambda_k)-\widetilde\psi(x_{k+1},u_k,\lambda_k;x_k)\leq \frac{L_\psi}{2}\|x_{k+1}-x_k\|^2.\]
Next,
\begin{align*}
&2c_{k+1} h(x_{k+1}) + c_{k+1}^2 h(x_{k+1}) (g^*(y_k) - \langle x_{k+1}, y_k \rangle) - c_k h(x_{k+1}) \\
&\quad + \langle c_{k+1}^2 h(x_{k+1}) y_k, x_{k+1} - x_k \rangle \\
&= c_k c_{k+1}^2 h(x_{k+1}) \left( 2g(x_k)g(x_{k+1}) - g(x_{k+1})^2 \right) + c_{k+1}^2 h(x_{k+1}) \left( g^*(y_k) - \langle x_k, y_k \rangle \right) \\
&= -c_k c_{k+1}^2 h(x_{k+1}) \left( g(x_{k+1}) - g(x_k) \right)^2 + c_{k+1}^2 h(x_{k+1}) \left( g(x_k) + g^*(y_k) - \langle x_k, y_k \rangle \right).
\end{align*}
Thus, the above quantity is nonpositive, implying that 
\[\mathscr{L}_\rho(x_{k+1}, u_k, \lambda_k,c_{k+1},y_k)+ \frac{\tau-2L_{{h}/{g^2}}L_g-L_\psi}{2}  \|x_{k+1}-x_k\|^2
\leq \mathscr{L}_\rho(x_k,u_k,\lambda_k,c_k,y_k).\]
Since \(\mathscr{L}_\rho(x_k,u_k,\lambda_k,c_k,y_k)\leq \mathscr{L}_\rho(x_k,u_k,\lambda_k,c_k,y_{k-1})\), we further infer that
\begin{align}\label{Equ:11}
&\mathscr{L}_\rho(x_{k+1},u_k,\lambda_k,c_{k+1},y_k)-\mathscr{L}_\rho(x_k,u_k,\lambda_k,c_k,y_{k-1}) \leq -\frac{\tau-2L_{{h}/{g^2}}L_g-L_\psi}{2} \|x_{k+1}-x_k\|^2.
\end{align}
Straightforward calculation gives
\begin{equation}\label{1008eq:2}
    \begin{aligned}
&\mathscr{L}_\rho(x_{k+1},u_{k+1},\lambda_k,c_{k+1},y_k)-\mathscr{L}_\rho(x_{k+1},u_k,\lambda_k,c_{k+1},y_{k}) \\
&= f(u_{k+1})-f(u_k)+\psi(x_{k+1},u_{k+1},\lambda_k)-\psi(x_{k+1},u_k,\lambda_k)\\
&=f(u_{k+1})-f(u_k)\!+\!\frac{\rho}{2}\left\langle  2F(x_{k+1})\!-\!M(u_{k+1}\!+\!u_k), M(u_k\!-\!u_{k+1})\right\rangle\!-\!\langle \lambda_k,M(u_{k+1}\!-\!u_k) \rangle\\
&=f(u_{k+1})\!-\!f(u_k)\!-\!\langle \lambda_k\!+\!\rho\left( F(x_{k+1})-Mu_{k+1}\right),M(u_{k+1}-u_k) \rangle\!-\!\frac{\rho}{2}\|M(u_{k+1}\!-\!u_k)\|^2.
\end{aligned}
\end{equation}
From the optimality condition of the $u$-subproblem, it follows that
\begin{equation}\label{1008eq:3}
    0=\nabla f(u_{k+1})-M^\top \left(\lambda_k+\rho\left(F(x_{k+1})-M u_{k+1}\right)\right).
\end{equation} 
Substituting \cref{1008eq:3} into \cref{1008eq:2}, we obtain
\begin{align*}
&\mathscr{L}_\rho(x_{k+1},u_{k+1},\lambda_k,c_{k+1},y_k)-\mathscr{L}_\rho(x_{k+1},u_k,\lambda_k,c_{k+1},y_{k}) \\
&=f(u_{k+1})-f(u_k)-\langle \nabla f(u_{k+1}),u_{k+1}-u_k \rangle-\frac{\rho}{2}\|M(u_{k+1}-u_k)\|^2.
\end{align*}
From Assumptions~\ref{ass:1}~(ii) and (vi) and the descent lemma in \cite{bertsekas1999nonlinear}, it follows that
\begin{align}\label{0429eq:1}
&\mathscr{L}_\rho(x_{k+1},u_{k+1},\lambda_k,c_{k+1},y_k)-\mathscr{L}_\rho(x_{k+1},u_k,\lambda_k,c_{k+1},y_{k}) \notag\\
&\leq -\frac{\rho\sigma_{\min}(M^\top M)-L_{\nabla f}}{2}\|u_{k+1}-u_k\|^2.
\end{align}
The combination of \cref{Equ:11} and \cref{0429eq:1} establishes the desired result. 
\end{proof}

\begin{lemma}[Descent property of $\mathscr{L}_\rho$ with respect to the multiplier $\lambda$]\label{lem:4}
Suppose that Assumptions~\ref{ass:1}~(ii),~(vi) and~(vii) hold. Then the reformulated AL function $\mathscr{L}_{\rho}$ satisfies
\begin{align*}
&\mathscr{L}_\rho(x_{k+1}, u_{k+1}, \lambda_{k+1},c_{k+1},y_k) - \mathscr{L}_\rho(x_{k+1}, u_{k+1}, \lambda_k,c_{k+1},y_k) \\
&\leq C_1\|u_{k+1}-u_k\|^2\!+C_2\left(\left\|M^\top(\lambda_{k}-\lambda_{k-1})\right\|^2-\left\|M^\top(\lambda_{k+1}-\lambda_{k})\right\|^2\right),
\end{align*}
where the constants $C_1$ and $C_2$ are defined as 
\begin{align*}
C_1 =\begin{cases}
\frac{L_{\nabla f}^2}{r \rho \sigma_{\min}\left(M^\top M\right)}, & 0<r\leq1,\\
\frac{ r L_{\nabla f}^2}{(2 - r)^2 \rho  \sigma_{\min}\left( M^\top M\right)},&1<r<2,
\end{cases} \ \ 
C_2 =\begin{cases}
\frac{1 - r}{r^2 \rho \sigma_{\min}\left(M^\top M\right)}, & 0<r\leq1,\\
\frac{r - 1}{(2 - r) r\rho  \sigma_{\min}\left(M^\top M\right)},&1<r<2.
\end{cases}
\end{align*}
\end{lemma}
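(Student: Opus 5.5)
The plan is to first compute the left-hand side exactly and then bound $\|\lambda_{k+1}-\lambda_k\|^2$ through the $u$-optimality conditions. Since $\mathscr{L}_\rho$ depends on $\lambda$ only through $\langle\lambda,F(x)-Mu\rangle$, the left-hand side equals
\[
\langle \lambda_{k+1}-\lambda_k, F(x_{k+1})-Mu_{k+1}\rangle = \frac{1}{r\rho}\|\lambda_{k+1}-\lambda_k\|^2,
\]
where the identity follows from the multiplier update \cref{Equ:12}. So it suffices to show
\[
\frac{1}{r\rho}\|\Delta_{k+1}\|^2 \le C_1\|u_{k+1}-u_k\|^2 + C_2\bigl(\|M^\top\Delta_k\|^2-\|M^\top\Delta_{k+1}\|^2\bigr),
\qquad \Delta_k:=\lambda_k-\lambda_{k-1}.
\]
The first reduction uses Lemma~\ref{0611lem:2}. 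By Assumption~\ref{ass:1}~(vii), $\lambda_0\in\Image(M)$, and the update, every $\lambda_k$ lies in $\Image(M)$, hence so does $\Delta_k$. Therefore
\[
\sigma_{\min}(M^\top M)\,\|\Delta_{k+1}\|^2\le \|M^\top\Delta_{k+1}\|^2 ,
\]
since $M$ has full column rank and so the smallest positive eigenvalue of $M^\top M$ is $\sigma_{\min}(M^\top M)$. It is then enough to control $\|M^\top\Delta_{k+1}\|^2$.

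The key identity comes from \cref{1008eq:3} and the update. Writing $\lambda_k+\rho(F(x_{k+1})-Mu_{k+1}) = \lambda_k + \frac{1}{r}\Delta_{k+1}$, we get
\[
\nabla f(u_{k+1}) = M^\top\Bigl(\lambda_k+\tfrac1r\Delta_{k+1}\Bigr),
\]
so that
\[
M^\top\Delta_{k+1} = r\nabla f(u_{k+1}) - rM^\top\lambda_k = r\nabla f(u_{k+1}) + (1-r)M^\top\lambda_k - M^\top\lambda_k .
\]
Taking the same relation at index $k$ and subtracting gives the recursion
\[
M^\top\Delta_{k+1} = (1-r)M^\top\Delta_k + r\bigl(\nabla f(u_{k+1})-\nabla f(u_k)\bigr).
\]
This is the standard over-relaxation recursion. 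At $k=0$, the previous-index quantities are undefined, so I would treat the first iteration by convention or restrict to $k\ge1$.

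The recursion is then split by convexity of the squared norm, which is the delicate step; the goal is to match the constants exactly.
\begin{itemize}
\item \textbf{Case $0<r\le1$.} Write the right-hand side as the convex combination $(1-r)\,M^\top\Delta_k + r\,(\nabla f(u_{k+1})-\nabla f(u_k))$. Jensen gives
\[
\|M^\top\Delta_{k+1}\|^2 \le (1-r)\|M^\top\Delta_k\|^2 + rL_{\nabla f}^2\|u_{k+1}-u_k\|^2 .
\]
Rearranging,
\[
r\|M^\top\Delta_{k+1}\|^2 \le (1-r)\bigl(\|M^\top\Delta_k\|^2-\|M^\top\Delta_{k+1}\|^2\bigr) + rL_{\nabla f}^2\|u_{k+1}-u_k\|^2 .
\]
Dividing by $r^2\rho\,\sigma_{\min}(M^\top M)$ yields exactly $C_1=L_{\nabla f}^2/(r\rho\sigma_{\min})$ and $C_2=(1-r)/(r^2\rho\sigma_{\min})$.
\item \textbf{Case $1<r<2$.} Write the right-hand side as
\[
(r-1)\bigl(-M^\top\Delta_k\bigr) + (2-r)\cdot\frac{r}{2-r}\bigl(\nabla f(u_{k+1})-\nabla f(u_k)\bigr),
\]
whose weights $r-1$ and $2-r$ sum to $1$. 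Jensen gives
\[
\|M^\top\Delta_{k+1}\|^2 \le (r-1)\|M^\top\Delta_k\|^2 + \frac{r^2}{2-r}L_{\nabla f}^2\|u_{k+1}-u_k\|^2 .
\]
Rearranging, $(2-r)\|M^\top\Delta_{k+1}\|^2 \le (r-1)\bigl(\|M^\top\Delta_k\|^2-\|M^\top\Delta_{k+1}\|^2\bigr)+\frac{r^2}{2-r}L_{\nabla f}^2\|u_{k+1}-u_k\|^2$. Dividing by $(2-r)r\rho\,\sigma_{\min}(M^\top M)$ gives $C_1=rL_{\nabla f}^2/((2-r)^2\rho\sigma_{\min})$ and $C_2=(r-1)/((2-r)r\rho\sigma_{\min})$, as claimed.
\end{itemize}
The main obstacle I expect is choosing this convex-combination split so that the constants come out exactly as stated, particularly for $r>1$. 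The image-of-$M$ argument needed to invoke Lemma~\ref{0611lem:2} is the other point requiring care.
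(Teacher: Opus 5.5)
Your proposal is correct and matches the paper's proof essentially step for step. It uses the same exact identity $\frac{1}{r\rho}\|\lambda_{k+1}-\lambda_k\|^2$ for the left-hand side and the same recursion $M^\top(\lambda_{k+1}-\lambda_k)=(1-r)M^\top(\lambda_k-\lambda_{k-1})+r(\nabla f(u_{k+1})-\nabla f(u_k))$ from the $u$-optimality condition, the same convex-combination splits with weights $(1-r,r)$ and $(r-1,2-r)$, and Lemma~\ref{0611lem:2} on $\Image(M)$, and your restriction to $k\ge 1$ is consistent with the paper, which handles $k=0$ separately in Lemma~\ref{0611lem:3} via $\lambda_{-1}=\lambda_0$ and $M^\top\lambda_0=\nabla f(u_0)$.
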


\begin{proof}
By the definition of the reformulation AL function $\mathscr{L}_\rho$ and \cref{Equ:12}, this implies that
\[\mathscr{L}_\rho(x_{k+1}, u_{k+1}, \lambda_{k+1},c_{k+1},y_k) - \mathscr{L}_\rho(x_{k+1}, u_{k+1}, \lambda_k,c_{k+1}, y_{k})  = \frac{1}{r\rho}\|\lambda_{k+1} - \lambda_{k}\|^2.\]
Subsequently, the first-order optimality condition for the $u$-subproblem yields $
0 = \nabla f(u_{k+1}) - M^\top\lambda_k - \rho M^\top (F(x_{k+1}) - Mu_{k+1})$.
Combining this with the dual update rule \cref{Equ:12} leads to 
\[0 = \nabla f(u_{k+1}) - M^\top\lambda_k - \frac{M^\top (\lambda_{k+1} - \lambda_k)}{r},\] which can be rearranged to
$
M^\top\lambda_{k+1} = r\nabla f(u_{k+1}) + (1 - r)M^\top \lambda_k.  
$
By shifting the index $k \to k-1$, we obtain $M^\top\lambda_{k} = r\nabla f(u_{k}) + (1 - r) M^\top\lambda_{k-1}$. 
Subtracting the two expressions yields
\begin{align}\label{Equ:15}
M^\top(\lambda_{k+1} - \lambda_{k} )= r(\nabla f(u_{k+1}) - \nabla f(u_{k})) + (1 - r) M^\top(\lambda_k - \lambda_{k-1}).
\end{align}
Since $\lambda_0 \in \Image(M)$, $F(\mathcal{S}) \subseteq \Image(M)$ and by \eqref{Equ:12}, we deduce that $\lambda_{k+1} - \lambda_k \in \Image(M)$. 
We consider the following two cases separately.

(i) Given $0<r\leq1$, the convexity of the squared Euclidean norm $\|\cdot\|^2$ implies that 
\begin{equation}\label{1008eq:4}
    \left\|M^\top\left( \lambda_{k+1} - \lambda_k \right)\right\|^2 \leq r \left\| \nabla f(u_{k+1}) - \nabla f(u_{k}) \right\|^2 + (1 - r) \left\|M^\top\left( \lambda_k - \lambda_{k-1}\right) \right\|^2.
\end{equation}
Rearranging \cref{1008eq:4} and applying Assumption~\ref{ass:1}~(vi) together with Lemma~\ref{0611lem:2} yield
\begin{align*}
r\sigma_{\min} \left(M^\top M\right) \| \lambda_{k+1} - \lambda_k \|^2 &\leq (1 - r)\left( \left\|M^\top( \lambda_k - \lambda_{k-1}) \right\|^2 - \left\| M^\top (\lambda_{k+1} - \lambda_k )\right\|^2 \right)\\
&\quad +r\|\nabla f(u_{k+1}) - \nabla f(u_{k}) \|^2.
\end{align*}
Using the Lipschitz continuity of $\nabla f$  yields 
\begin{align*}
\frac{1}{r \rho} \| \lambda_{k+1} - \lambda_k \|^2 &\leq 
\frac{1 - r}{r^2 \rho \sigma_{\min}\left(M^\top M\right)} \left( \left\|M^\top (\lambda_k - \lambda_{k-1} )\right\|^2 - \left\| M^\top(\lambda_{k+1} - \lambda_k )\right\|^2 \right)\\
&\quad +\frac{ L_{\nabla f}^2 }{r \rho \sigma_{\min}\left(M^\top M\right)} \| u_{k+1} - u_k \|^2.
\end{align*}

(ii) Given $1<r<2$ and the recurrence relation \cref{Equ:15}, the dual increment can be rewritten as 
\begin{equation}\label{1008eq;5}
    M^\top\left(\lambda_{k+1} - \lambda_{k}\right) = (2 - r) \frac {r}{2 - r}\left (\nabla f(u_{k+1}) - \nabla f(u_{k}) \right) + (r - 1)M^\top (\lambda_{k-1} - \lambda_{k}).
\end{equation}
By the convexity of the squared Euclidean norm, \cref{1008eq;5} implies
\[\left\| M^\top(\lambda_{k+1}\! - \!\lambda_k )\right\|^2 \!\leq\! (2 \!-\! r) \left\| \frac{r}{2- r} (\nabla f(u_{k+1})\! -\! \nabla f(u_{k})) \right\|^2 \!+\! (r\! -\! 1) \left\|M^\top( \lambda_{k} \!-\! \lambda_{k-1}) \right\|^2.\] Since $0 < 2 - r < 1$, it follows that
\begin{equation}\label{1008eq:6}
    \left\| M^\top(\lambda_{k+1} - \lambda_k ) \right\|^2 \leq \frac{r^2}{2 - r} \| \nabla f(u_{k+1}) - \nabla f(u_{k})\|^2 + (r - 1) \left\| M^\top( \lambda_{k} - \lambda_{k-1})\right\|^2.
\end{equation}
Rearranging the \cref{1008eq:6} and applying Assumption~\ref{ass:1}~(vi) together with Lemma~\ref{0611lem:2} yield
\begin{multline*}
(2 - r) \sigma_{\min}\left(M^\top M\right)\| \lambda_{k+1} - \lambda_k \|^2 \leq \frac{r^2}{2 - r} \| \nabla f(u_{k+1}) - \nabla f(u_{k})\|^2 \\ + (r - 1) \left( \|M^\top( \lambda_{k} - \lambda_{k-1}) \|^2 - \|M^\top( \lambda_{k+1} - \lambda_k) \|^2 \right),
\end{multline*}
which implies
\begin{align*}
\frac{1}{r\rho} \| \lambda_{k+1}\! -\! \lambda_k \|^2 &\leq \frac{r - 1}{(2 - r) r\rho  \sigma_{\min}\left(M^\top M\right)} \left( \left\|M^\top( \lambda_k - \lambda_{k-1}) \right\|^2 \!-\! \left\|M^\top( \lambda_{k+1} - \lambda_k)\right \|^2 \right)\\
&\quad +\frac{r L_{\nabla f}^2 }{(2 - r)^2 \rho  \sigma_{\min}\left(M^\top M\right)} \| u_{k+1} \!-\! u_k \|^2 .
\end{align*}
Combining both cases proves the result with the constants $C_1$ and $C_2$ defined above. 
\end{proof}

Combining Lemmas~\ref{lem:3} and \ref{lem:4}, we arrive at
\begin{align}\label{Equ:18}
&\mathscr{L}_\rho\left(z_{k+1},c_{k+1},y_k\right) \!-\! \mathscr{L}_\rho\left(z_k,c_k,y_{k-1}\right) \notag \\
&\leq -\frac{\tau-2L_{{h}/{g^2}}L_g-L_\psi}{2}\|x_{k+1} - x_k\|^2-\!C_2\!\left(\|M^\top(\lambda_{k+1} \!-\! \lambda_{k})\|^2 \!-\! \|M^\top(\lambda_{k} \!-\! \lambda_{k-1})\|^2\right) \notag \\
&\quad -\frac{\rho\sigma_{\min} (M^\top M)-L_{\nabla f}- 2C_1}{2}\|u_{k+1} - u_k\|^2 .
\end{align} 
However, we observe that it is difficult to establish the descent 
property of the reformulated AL function 
$\mathscr{L}_{\rho}(x,u,\lambda,c,y)$. 
To overcome this difficulty, we construct the following Lyapunov function: 
$
P(x,u,\lambda,c,y,\bar{\lambda}) 
= \mathscr{L}_{\rho}(x,u,\lambda,c,y) 
+2C_2\big\|M^\top(\lambda - \bar{\lambda})\big\|^2 
$.
We evaluate the Lyapunov function along the iterates $\{z_k\}_{k\geq 1}$ generated by Algorithm~\ref{alg:1} and set $P_k = P(z_k, c_k, y_{k-1},\lambda_{k-1})$ and $Z_k = (z_k, c_k, y_{k-1}, \lambda_{k-1})$, where $c_k = 1/g(x_k)$ and $y_{k-1} \in \partial g(x_{k-1})$ for all $k\geq 1$.

\begin{assumption}\label{1129ass:1}
The parameters $(\tau,\rho)$ of Algorithm~\ref{alg:1} are subject to the following conditions:
\begin{enumerate}[label={(\roman*)}]
\item $\tau$ satisfies $\tau>2L_{h/g^2}L_g+L_\psi$.
\item We define the parameter-dependent quantity $\rho_0$ as   \begin{align*}\begin{cases}
\frac{\left(1+\sqrt{1+\frac{16}{r}}\right)L_{\nabla f}}{2\sigma_{\min}(M^\top M)}, & 0 < r \leq 1, \\[4pt]
\frac{\left(1+\sqrt{1+\frac{16r}{(2-r)^2}}\right)L_{\nabla f}}{2\sigma_{\min}(M^\top M)}, & 1 < r < 2,
\end{cases}\end{align*} and require that $\rho > \rho_0$.
\end{enumerate}
\end{assumption}

\begin{lemma}\label{lem:6}
Suppose that Assumptions~\ref{ass:1}, \ref{0902ass:1}, and~\ref{1129ass:1} hold. Then for all $ k \geq 1 $, the Lyapunov function $ P_k $ satisfies
\begin{align*}
P_{k+1} - P_k \leq -T_1\|x_{k+1} - x_k\|^2 - T_2\|u_{k+1} - u_k\|^2 - T_3\|\lambda_{k+1} - \lambda_k\|^2,
\end{align*}
where 
\[ T_1 = \frac{\tau-2L_{{h}/{g^2}}L_g-L_\psi}{2} ,\,  T_2 = \frac{\rho\sigma_{\min}(M^\top M)-L_{\nabla f} -4C_1}{2} ,\,  T_3 =\frac {1}{r\rho}. \] It follows that the sequence $ \{P_k\}_{k \geq 1} $ is nonincreasing.
\end{lemma}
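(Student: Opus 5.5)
The plan is to combine Lemma~\ref{lem:3} and Lemma~\ref{lem:4}, but to use the exact identity for the multiplier step twice rather than once as in \cref{Equ:18}. That is what produces the negative $\|\lambda_{k+1}-\lambda_k\|^2$ term. Write $a_k:=\|M^\top(\lambda_k-\lambda_{k-1})\|^2$. Since $P_k=\mathscr{L}_\rho(z_k,c_k,y_{k-1})+2C_2a_k$, we have
\[
P_{k+1}-P_k=\big[\mathscr{L}_\rho(z_{k+1},c_{k+1},y_k)-\mathscr{L}_\rho(z_k,c_k,y_{k-1})\big]+2C_2(a_{k+1}-a_k).
\]
I split the $\mathscr{L}_\rho$-difference into two parts. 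The first is the $(x,u,c,y)$-part, bounded by Lemma~\ref{lem:3}. The second is the $\lambda$-part $\mathscr{L}_\rho(x_{k+1},u_{k+1},\lambda_{k+1},c_{k+1},y_k)-\mathscr{L}_\rho(x_{k+1},u_{k+1},\lambda_k,c_{k+1},y_k)$.

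By the proof of Lemma~\ref{lem:4}, the $\lambda$-part equals $\frac{1}{r\rho}\|\lambda_{k+1}-\lambda_k\|^2$ exactly. The key step is the trivial rewriting
\[
\tfrac{1}{r\rho}\|\lambda_{k+1}-\lambda_k\|^2=2\cdot\tfrac{1}{r\rho}\|\lambda_{k+1}-\lambda_k\|^2-\tfrac{1}{r\rho}\|\lambda_{k+1}-\lambda_k\|^2 .
\]
I then bound only the doubled term using Lemma~\ref{lem:4}, giving at most $2C_1\|u_{k+1}-u_k\|^2+2C_2(a_k-a_{k+1})$. The correction $2C_2(a_{k+1}-a_k)$ built into $P$ cancels the telescoping piece $2C_2(a_k-a_{k+1})$ exactly, so the sign of $C_2$ never matters. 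Adding the bound of Lemma~\ref{lem:3} then yields
\[
P_{k+1}-P_k\le -T_1\|x_{k+1}-x_k\|^2-\Big(\tfrac{\rho\sigma_{\min}(M^\top M)-L_{\nabla f}}{2}-2C_1\Big)\|u_{k+1}-u_k\|^2-\tfrac{1}{r\rho}\|\lambda_{k+1}-\lambda_k\|^2,
\]
which is the claimed inequality with $T_2=(\rho\sigma_{\min}(M^\top M)-L_{\nabla f}-4C_1)/2$ and $T_3=1/(r\rho)$.

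It remains to check that $T_1,T_2,T_3>0$, since this gives monotonicity of $\{P_k\}$. $T_3>0$ is trivial, and $T_1>0$ is Assumption~\ref{1129ass:1}(i). For $T_2$, set $s:=\rho\sigma_{\min}(M^\top M)$.
\begin{enumerate}
\item When $0<r\le1$, we have $4C_1=4L_{\nabla f}^2/(rs)$. Then $T_2>0$ is equivalent to $s^2-L_{\nabla f}s-4L_{\nabla f}^2/r>0$. The positive root of this quadratic is $s=\frac{L_{\nabla f}}{2}\big(1+\sqrt{1+16/r}\big)$, which is exactly $\rho_0\sigma_{\min}(M^\top M)$.
\item When $1<r<2$, the same computation with $4C_1=4rL_{\nabla f}^2/((2-r)^2s)$ gives the threshold $\frac{L_{\nabla f}}{2}\big(1+\sqrt{1+16r/(2-r)^2}\big)$.
\end{enumerate}
Hence $\rho>\rho_0$ from Assumption~\ref{1129ass:1}(ii) gives $T_2>0$, and $\{P_k\}_{k\ge1}$ is nonincreasing.

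The only genuinely nontrivial point is the doubling trick that turns the positive multiplier term into a negative one. This is exactly why the Lyapunov weight is $2C_2$, not $C_2$, and why $T_2$ carries $4C_1$ instead of $2C_1$. The remaining work is bookkeeping, where the main care needed is to match the index shifts in $a_k$ with the definition $P_k=P(z_k,c_k,y_{k-1},\lambda_{k-1})$.
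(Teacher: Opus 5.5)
Your proposal is correct and is essentially the paper's argument. The paper applies Lemma~\ref{lem:4} once inside \cref{Equ:18}, adds the $2C_2$ correction, splits $-\frac{\rho\sigma_{\min}(M^\top M)-L_{\nabla f}-2C_1}{2}$ into $-T_2-C_1$, and then uses Lemma~\ref{lem:4} a second time to turn $-C_1\|u_{k+1}-u_k\|^2-C_2\big(\|M^\top(\lambda_k-\lambda_{k-1})\|^2-\|M^\top(\lambda_{k+1}-\lambda_k)\|^2\big)$ into $-\frac{1}{r\rho}\|\lambda_{k+1}-\lambda_k\|^2$; this is algebraically identical to your doubling of the exact multiplier identity, and your explicit check that $\rho>\rho_0$ gives $T_2>0$ fills in a step the paper only asserts.
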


\begin{proof}
From \cref{Equ:18} and the definition of $P_k$, we immediately obtain  
\begin{align*}
&P_{k+1} - P_k\\
&= \mathscr{L}_\rho(z_{k+1},c_{k+1},y_k)  + 2C_2\left\|M^\top(\lambda_{k+1} - \lambda_k)\right\|^2 - \mathscr{L}_\rho(z_{k},c_k,y_{k-1}) \\
&\quad  - 2C_2\left\|M^\top(\lambda_{k} - \lambda_{k-1})\right\|^2 \\
&\leq -\frac{\tau-2L_{{h}/{g^2}}L_g-L_\psi}{2}\|x_{k+1} - x_k\|^2 -\frac{\rho\sigma_{\min}(M^\top M)-L_{\nabla f} -4C_1}{2}\|u_{k+1}-u_k\|^2\\
&\quad -C_1\|u_{k+1}-u_k\|^2-C_2\left(\left\|M^\top(\lambda_{k}-\lambda_{k-1})\right\|^2-\left\|M^\top(\lambda_{k+1}-\lambda_{k})\right\|^2\right).
\end{align*}
By virtue of Lemma~\ref{lem:4}, we establish the descent property of $P_k$. Under Assumption~\ref{1129ass:1}, we have $T_1, T_2 > 0$, and so, for all $k \geq 1$, $P_{k+1} - P_k \leq 0$.
\end{proof}

\begin{lemma}\label{0611lem:3}
Suppose that Assumptions~\ref{ass:1} and~\ref{1129ass:1} hold. Assume that for every multiplier value attained by the algorithm, the level set $\mathcal{Q}(\lambda) :=\{(x,u)\in \mathcal{S}\times\mathbb R^m: \mathcal{L}_\rho(x,u,\lambda)\leq \mathcal{L}_\rho(x_0,u_0,\lambda_0)\}$ is bounded with a bound uniform over those multiplier values. If $\{\lambda_k\}$ is bounded, $M^\top\lambda_0 =\nabla f(u_0)$, and $\tau >0$ is sufficiently large, then $\{(x_k,u_k)\}$ is bounded.
\end{lemma}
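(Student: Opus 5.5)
The plan is to show by induction on $k$ that $(x_k,u_k)\in\mathcal{Q}(\lambda_k)$ for every $k$, so that the uniform bound on the level sets gives boundedness of $\{(x_k,u_k)\}$. The main tool is the descent inequality of Lemma~\ref{lem:6} together with the inequality $\mathcal{L}_\rho(x,u,\lambda)\le\mathscr{L}_\rho(x,u,\lambda,1/g(x),y)$. This inequality holds with equality when $y\in\partial g(x)$.

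First I would link $P_k$ to $\mathcal{L}_\rho$. At iteration $k$ we have $c_k=1/g(x_k)$ but $y_{k-1}\in\partial g(x_{k-1})$. The Fenchel–Young inequality and $h\ge0$ give
\[
\mathcal{L}_\rho(x_k,u_k,\lambda_k)\le \mathscr{L}_\rho(x_k,u_k,\lambda_k,c_k,y_{k-1})\le P_k .
\]
Then Lemma~\ref{lem:6} yields $P_k\le P_1$ for all $k\ge1$.

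It remains to bound $P_1$ by $\mathcal{L}_\rho(x_0,u_0,\lambda_0)$. Here the hypotheses $M^\top\lambda_0=\nabla f(u_0)$ and ``$\tau$ sufficiently large'' enter. I would redo the one-step estimates of Lemmas~\ref{lem:3} and \ref{lem:4} at $k=0$, choosing $y_0\in\partial g(x_0)$. Since $c_0=1/g(x_0)$ and $y_0\in\partial g(x_0)$, the starting value $\mathscr{L}_\rho(x_0,u_0,\lambda_0,c_0,y_0)$ equals $\mathcal{L}_\rho(x_0,u_0,\lambda_0)$. The $x$- and $u$-steps decrease it by $T_1\|x_1-x_0\|^2+T_2'\|u_1-u_0\|^2$.

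For the $\lambda$-step I would use the identity $M^\top\lambda_1=r\nabla f(u_1)+(1-r)M^\top\lambda_0$ from the proof of Lemma~\ref{lem:4}. Combined with $M^\top\lambda_0=\nabla f(u_0)$, it gives $M^\top(\lambda_1-\lambda_0)=r(\nabla f(u_1)-\nabla f(u_0))$. Hence the increase $\frac1{r\rho}\|\lambda_1-\lambda_0\|^2$ and the extra Lyapunov term $2C_2\|M^\top(\lambda_1-\lambda_0)\|^2$ are both bounded by a constant times $L_{\nabla f}^2\|u_1-u_0\|^2$, using Lemma~\ref{0611lem:2} on $\Image(M)$. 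The condition $\rho>\rho_0$ of Assumption~\ref{1129ass:1} should absorb this constant into the $u$-decrease. This gives $P_1\le\mathcal{L}_\rho(x_0,u_0,\lambda_0)$. The convention $\lambda_{-1}:=\lambda_0$ can be used if the Lyapunov term at $k=0$ must be interpreted.

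The main obstacle is circularity. Lemma~\ref{lem:6} relies on Lemma~\ref{lem:3}, which uses Assumption~\ref{0902ass:1} through the constants $L_\psi$ and $L_{h/g^2}$, and these are only defined on a bounded region. So the induction must run on a fixed compact set that does not depend on the iterates. I would take $\mathcal{Q}^*$ to be the uniform bound set of the level sets, and use it together with the given bound on $\{\lambda_k\}$ and a $\Delta$-enlargement to define $L_\psi$, $L_{h/g^2}$ and $m_g$ on this set. I would then choose $\tau$ larger than $2L_{h/g^2}L_g+L_\psi$ computed on this set; this is where ``$\tau$ sufficiently large'' is used.

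In the induction step, assume $(x_j,u_j)\in\mathcal{Q}^*$ for $j\le k$. Strong convexity of $\phi_k$ with large $\tau$ makes $\|x_{k+1}-x_k\|$ small, since the subproblem is a prox step with modulus $\tau$ and the gradients involved are bounded on $\mathcal{Q}^*$. Thus $x_{k+1}$ stays in the $\Delta$-enlargement where the Lipschitz estimates hold. The $u$-step then lies in a bounded set because $\nabla f$ is Lipschitz, $M$ has full column rank, and $\rho>\rho_0$. The descent chain above gives $\mathcal{L}_\rho(x_{k+1},u_{k+1},\lambda_{k+1})\le\mathcal{L}_\rho(x_0,u_0,\lambda_0)$, so $(x_{k+1},u_{k+1})\in\mathcal{Q}(\lambda_{k+1})\subseteq\mathcal{Q}^*$. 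This closes the induction.
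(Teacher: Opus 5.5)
Your proposal is correct and follows essentially the same route as the paper. Both arguments run an induction over a fixed compact set built from the uniformly bounded level sets, with a $\Delta$-enlargement. A large $\tau$ keeps $\|x_{k+1}-x_k\|\le\sqrt{\Delta}$, so the one-step estimates of Lemmas~\ref{lem:3} and~\ref{lem:6} apply, and Fenchel--Young then places $(x_{k+1},u_{k+1})$ in $\mathcal{Q}(\lambda_{k+1})$. Your first-step identity $M^\top(\lambda_1-\lambda_0)=r(\nabla f(u_1)-\nabla f(u_0))$ is exactly what the paper obtains by setting $\lambda_{-1}=\lambda_0$ and $y_{-1}=y_0$, so that the multiplier recurrence and Lemma~\ref{lem:4}'s estimate hold at $k=0$.
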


\begin{proof}
Set $y_{-1} =y_0\in \partial g(x_0)$, $\lambda_{-1} =\lambda_0$, and $P_0 :=P(z_0,c_0,y_{-1},\lambda_{-1})$. Then $P_0 =\mathcal{L}_\rho(x_0,u_0,\lambda_0)$. Moreover, $M^\top\lambda_0 =\nabla f(u_0)$ implies $M^\top\lambda_0 =r\nabla f(u_0) +(1 -r)M^\top\lambda_{-1}$. Thus, the multiplier recurrence and the resulting estimate in the proof of Lemma~\ref{lem:4} also hold for $k =0$.

Let $\mathcal{Q}_{x,u} :=\operatorname{cl}\big(\bigcup_{k\geq0}\mathcal{Q}(\lambda_k)\big)$. By the uniform boundedness of the level sets $\mathcal{Q}(\lambda_k)$, $\mathcal{Q}_{x,u}$ is compact. We prove by induction that $(x_k,u_k)\in \mathcal{Q}_{x,u}$ and $P_0\geq P_1\ge\cdots\geq P_k$. The assertion is immediate for $k =0$, since $(x_0,u_0)\in \mathcal{Q}(\lambda_0)\subseteq \mathcal{Q}_{x,u}$.

Suppose that the assertion holds up to some $k\geq0$. By \cref{1129Equ3.5} and expanding $\widetilde\psi$, it follows that $c_kh(x_{k+1})-\langle c_k^2h(x_k)y_k,x_{k+1}-x_k\rangle+\langle \lambda_k,\ell_F(x_{k+1};x_k)-Mu_k\rangle+\frac{\rho}{2}\big\|\ell_F(x_{k+1};x_k)-Mu_k\big\|^2+D_{\phi_k} (x_{k+1},x_k)\leq c_kh(x_k)+\psi(z_k)$.
Since $D_{\phi_k}(x_{k+1},x_k)\geq \frac{\tau}{2}\|x_{k+1}-x_k\|^2$ and $\langle c_k^2h(x_k)y_k,x_{k+1}-x_k\rangle\leq \|c_k^2h(x_k)y_k\|\cdot\|x_{k+1}-x_k\|$, we obtain
\begin{align*}
c_kh(x_{k+1})+&\frac{\rho}{2}\Big\|F(x_k)+\nabla F(x_k)(x_{k+1}-x_k)-Mu_k+\frac{\lambda_k}{\rho}\Big\|^2+\frac{\tau}{2}\|x_{k+1}-x_k\|^2\\
&-\|c_k^2h(x_k)y_k\|\cdot\|x_{k+1}-x_k\|\leq c_kh(x_k)+\psi(z_k)+\frac{\|\lambda_k\|^2}{2\rho}.
\end{align*}
Since $c_k h(x_{k+1}) \geq 0$, the first two terms on the left-hand side are nonnegative. Hence,
\begin{align*}
\frac{\tau}{2}\|x_{k+1}-x_k\|^2-\|c_k^2h(x_k)y_k\|\cdot\|x_{k+1}-x_k\|\leq c_kh(x_k)+|\psi(z_k)|+\frac{\|\lambda_k\|^2}{2\rho}.
\end{align*}
Setting $t_x:=\|x_{k+1}-x_k\|$, we obtain
\begin{align}\label{0905eq:1}
t_x \leq \frac{1}{\tau}\left(\|c_k^2 h(x_k) y_k\| + \sqrt{\|c_k^2 h(x_k) y_k\|^2 + 2\tau\left(c_k h(x_k)+\left|\psi(z_k)\right|+\frac{\|\lambda_k\|^2}{2\rho}\right)}\right).
\end{align}

Let $\mathcal{Q}^x$ and $\mathcal{Q}^u$ denote the projections of $\mathcal{Q}_{x,u}$ onto $\mathbb R^n$ and $\mathbb R^m$, respectively. Since $g$ is continuous and strictly positive on the compact set $\mathcal{Q}^x$, there exists a sufficiently small $\Delta>0$ such that $g$ is bounded away from zero on $\mathcal{Q}_\Delta^x:=\{x\in \mathbb R^n:\dist(x,\mathcal{Q}^x)^2\leq\Delta\}$ and $\mathcal{Q}_\Delta^x$ is contained in the neighborhood on which $h$ and $g$ are Lipschitz continuous. By the assumptions on $h$, $g$, and $F$, the corresponding constants in Lemmas~\ref{0902lem:1} and~\ref{0902lem:2} can be defined uniformly on the compact sets constructed from $\mathcal{Q}_\Delta^x$, $\mathcal{Q}^u$, and the bounded sequence $\{\lambda_k\}$. In particular, there exist $A,B>0$ such that $\|c_k^2 h(x_k)y_k\|\leq A$ and $c_k h(x_k) +|\psi(z_k)| +\frac{\|\lambda_k\|^2}{2\rho}\leq B$. Hence, $t_x \leq \frac{1}{\tau}\left(A+\sqrt{A^2+2\tau B}\right)$. If $\tau\geq\frac{2A}{\sqrt{\Delta}}+\frac{2B}{\Delta}$, then $\|x_{k+1}-x_k\|\leq\sqrt{\Delta}$, and hence $x_{k+1}\in \mathcal{Q}_\Delta^x$. Consequently, the estimates in the proofs of Lemmas~\ref{lem:3} and~\ref{lem:6} apply at iteration $k$, and we obtain $P_{k+1}\leq P_k$.

By the definition of $P_{k+1}$, $c_{k+1}=1/g(x_{k+1})$, and the Fenchel--Young inequality, $\mathcal{L}_\rho(x_{k+1},u_{k+1},\lambda_{k+1})\leq P_{k+1}\leq P_0=\mathcal{L}_\rho(x_0,u_0,\lambda_0)$. Thus, $(x_{k+1},u_{k+1})\in \mathcal{Q}(\lambda_{k+1})\subseteq \mathcal{Q}_{x,u}$. This completes the induction. Since $\mathcal{Q}_{x,u}$ is bounded, the sequence $\{(x_k,u_k)\}$ is bounded.
\end{proof}

\begin{remark}
Assumption~\ref{0902ass:1} is standard in nonconvex nonsmooth optimization \cite{bolte2018nonconvex,boct2017proximal,bourkhissi2025convergence}. The augmented Lagrangian’s coercivity or level-boundedness ensures bounded primal iterates. Dual iterate boundedness remains open \cite{hallak2023adaptive} and is not addressed in our work.
\end{remark}

\begin{lemma}[Bound for $\partial P(Z_{k+1})$]\label{lem:7} 
Suppose that Assumptions~\ref{ass:1} and \ref{0902ass:1} hold. Then, there exists a subgradient $D_{k+1} = (d_1, d_2, \dots, d_6) \in \partial P(Z_{k+1})$ satisfying
$
\|D_{k+1}\| \leq T_4 \|z_{k+1} - z_k\|,
$
where
\[
T_4:=\frac{3L_gL_h}{m_g^2}+L_{\nabla\phi}+L_gL_{h/g^2}+\rho M_{\nabla F}^2+L_\psi+M_{h/g^2}+4M_{h/g^2}M_gL_g+\|M\|+\frac{1}{r\rho}+8C_2\|MM^\top\|.
\]
\end{lemma}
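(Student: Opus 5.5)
We bound one specific element of $\partial P(Z_{k+1})$, with $Z_{k+1}=(x_{k+1},u_{k+1},\lambda_{k+1},c_{k+1},y_k,\lambda_k)$, block by block, using the optimality conditions of the three steps of Algorithm~\ref{alg:1}.

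\textbf{The $x$-block.} This is the only nonsmooth block. Since $h$ and $\delta_\mathcal{S}$ are regular, $c_{k+1}>0$, and the remaining terms are locally Lipschitz or $C^1$, we invoke Lemma~\ref{lem:2} and Lemma~\ref{lem:new1}~(ii). Taking $\varphi_l=h$, $\varphi_c=c_{k+1}^2 g^*(y_k)$ (constant in $x$) and $\varphi_r=-c_{k+1}^2\langle x,y_k\rangle$, we obtain a subset description
\[
\partial_x P \supseteq \partial\delta_\mathcal{S}(x_{k+1})+\bigl(2c_{k+1}+c_{k+1}^2(g^*(y_k)-\langle x_{k+1},y_k\rangle)\bigr)\partial h(x_{k+1})-c_{k+1}^2h(x_{k+1})y_k+\nabla_x\psi(z_{k+1}).
\]
For the argument below we use the form that keeps the full coefficient $2c_{k+1}+c_{k+1}^2(g^*(y_k)-\langle x_{k+1},y_k\rangle)$ on $\partial h$. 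The optimality condition of \eqref{1129Equ3.5} gives
\[
0\in\partial\delta_\mathcal{S}(x_{k+1})+c_k\,\xi-c_k^2h(x_k)y_k+\nabla F(x_k)^\top\bigl(\lambda_k+\rho(\ell_F(x_{k+1};x_k)-Mu_k)\bigr)+\nabla\phi_k(x_{k+1})-\nabla\phi_k(x_k)
\]
for some $\xi\in\partial h(x_{k+1})$ with $\|\xi\|\le L_h$. We substitute this into the expression for $\partial_xP$ to define $d_1$, and then bound the resulting differences one at a time:
\begin{enumerate}[label=(\alph*)]
\item The coefficient mismatch on $\xi$ equals $2c_{k+1}+c_{k+1}^2(g^*(y_k)-\langle x_{k+1},y_k\rangle)-c_k$. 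Using $g^*(y_k)=\langle x_k,y_k\rangle-g(x_k)$, this becomes $2c_{k+1}-c_{k+1}^2g(x_k)-c_k-c_{k+1}^2\langle x_{k+1}-x_k,y_k\rangle$. This is $O(|g(x_{k+1})-g(x_k)|+L_g\|x_{k+1}-x_k\|)$, which produces the $3L_gL_h/m_g^2$ and $M_{h/g^2}$-type terms.
\item The term $c_{k+1}^2h(x_{k+1})y_k-c_k^2h(x_k)y_k$ is bounded by $L_{h/g^2}L_g\|x_{k+1}-x_k\|$ using Lemma~\ref{0902lem:1}~(iii).
\item The Bregman term contributes $L_{\nabla\phi}\|x_{k+1}-x_k\|$.
\item The gap between $\nabla_x\psi(z_{k+1})$ and the linearized multiplier term splits into two pieces. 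The first is $\nabla_x\psi(x_{k+1},u_{k+1},\lambda_{k+1})-\nabla_x\psi(x_k,u_k,\lambda_k)$, controlled by $L_\psi\|z_{k+1}-z_k\|$ via Lemma~\ref{0902lem:2}. The second is $\rho\nabla F(x_k)^\top(\ell_F(x_{k+1};x_k)-F(x_k))$, controlled by $\rho M_{\nabla F}^2\|x_{k+1}-x_k\|$.
\end{enumerate}

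\textbf{The smooth blocks.}
\begin{enumerate}[label=(\alph*)]
\item For $u$: by \eqref{1008eq:3},
\[
d_2=\nabla f(u_{k+1})-M^\top(\lambda_{k+1}+\rho(F(x_{k+1})-Mu_{k+1}))=-M^\top(\lambda_{k+1}-\lambda_k),
\]
which is bounded by $\|M\|\|\lambda_{k+1}-\lambda_k\|$.
\item For $\lambda$: we have $d_3=F(x_{k+1})-Mu_{k+1}+4C_2MM^\top(\lambda_{k+1}-\lambda_k)$, where the first term equals $(\lambda_{k+1}-\lambda_k)/(r\rho)$.
\item For $\bar\lambda$: $d_6=-4C_2MM^\top(\lambda_{k+1}-\lambda_k)$. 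Together with $d_3$, this produces the $8C_2\|MM^\top\|$ term.
\item For $c$: $d_4=2h(x_{k+1})+2c_{k+1}h(x_{k+1})(g^*(y_k)-\langle x_{k+1},y_k\rangle)$. Substituting $g^*(y_k)$ as above gives
\[
d_4=2c_{k+1}h(x_{k+1})\bigl(g(x_{k+1})-g(x_k)-\langle x_{k+1}-x_k,y_k\rangle\bigr),
\]
which is bounded by $4M_{h/g^2}M_gL_g\|x_{k+1}-x_k\|$ after using $c_{k+1}h\le M_{h/g^2}M_g$.
\item For $y$: we use $x_k\in\partial g^*(y_k)$ to take $d_5=c_{k+1}^2h(x_{k+1})(x_k-x_{k+1})$ in $c_{k+1}^2h(x_{k+1})\bigl(\partial g^*(y_k)-x_{k+1}\bigr)$. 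This has norm at most $M_{h/g^2}\|x_{k+1}-x_k\|$.
\end{enumerate}

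Summing all the blocks and using $\|x_{k+1}-x_k\|,\|u_{k+1}-u_k\|,\|\lambda_{k+1}-\lambda_k\|\le\|z_{k+1}-z_k\|$ gives $\|D_{k+1}\|\le T_4\|z_{k+1}-z_k\|$.

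\textbf{Main obstacle.} The delicate point is justifying that the constructed vector genuinely lies in the limiting subdifferential $\partial P(Z_{k+1})$ of the joint function, not merely in the product of partial subdifferentials. The $x$-part involves the product $c^2h(x)(g^*(y)-\langle x,y\rangle)$, coupled with $y$ through the nonsmooth $g^*$. I would handle it through the Fréchet inclusion of Lemma~\ref{lem:2}, applied with the other blocks frozen at their current values. The remaining variables enter either smoothly or through separable convex terms ($g^*$ in $y$, multiplied by the positive constant $c_{k+1}^2h(x_{k+1})$). This yields a Fréchet, and hence limiting, subgradient of the full function at $Z_{k+1}$. The case $h(x_{k+1})=0$ must be checked separately, and it is trivial there.
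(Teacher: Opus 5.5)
Your block estimates are exactly those of the paper: the coefficient mismatch giving $3L_gL_h/m_g^2$, then $L_gL_{h/g^2}$, $L_{\nabla\phi}$, $L_\psi+\rho M_{\nabla F}^2$, and the bounds on $d_2,\dots,d_6$. So the inequality itself is fine. The gap is in the step you flagged yourself: showing that the assembled vector lies in the \emph{joint} subdifferential $\partial P(Z_{k+1})$.

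Fréchet subgradients computed ``with the other blocks frozen'' are only partial subgradients, and in general they do not combine into a joint one. Your reason why they do here is not correct. The factor multiplying $g^*(y)$ is not the constant $c_{k+1}^2h(x_{k+1})$ but $c^2h(x)$. So $c^2h(x)g^*(y)$ couples a nonsmooth function of $(x,c)$ with a nonsmooth function of $y$, and nothing is separable.

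What you need, and what the paper does, is Lemma~\ref{lem:2} applied once in the joint variable $(x,c,y)$, with $\varphi_l=c^2h(x)$, $\varphi_c=g^*(y)$, $\varphi_r=2/c-\langle x,y\rangle$. Its conclusion is precisely the freezing you describe. It delivers the $h$-part of $d_1$, together with $d_4$ and $d_5$, as a single Fréchet subgradient of $\Phi$. For the $(x,c)$-part of $\widehat\partial\Psi$, write $c^2h(x)=c_{k+1}^2h(x)+(c^2-c_{k+1}^2)h(x_{k+1})+O(|c-c_{k+1}|\,\|x-x_{k+1}\|)$, using the Lipschitz continuity of $h$. Then combine $\widehat\partial\Phi+\widehat\partial\delta_\mathcal{S}\subseteq\widehat\partial(\Phi+\delta_\mathcal{S})$, the regularity of $\mathcal{S}$ (for the normal vector coming from the $x$-subproblem), and the smoothness of the remaining terms to get $D_{k+1}\in\partial P(Z_{k+1})$.

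The second missing check is the sign of the frozen coefficient of $h$. With your split $\varphi_l=h$, $\varphi_c=c_{k+1}^2g^*(y_k)$, that coefficient in $\Psi$ is $c_{k+1}^2\bigl(\langle x_k-x_{k+1},y_k\rangle-g(x_k)\bigr)$. This is negative once $x_{k+1}$ is close to $x_k$. In that case $\widehat\partial\Psi(x_{k+1})=\emptyset$ whenever $h$ is not differentiable at $x_{k+1}$. Adding $2c_{k+1}h$ afterwards through the Fréchet sum rule cannot then recover $\bigl(2c_{k+1}+c_{k+1}^2(g^*(y_k)-\langle x_{k+1},y_k\rangle)\bigr)\partial h(x_{k+1})$.

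You have to put the $2c_{k+1}h$ term inside the product; the paper does this through the $2/c$ in $\varphi_r$. Then verify by Fenchel--Young that $c_{k+1}^2\bigl(g^*(y_k)+2/c_{k+1}-\langle x_{k+1},y_k\rangle\bigr)\ge 2c_{k+1}-c_{k+1}^2g(x_{k+1})=c_{k+1}>0$.

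With these two repairs your argument is the paper's proof. No separate treatment of $h(x_{k+1})=0$ is needed. Also, the $M_{h/g^2}$ term in $T_4$ comes from $d_5$, not from the coefficient mismatch.
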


\begin{proof}
First, we define
\(\Phi := c^2 h(x) \left( g^*(y) + {2}/{c} - \langle x, y \rangle \right).
\)
Then, the function \( P(x, u, \lambda, c, y, \bar{\lambda}) \) is expressed as
\[
P(x, u, \lambda, c, y, \bar{\lambda}) = \Phi+\delta_{\mathcal{S}}(x) + f(u) + \psi(x, u, \lambda) + 2C_2 \left\| M^\top (\lambda - \bar{\lambda}) \right\|^2,
\]
which leads to
\begin{align}\label{equ:1}
\partial P(Z_{k+1})=
\begin{pmatrix}
\partial_x (\Phi+\delta_{\mathcal{S}})(x) + \nabla_x \psi(x_{k+1}, u_{k+1}, \lambda_{k+1}) \\
\nabla_u \psi(x_{k+1}, u_{k+1}, \lambda_{k+1}) + \nabla f(u_{k+1})  \\
\nabla_\lambda \psi(x_{k+1}, u_{k+1}, \lambda_{k+1}) + 4C_2 M M^\top (\lambda_{k+1} - \lambda_k) \\
\partial_c \Phi \\
\partial_y \Phi\\
4C_2 M M^\top (\lambda_k - \lambda_{k+1})
\end{pmatrix}.
\end{align}
To make use of Lemma~\ref{lem:2}, we define three auxiliary functions: $\varphi_l := c^2 h(x)$, $\varphi_c := g^*(y)$, and $\varphi_r := 2/c - \langle x, y \rangle$. Direct substitution yields the expression $\Phi = \varphi_l(\varphi_c + \varphi_r)$. Relying on this decomposition together with Lemma~\ref{lem:2}, it immediately follows that
\begin{align}\label{equ:2}
\widehat\partial \Phi\supseteq
\begin{pmatrix}
\widehat\partial_x \Psi,\,
0,\,
0,\,
\widehat\partial_c \Psi,\,
\widehat\partial_y \Psi + c_{k+1}^2 h(x_{k+1}) \partial g^*(y_k),\,
0
\end{pmatrix}^\top,
\end{align}
where $\Psi$ is defined as $\Psi:=(g^*(y_k)+2/c_{k+1}-\langle x_{k+1},y_k \rangle)c^2h(x) +c_{k+1}^2h(x_{k+1})(2/c-\langle  x,y \rangle)$.
Note that $(g^*(y_k)+2/c_{k+1}-\langle x_{k+1},y_k \rangle)c_{k+1}^2\geq (2/c_{k+1}-g(x_{k+1}))c_{k+1}^2=c_{k+1}>0$.
By virtue of \cref{equ:1}, \cref{equ:2}, the regularity of $h$ and $\delta_{\mathcal{S}}$, 
as well as the subdifferential inclusion
$
\partial (\Phi+\delta_\mathcal{S})\supseteq \widehat\partial (\Phi+\delta_\mathcal{S})\supseteq \widehat\partial \Phi+\widehat\partial \delta_{\mathcal{S}}
$
derived from \cite[Corollary 10.9]{Rockafellar2004var}, 
we conclude that for $D_{k+1} = (d_1,d_2,\dots,d_6)\in \partial P(Z_{k+1})$, we have
\begin{align*}
\left\{\begin{aligned}
d_1\in& (2c_{k+1}+c_{k+1}^2(g^*(y_k)-\langle x_{k+1},y_k \rangle))\partial h(x_{k+1})-c_{k+1}^2h(x_{k+1})y_k\\
&+\partial \delta_\mathcal{S}(x_{k+1})+\nabla_{x}\psi(x_{k+1},u_{k+1},\lambda_{k+1}),\nonumber\\
d_2=&\nabla_u\psi(x_{k+1},u_{k+1},\lambda_{k+1})+\nabla f(u_{k+1}),\\
d_3=&F(x_{k+1})-Mu_{k+1}+4C_2MM^\top(\lambda_{k+1}-\lambda_{k}),\\
d_4=&2h(x_{k+1})+2c_{k+1}h(x_{k+1})(g^*(y_k)-\langle x_{k+1},y_k \rangle),\\
d_5\in&c_{k+1}^2h(x_{k+1})(\partial g^*(y_k)-x_{k+1}),\\
d_6=&4C_2MM^\top(\lambda_{k}-\lambda_{k+1}).
\end{aligned}\right.
\end{align*}
From the $x$-subproblem, there exists $d_h\in \partial h(x_{k+1})$ and $d_\delta\in \partial \delta_\mathcal{S}(x_{k+1})$ such that
$ d_h=\!-\!g(x_k)\left(\nabla_x \widetilde \psi(x_{k+1},u_{k},\lambda_{k};x_k)\!+d_\delta\!-\!c_k^2 h(x_k)y_k+\!\nabla {\phi_k}(x_{k+1})\!-\!\nabla{\phi_k}(x_k)\right)$. 
It follows that there exists $d_1$, such that
\begin{align*}
d_1 &= \left( 2c_{k+1} + c_{k+1}^2 \left( g^*(y_k) \!-\! \langle x_{k+1}, y_k \rangle \right) \right) d_h \! +\! \nabla_x \psi(x_{k+1}, u_{k+1}, \lambda_{k+1})\!-\! c_{k+1}^2 h(x_{k+1}) y_k +d_\delta\\
&= \left( 2c_{k+1} + c_{k+1}^2 \left( g^*(y_k) - \langle x_{k+1}, y_k \rangle \right)- c_k  \right) d_h + \nabla_x \psi(x_{k+1}, u_{k+1}, \lambda_{k+1}) \\
&\quad -\nabla_x \widetilde\psi(x_{k+1}, u_k, \lambda_k;x_k) -  (\nabla \phi_k(x_{k+1})\!-\!\nabla\phi_k(x_k)) \!+\! y_k \left( c_k^2 h(x_k) \!-\! c_{k+1}^2 h(x_{k+1}) \right).
\end{align*}
Note that $g^*(y_k) = -g(x_k) + \langle x_k, y_k \rangle$. It follows that
\begin{align}\label{1127eq:3}
&\left\| 2c_{k+1} + c_{k+1}^2 \left( g^*(y_k) - \langle x_{k+1}, y_k \rangle  \right)- c_k \right\| \notag \\
&= \left\| 2c_{k+1} - c_k - c_{k+1}^2 g(x_k) + \langle x_k - x_{k+1}, c_{k+1}^2 y_k \rangle \right\| \notag \\
&\leq \left\| 2c_{k+1} - c_k - c_{k+1}^2 g(x_k)\right\| +\left\| \langle x_k - x_{k+1}, c_{k+1}^2 y_k \rangle \right\| \notag \\
&\leq \left\| c_{k+1} - c_k \right\|+\left\|c_{k+1}^2g(x_{k+1})-c_{k+1}^2 g(x_k)\right\| +\left\| \langle x_k - x_{k+1}, c_{k+1}^2 y_k \rangle \right\| \notag \\
&\leq \frac{3 L_g}{m_g^2} \| x_{k+1} - x_k \|,
\end{align}
where the last inequality follows from Lemma~\ref{0902lem:1} and the fact that $\|y_k\| \leq L_g$. On the other hand, by the definition of $\nabla_x \widetilde\psi(x_{k+1}, u_k, \lambda_k;x_k) $, it holds that
\begin{align*}
&\left\|\nabla_x \psi(x_{k+1}, u_{k+1}, \lambda_{k+1}) 
- \nabla_x \widetilde\psi(x_{k+1}, u_k, \lambda_k;x_k)\right\|\\
&= \left\| \nabla_x \psi(x_{k+1}, u_{k+1}, \lambda_{k+1}) 
- \nabla_x \psi(x_{k}, u_{k}, \lambda_{k})-\rho\nabla F(x_k)^\top \nabla F(x_k)(x_{k+1}-x_k)   \right\|   \\
&\leq L_\psi\left\|z_{k+1}-z_k \right\|+\rho M_{\nabla F}^2\left\|x_{k+1}-x_k\right\|, 
\end{align*}
where the last inequality is due to Lemma~\ref{0902lem:1}~(ii) and Lemma~\ref{0902lem:2}. Moreover, $d_h \in \partial h(x_{k+1})$, and, as $h$ is Lipschitz continuous, it satisfies $\|d_h\| \leq L_h$. 
Combining the above results, \cref{1127eq:3}, and the uniform Lipschitz continuity of $\nabla \phi_k$ leads to
\begin{align}\label{Eq:d1}
\|d_1\|\leq \left(\frac{3L_g L_h}{m_g^2}+ L_{\nabla \phi}+L_gL_{h/g^2}\right)\|x_{k+1}-x_k\|+\rho M_{\nabla F}^2\|x_{k+1}-x_k\| +L_\psi\|z_{k+1}-z_{k}\|.
\end{align}
The optimality condition of \cref{Equ:10}, together with the definition of $d_2$, yields
\begin{align}\label{Eq:d2}
\|d_2\| = \left\| \nabla f(u_{k+1}) - M^\top \lambda_{k+1} - \rho M^\top(F(x_{k+1}) -M u_{k+1}) \right\| 
\leq \|M\| \|\lambda_{k+1} - \lambda_k\|,
\end{align}
Substituting $g^*(y_k) = -g(x_k) + \langle x_k, y_k \rangle$ into $d_4$ and recalling $\|y_k\| \leq L_g$, we get
\begin{align}\label{Eq:d4}
\| d_4 \| &= \left\| 2h(x_{k+1}) + 2c_{k+1}h(x_{k+1}) \left( \langle x_k, y_k \rangle - g(x_{k}) - \langle x_{k+1}, y_k \rangle \right) \right\| \notag \\
&= \left\| 2h(x_{k+1})c_{k+1} \left( g(x_{k+1}) - g(x_k) + \langle x_k - x_{k+1}, y_k \rangle \right) \right\| \notag \\
&\leq 4 |h(x_{k+1})c_{k+1}|\cdot  L_g \| x_{k+1} - x_k \| \notag \\
&\leq 4 M_{h/g^2} M_g L_g \| x_{k+1} - x_k \|.
\end{align}
Since $y_k\in \partial g(x_k)$, it holds that $x_k\in \partial g^*(y_k)$, which implies $d_5=c_{k+1}^2h(x_{k+1})(x_{k}-x_{k+1})$, and
\begin{align}\label{eq:d5}
\|d_5\|\leq M_{h/g^2}\|x_{k+1}-x_k\|.
\end{align}
The remaining subgradient components follow directly from their definitions, i.e., 
\begin{align}
\|d_3\| &\leq \left( \frac{1}{r\rho} + 4C_2\|MM^\top\| \right) \| \lambda_{k+1} - \lambda_k \|,\, 	\|d_6\| \leq 4C_2\|MM^\top\| \| \lambda_{k+1} - \lambda_k \|. \label{Eq:d7}
\end{align}
Combining the estimates from \cref{Eq:d1,Eq:d2,Eq:d4,eq:d5,Eq:d7} and the triangle inequality, we obtain
$
\|D_{k+1}\| \leq \sum_{i=1}^6 \|d_i\| \leq T_4 \|z_{k+1} - z_k\|,
$
where $T_4$ is defined as
$
T_4 = {3L_gL_h}/{m_g^2}+L_{\nabla\phi}+L_gL_{h/g^2}+\rho M_{\nabla F}^2+L_\psi+M_{h/g^2}+4M_{h/g^2}M_gL_g+\|M\|+{1}/{r\rho}+8C_2\|MM^\top\|
$.
\end{proof}

\begin{theorem}\label{the:1}  
Suppose that Assumptions~\ref{ass:1}, \ref{0902ass:1} and \ref{1129ass:1} hold.
Then the following statements hold:
\begin{enumerate}
\item The following nonnegative series converge (i.e., are finite): 
\begin{equation}
    \sum_{k=1}^{+\infty} \|x_{k+1} - x_k\|^2 < +\infty, \,
\sum_{k=1}^{+\infty} \|u_{k+1} - u_k\|^2 < +\infty,  \,
\sum_{k=1}^{+\infty} \|\lambda_{k+1} - \lambda_k\|^2 < +\infty. 
\end{equation}

\item If $\{z_{k_q}\}_{q\geq 1}$ is a subsequence of $\{z_k\}_{k\geq 1}$ converging to $z^*=(x^*, u^*, \lambda^*)$, then there exists a further subsequence $\{k_{q_j}\}_{j\geq 1}$ of $\{k_q\}$ and $y^*\in \partial g(x^*)$ such that $\{Z_{k_{q_j}}\}_{j\geq 1}\to Z^*=(z^*,c^*,y^*,\lambda^*)$ with $c^*=1/g(x^*)$, and the Lyapunov function satisfies $P_{k_{q_j}}\to P(Z^*)$. Furthermore, $\{P_k\}_{k\geq 1}$ converges with $P_k\to P^*$ as $k\to +\infty$, where $P^*:=P(Z^*)$.

\item Let $\omega(\{z_k\}_{k\geq 1})$ denote the set of cluster points of $\{z_k\}_{k\geq 1}$, and define
$\Omega := \omega\left( \{Z_k\}_{k\geq 1} \right)$.
We then have $\Omega \subseteq \crit(P)$, where $\crit(\cdot)$ denotes the set of critical points of the corresponding mapping.
Consequently, any cluster point of the sequence $\{z_k\}_{k\geq 1}$ is a lifted critical point of problem~\cref{Equ:1}.
\item $\Omega$ is compact and $P$ is constant on $\Omega$, with this constant value equal to $P^* = \lim_{k\to+\infty} P_k$.
\end{enumerate}
\end{theorem}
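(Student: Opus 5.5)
The plan follows the standard template for subsequential convergence of ADMM-type schemes, driven by the sufficient decrease of Lemma~\ref{lem:6} and the relative error bound of Lemma~\ref{lem:7}.

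\emph{Part (i).} Sum the inequality of Lemma~\ref{lem:6} from $k=1$ to $K$. This gives $\sum_{k=1}^K\big(T_1\|x_{k+1}-x_k\|^2+T_2\|u_{k+1}-u_k\|^2+T_3\|\lambda_{k+1}-\lambda_k\|^2\big)\le P_1-P_{K+1}$, with $T_1,T_2,T_3>0$ by Assumption~\ref{1129ass:1}. It remains to show that $\{P_k\}$ is bounded below. By Assumption~\ref{0902ass:1}, $z_k$ stays in a compact set. There $c_k=1/g(x_k)\in[1/M_g,2/m_g]$, $\|y_{k-1}\|\le L_g$, and $f,\psi$ are continuous. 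Moreover, $g^*(y_{k-1})-\langle x_k,y_{k-1}\rangle\ge -g(x_k)$ by Fenchel--Young. Together with $h\ge0$, this gives $2c_kh(x_k)+c_k^2h(x_k)(g^*(y_{k-1})-\langle x_k,y_{k-1}\rangle)\ge c_kh(x_k)\ge0$. Hence $P_k$ is bounded below, the monotone sequence $\{P_k\}$ converges to some $P^*$, and letting $K\to\infty$ yields the three summability statements.

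\emph{Part (ii).} Take $z_{k_q}\to z^*$. By (i), $\lambda_{k_q-1}\to\lambda^*$ and $x_{k_q-1}\to x^*$. Also $c_{k_q}\to c^*=1/g(x^*)$ by continuity of $g$ on a neighborhood of $\mathcal S$; note $x^*\in\mathcal S$ since $\mathcal S$ is closed. The vectors $y_{k_q-1}$ are bounded by $L_g$, so we extract a further subsequence with $y_{k_{q_j}-1}\to y^*$. Outer semicontinuity of the convex subdifferential (closed graph) gives $y^*\in\partial g(x^*)$. This yields $Z_{k_{q_j}}\to Z^*$.

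The delicate point is $P_{k_{q_j}}\to P(Z^*)$, because $g^*$ is only l.s.c. I avoid this by using $g^*(y_{k-1})=\langle x_{k-1},y_{k-1}\rangle-g(x_{k-1})$, which converges to $\langle x^*,y^*\rangle-g(x^*)=g^*(y^*)$. The remaining terms of $P$ are continuous: $h$ is Lipschitz near $\mathcal S$, and $\delta_{\mathcal S}$ vanishes along the sequence and at $x^*$. Combined with the convergence of $\{P_k\}$ from (i), this gives $P^*=P(Z^*)$.

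\emph{Part (iii).} For $Z^*\in\Omega$ take the subsequence from (ii). Lemma~\ref{lem:7} provides $D_{k+1}\in\partial P(Z_{k+1})$ with $\|D_{k+1}\|\le T_4\|z_{k+1}-z_k\|\to0$ by (i). Since $Z_{k_{q_j}}\to Z^*$ and $P(Z_{k_{q_j}})\to P(Z^*)$, the closedness of the limiting subdifferential under $P$-attentive convergence yields $0\in\partial P(Z^*)$, i.e.\ $Z^*\in\crit(P)$.

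To pass to a lifted critical point I apply Lemma~\ref{lemma:2}. Here $P=\mathscr L_\rho+2C_2\|M^\top(\lambda-\bar\lambda)\|^2$, and at $Z^*$ we have $\bar\lambda=\lambda^*$, so the extra term has zero gradient in every variable. Hence $0\in\partial P(Z^*)$ transfers to $0\in\partial\mathscr L_\rho(x^*,u^*,\lambda^*,c^*,y^*)$ via the sum rule of Lemma~\ref{lem:new1}(ii). The hypotheses of Lemma~\ref{lemma:2} hold: $c^*=1/g(x^*)$, $y^*\in\partial g(x^*)$, $\mathcal S$ is regular, and $x^*\in\ri(\dom h)$ since $h$ is finite near $\mathcal S$. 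It may be cleaner to verify \cref{eq:3} directly from the limits of the components $d_1,d_2,d_3$ in the proof of Lemma~\ref{lem:7}, which I would do if the sum-rule transfer is delicate.

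\emph{Part (iv).} Boundedness of $\{Z_k\}$ (with $y$'s bounded) makes $\Omega$ bounded. Closedness follows from the standard diagonal argument: the set of cluster points of a sequence is always closed, and $\Omega$ is nonempty. Constancy of $P$ on $\Omega$ is exactly (ii): every $Z^*\in\Omega$ satisfies $P(Z^*)=\lim P_k=P^*$. Some care is needed, since (ii) was phrased via subsequences of $z_k$; the same argument applies directly to any subsequence of $Z_k$ converging to a point of $\Omega$, using the continuity trick for $g^*$ from part (ii).

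I expect the main obstacle to be the continuity of $P$ along the subsequence, namely the $g^*$ term and the $\delta_{\mathcal S}$ term. The Fenchel--Young equality trick resolves the former and closedness of $\mathcal S$ the latter.
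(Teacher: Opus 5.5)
Your proposal is correct and follows essentially the same route as the paper. You sum the descent inequality of Lemma~\ref{lem:6} with a Fenchel--Young lower bound on $P_k$, use the identity $g^*(y_{k-1})=\langle x_{k-1},y_{k-1}\rangle-g(x_{k-1})$ to get $P_{k_{q_j}}\to P(Z^*)$ even though $g^*$ is only l.s.c., apply the closedness of the limiting subdifferential with the bound of Lemma~\ref{lem:7}, and finish with Lemma~\ref{lemma:2}. Your remark that constancy of $P$ on $\Omega$ must be checked for an arbitrary convergent subsequence of $\{Z_k\}$, not only for the particular $Z^*$ built in (ii), slightly tightens the paper's argument for (iii)--(iv).
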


\begin{proof}
(i) By the Fenchel--Young inequality and the definition of $P_k$, we have \begin{equation}\label{1008eq:7}
    P_k\geq f(u_k) +h(x_k)/g(x_k) +\psi(z_k)\geq f(u_k) +\psi(z_k).
\end{equation} 
Since $f$ and $\psi$ are continuous and $\{z_k\}$ is bounded by Assumption~\ref{0902ass:1}, together with \cref{1008eq:7}, it follows that $\inf_{k\geq1} P_k >-\infty$. By the descent property of $P_k$ in Lemma~\ref{lem:6}, summing over $k=1$ to $K$ gives
\[\begin{aligned}
    &T_1\sum_{k=1}^{K} \|x_{k+1} - x_k\|^2 + T_2\sum_{k=1}^{K} \|u_{k+1} - u_k\|^2 + T_3\sum_{k=1}^{K} \|\lambda_{k+1} - \lambda_k\|^2 
    \leq P_1 - P_{K+1}\leq P_1-\inf_{k\geq1} P_k.
\end{aligned}\]
Letting $K \to +\infty$, and using $T_1, T_2, T_3>0$ proves (i).

(ii) By Assumption~\ref{0902ass:1}, the sequence $\{z_k\}_{k\geq 1}$ admits at least one subsequence $\{z_{k_q}\}_{q\geq 1}$ converging to $z^*=(x^*,u^*,\lambda^*)$. Furthermore, by Assumption~\ref{ass:1}~(v), the definition of $c$, and the continuity of $g$ (which follows from its Lipschitz continuity), we have $c_{k_q}\to c^*$ with $c^*=1/g(x^*)$. Since $g$ is Lipschitz continuous, every $y\in \partial g(x)$ satisfies $\|y\|\leq L_g$. Therefore, we can extract a further subsequence $\{k_{q_j-1}\}_{j\geq 1}$ from $\{k_q\}_{q\geq 1}$ such that $y_{k_{q_j-1}}$ converges to some limit $y^*$. Moreover, since $x_{k_{q_j-1}}\to x^*$ and $y_{k_{q_j-1}}\in \partial g(x_{k_{q_j-1}})$, the closedness property of the limiting subdifferential guarantees that $y^*\in \partial g(x^*)$. Combining the above facts together with (i), there exists $y^*\in \partial g(x^*)$ such that $\{Z_{k_{q_j}}\}\to Z^*=(z^*,c^*,y^*,\lambda^*)$ with $c^*=1/g(x^*)$.
Note that
\begin{align}\label{0809eq:1}
P_{k_{q_j}} &= c^2_{k_{q_j}}h(x_{k_{q_j}})\big(g^*(y_{k_{q_j}-1})+2/c_{k_{q_j}}-\langle x_{k_{q_j}},y_{k_{q_j}-1}\rangle\big)+\delta_\mathcal{S}(x_{k_{q_j}})+f(u_{k_{q_j}}) \notag \\
&\quad +\psi(x_{k_{q_j}},u_{k_{q_j}},\lambda_{k_{q_j}})+2C_2\big\|M^\top (\lambda_{k_{q_j}}-\lambda_{k_{q_j}-1})\big\|^2,
\end{align}
By \cref{1129Equ3.5} and the construction of the subproblem, we have $x_{k_{q_j}}\in \mathcal{S}$, so that $\delta_\mathcal{S}(x_{k_{q_j}})=0$ for all $j$.
Moreover, using the Fenchel–Young identity
$g^*(y_{k_{q_j}-1})=-g(x_{k_{q_j}-1})+\langle x_{k_{q_j}-1},y_{k_{q_j}-1}\rangle$,
together with the continuity of $g$, we obtain
\begin{align*}
g^*(y_{k_{q_j}-1})+\frac{2}{c_{k_{q_j}}}-\langle x_{k_{q_j}},y_{k_{q_j}-1}\rangle
&=-g(x_{k_{q_j}-1})+\frac{2}{c_{k_{q_j}}}+\langle x_{k_{q_j}-1}- x_{k_{q_j}},y_{k_{q_j}-1}\rangle\\
&\to -g(x^*)+\frac{2}{c^*},\quad \text{as }j\to+\infty.
\end{align*}
Combined with the continuity of $h$, the first term on the right‑hand side of \cref{0809eq:1} converges to $c^* h(x^*)$.
The limits of the remaining terms follow from the respective continuity assumptions.
Consequently,
\begin{align*}
\lim_{j\to +\infty}P_{k_{q_j}}
= c^* h(x^*) + f(u^*)+\psi(x^*,u^*,\lambda^*)
= P(z^*,c^*,y^*,\lambda^*).
\end{align*}
In other words, the monotonic sequence $\{P_k\}_{k\geq 1}$ possesses a convergent subsequence $\{P_{k_{q_j}}\}_{j\geq 1}$.
By a standard property of monotone real sequences, the whole sequence $\{P_k\}_{k\geq 1}$ is convergent, and hence $P_k \to P^*$ as $k \to +\infty$.

(iii) For the sequence $\{D_{k}\}$ defined in Lemma~\ref{lem:7}, we have
$D_{k_{q_j}}\in \partial P(Z_{k_{q_j}})$ for every $j\geq 1$, $D_{k_{q_j}}\to 0$ as $j\to+\infty$,
$Z_{k_{q_j}}\to Z^*$, and $P(Z_{k_{q_j}})\to P(Z^*)$ as $j\to+\infty$.
The closedness criterion for the limiting subdifferential guarantees that $0\in \partial P(Z^*)$,
which implies $\Omega\subseteq \crit(P)$.
Combined with the asymptotic property of $\lambda$, we further obtain $(z^*,c^*,y^*)\in \crit(\mathscr{L}_{\rho})$.
By virtue of Lemma~\ref{lemma:2}, $z^*$ is a lifted critical point of problem~\cref{Equ:1}.
Therefore, every cluster point of $\{z_k\}_{k\geq 1}$ is a lifted critical point of problem~\cref{Equ:1}.

(iv) The boundedness of $\{z_k\}$, $\{c_k\}$, $\{y_k\}$, and $\{\lambda_{k-1}\}$ implies that $\{Z_k\}$ is bounded; hence its cluster set $\Omega$ is nonempty and compact. By~(ii), every $Z^*\in \Omega$ satisfies $P(Z^*)=P^*$, so $P$ is constant on $\Omega$.
\end{proof}

\subsection{Convergence under the KL property} \label{Sect3.3} 

In this subsection, we establish the global convergence and convergence rates of the sequence $\{z_k\}$ generated by Algorithm~\ref{alg:1} under the KL property. 

\begin{theorem}\label{the:klconvergence}
Suppose that Assumptions~\ref{ass:1}, \ref{0902ass:1} and \ref{1129ass:1} hold. Then the following statements hold:
\begin{enumerate}
\item Suppose that the Lyapunov function $P$ satisfies the KL property at each point of $\Omega$. Then the sequence $\{z_k\}_{k \geq 1}$ converges to a lifted critical point $z^*$ of the problem \eqref{Equ:1} and $\sum_{k=1}^{+\infty}\|z_k-z_{k-1}\|<+\infty$.
\item Suppose, moreover, that the KL property holds uniformly around $\Omega$ with exponent $\theta\in[0,1)$ and desingularizing function
$\varphi(t)=\frac{c_l}{1-\theta}t^{1-\theta}$. Let $\varepsilon_k:=P_k-P^*$. Then:
\begin{enumerate}
\item If $\theta = 0$, then $\{z_k\}$ converges to $z^*$ in a finite number of iterations.

\item If $\theta \in \left(0, \frac{1}{2}\right)$, then  $\{\varepsilon_k\}$ converges $Q$-superlinearly to $0$ and $\{z_k\}$ converges $R$-superlinearly to $z^*$ as $k \to +\infty$.

\item If $\theta = \frac{1}{2}$, then $\{\varepsilon_k\}$ converges $Q$-linearly to 0 and $\{z_k\}$ converges $R$-linearly to $z^*$ as $k \to +\infty$.

\item If $\theta \in \left(\frac{1}{2}, 1\right)$, then $\{\varepsilon_k\}$ converges sublinearly to 0 and $\{z_k\}$ converges sublinearly to $z^*$ as $k \to +\infty$.
\end{enumerate}
\end{enumerate}
\end{theorem}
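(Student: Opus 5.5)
The plan follows the standard KL framework of Attouch--Bolte--Svaiter, applied to the Lyapunov sequence $P_k=P(Z_k)$. The three ingredients are already in place:
\begin{itemize}
\item the sufficient decrease $P_{k+1}-P_k\le -T\|z_{k+1}-z_k\|^2$ with $T:=\min\{T_1,T_2,T_3\}>0$, from Lemma~\ref{lem:6};
\item the relative error bound $\dist(0,\partial P(Z_{k+1}))\le T_4\|z_{k+1}-z_k\|$, from Lemma~\ref{lem:7};
\item the continuity and limit-set facts of Theorem~\ref{the:1}: $P_k\downarrow P^*$, $\Omega$ compact, and $P\equiv P^*$ on $\Omega$.
\end{itemize}

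\textbf{Proof of (i).} First dispose of the degenerate case. If $P_{k_0}=P^*$ for some $k_0$, the decrease inequality forces $z_{k+1}=z_k$ for all $k\ge k_0$, and there is nothing to prove. Otherwise $P_k>P^*$ for all $k$. Since $\dist(Z_k,\Omega)\to0$, for large $k$ we have both $\dist(Z_k,\Omega)<\eta$ and $P^*<P_k<P^*+\nu$, so the uniformized KL property (Lemma~\ref{0624lem:1}) gives
\[
\varphi'(P_k-P^*)\,\dist(0,\partial P(Z_k))\ge1 .
\]
One point needs care: $Z_k$ involves $y_{k-1}$ and $\lambda_{k-1}$, so $\|Z_{k+1}-Z_k\|$ is controlled by $\|z_{k+1}-z_k\|+\|z_k-z_{k-1}\|$ (with the $y$-component absorbed because it only enters through $x$-differences). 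This is harmless, since Lemma~\ref{lem:7} bounds the subgradient directly by $\|z_k-z_{k-1}\|$.

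Set $\Delta_k:=\varphi(P_k-P^*)-\varphi(P_{k+1}-P^*)$. By concavity of $\varphi$,
\[
\Delta_k\ge \varphi'(P_k-P^*)(P_k-P_{k+1})\ge \frac{T\|z_{k+1}-z_k\|^2}{T_4\|z_k-z_{k-1}\|}.
\]
Hence $\|z_{k+1}-z_k\|\le\sqrt{(T_4/T)\Delta_k\,\|z_k-z_{k-1}\|}$. By AM--GM, $2\|z_{k+1}-z_k\|\le\|z_k-z_{k-1}\|+(T_4/T)\Delta_k$. Summing, the $\Delta_k$ telescope, which gives $\sum_k\|z_{k+1}-z_k\|<\infty$. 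So $\{z_k\}$ is Cauchy and converges to some $z^*$, which is a lifted critical point by Theorem~\ref{the:1}(iii).

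\textbf{Rates (ii).} Use $\varphi'(t)=c_lt^{-\theta}$. Combining KL with Lemmas~\ref{lem:7} and~\ref{lem:6} yields
\[
\varepsilon_k^{2\theta}\le c_l^2T_4^2\|z_k-z_{k-1}\|^2\le \frac{c_l^2T_4^2}{T}(\varepsilon_{k-1}-\varepsilon_k).
\]
Writing $\sigma:=T/(c_l^2T_4^2)$, this is the recursion
\[
\varepsilon_k\le\varepsilon_{k-1}-\sigma\,\varepsilon_k^{2\theta}.
\]
The four cases then go as follows.
\begin{itemize}
\item $\theta=0$: each step with $\varepsilon_k>0$ decreases $\varepsilon$ by at least $\sigma$, so $\varepsilon_k=0$ after finitely many steps. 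The decrease inequality then freezes the iterates.
\item $\theta\in(0,\tfrac12)$: apply Lemma~\ref{lem:15} with $p=2\theta\in(0,1)$ to get Q-superlinear convergence of $\varepsilon_k$.
\item $\theta=\tfrac12$: the recursion reads $\varepsilon_k\le\varepsilon_{k-1}/(1+\sigma)$, which is Q-linear.
\item $\theta\in(\tfrac12,1)$: the standard comparison argument with $t\mapsto t^{1-2\theta}$ gives $\varepsilon_k=O\bigl(k^{-1/(2\theta-1)}\bigr)$.
\end{itemize}

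For the iterates, use the tail estimate obtained from the telescoped sum in (i):
\[
\|z_k-z^*\|\le\sum_{i\ge k}\|z_{i+1}-z_i\|\le\|z_k-z_{k-1}\|+\frac{T_4}{T}\varphi(\varepsilon_k)\le\sqrt{\varepsilon_{k-1}/T}+C\varepsilon_k^{1-\theta}.
\]
This transfers the R-rates: superlinear, linear, and $O\bigl(k^{-(1-\theta)/(2\theta-1)}\bigr)$ respectively.

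\textbf{Main obstacle.} The delicate steps are the index bookkeeping between $Z_k$ (which carries $y_{k-1}$ and $\lambda_{k-1}$) and $z_k$ in the KL/error-bound chain, and verifying that the sequence enters and stays in the uniform KL neighbourhood. I would handle the latter via $\dist(Z_k,\Omega)\to0$, which follows from boundedness and $\|z_{k+1}-z_k\|\to0$.

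For the superlinear case, one must also check that the $\sqrt{\varepsilon_{k-1}}$ term in the tail estimate still yields R-superlinear convergence. It does, since $\sqrt{\varepsilon_{k-1}}$ is itself a Q-superlinearly vanishing sequence.
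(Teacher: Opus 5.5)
Your proposal is correct and follows essentially the paper's route. You feed the sufficient decrease (Lemma~\ref{lem:6}) and the relative-error bound (Lemma~\ref{lem:7}) into the uniformized KL inequality (Lemma~\ref{0624lem:1}), obtain the recursion $\varepsilon_k\le\varepsilon_{k-1}-\sigma\varepsilon_k^{2\theta}$ (using Lemma~\ref{lem:15} for $\theta\in(0,\tfrac12)$), and bound the tail by $\sqrt{\varepsilon_{k-1}}$ and $\varepsilon_k^{1-\theta}$; your AM--GM step is the $\xi=\tfrac12$ case (up to constants) of the paper's $\xi$-weighted estimate, and you write out the finite-length, finite, linear and sublinear arguments that the paper only cites from Bo\c{t} et al.\ and Attouch--Bolte.

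The one slip is your parenthetical claim that the $y$-part of $Z_{k+1}-Z_k$ is controlled by $x$-differences. That claim is false for nonsmooth $g$, but you never use $\|Z_{k+1}-Z_k\|$, so nothing depends on it.
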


\begin{proof}
By Lemma~\ref{0624lem:1}, Lemma~\ref{lem:6}, Lemma~\ref{lem:7}, and Theorem~\ref{the:1}, the full sequential convergence (i) and linear convergence (ii)--c under the KL property follow from Bo{\c{t}} et al. \cite[Theorem 5.1]{boct2021extrapolated}, while the finite (ii)--a and sublinear (ii)--d convergence results follow from the arguments in the proof of Attouch and Bolte \cite[Theorem 2]{attouch2009convergence}. $\{z_k\}$ converges to a lifted critical point of problem \eqref{Equ:1}, as stated in \cref{the:1}~(iii).
With Lemma~\ref{lem:6} and Lemma~\ref{lem:7}, we have 
\[P_{k-1}-P_k
\geq \mu \|z_k-z_{k-1}\|^2
\geq \frac{\mu}{T_4^2}\dist^2\bigl(0,\partial P(Z_k)\bigr),\] 
where $\mu=\min\{T_1,T_2,T_3\}$.
Moreover, by the definition of the desingularizing function $\varphi$ and the KL property, it holds that
$
\dist(0,\partial P(Z_k)) \geq c_l^{-1}\,(P_k-P^*)^{\theta}$ and $P_k>P^*$.
Combining the above inequalities yields
$
\varepsilon_k - \varepsilon_{k-1} \leq -\left({\mu}/{T_4^2 c_l^2}\right)\cdot \varepsilon_k^{2\theta}$. This inequality satisfies the conditions in Lemma~\ref{lem:15}. We deduce that the sequence $\{\varepsilon_k\}$  satisfies $\lim_{k\to+\infty} \frac{\varepsilon_{k+1}}{\varepsilon_k}=0$,
which follows from the argument in the proof of Lemma~\ref{lem:15}. According to \cite[Chapter~9.1]{OrtegaRheinboldt1970}, the latter condition implies that $\{\varepsilon_k\}$ is $Q$-superlinearly convergent to $0$. Using the concavity of $\varphi$ and Lemma~\ref{lem:7}, we obtain that, for all sufficiently large $k\in \mathbb{N}$,
\[
\frac{\|z_{k+1}-z_k\|^2}{\|z_k-z_{k-1}\|}
\le
\frac{T_4}{\mu}[
\varphi(P_k-P^*)
-
\varphi(P_{k+1}-P^*)].\]
Let $\xi\in(0,1)$ be fixed. Following similar arguments as in the proof of Attouch and Bolte \cite[Theorem 1]{attouch2009convergence}, we obtain the following result:
\begin{align}\label{0126eq:7}
\sum_{j=k}^{k+l}\|z_{j+1}-z_j\|
\le
\frac{\xi}{1-\xi}\|z_k-z_{k-1}\|
+
\frac{T_4c_l}{\mu \xi(1-\theta)(1-\xi)}
\Big(
\varepsilon_k^{1-\theta}-\varepsilon_{k+l+1}^{1-\theta}
\Big).
\end{align}
Taking the limit $l\to+\infty$ in \cref{0126eq:7}, along with $z_k\to z^*$ and Lemma~\ref{lem:6}, we deduce
\begin{align*}
\|z_k-z^*\|\leq
\sum_{j=k}^{+\infty}\|z_{j+1}-z_j\|
\le
\frac{\xi}{(1-\xi)\sqrt{\mu}}\sqrt{\varepsilon_{k-1}}
+
\frac{T_4c_l}{\mu \xi(1-\theta)(1-\xi)}
\varepsilon_k^{1-\theta}.
\end{align*}
Since $\theta \in (0, \tfrac{1}{2})$, we have $\sqrt{\varepsilon_k} \geq \varepsilon_k^{1-\theta}$. Combining this with the monotonicity of $\{\varepsilon_k\}$, we obtain
\begin{align*}
\|z_k - z^*\| \leq \left( \frac{\xi}{(1-\xi)\sqrt{\mu}} + \frac{T_4 c_l}{\mu \xi (1-\theta)(1-\xi)} \right) \sqrt{\varepsilon_{k-1}}.
\end{align*}
Together with the convergence rate of $\{\varepsilon_k\}$, this completes the proof.
\end{proof}

\subsection{Convergence under the HVP-EB condition}\label{sec3.4} 

In this subsection, we use the HVP-EB condition with exponent $\theta \in [0,1)$ to 
ensure the global convergence and convergence rates of the sequence $\{z_k\}$. Moreover, to illustrate that the HVP-EB condition is a more 
natural and general setting, we note that the well-known KL property implies the HVP-EB condition, but the converse does not hold.

\begin{definition}[HVP-EB condition] \label{def:hvp}
We say that the HVP-EB condition holds uniformly along the generated sequence in a neighborhood of $\Omega$ with exponent $\theta\in[0,1)$ if there exist $\kappa > 0$, $\eta > 0$, and $\nu > 0$ such that 
\begin{align*}\left[P_{k+1} - P^*\right]^\theta \leq \kappa\|z_{k+1} - z_k\|,\end{align*}
when $Z_{k+1}\in \mathbb{B}({\Omega} ; \eta) \cap \left\{Z\in \mathbb{R}^n \!\times \!\mathbb{R}^m \!\times\! \mathbb{R}^q\!\times\! \mathbb{R} \!\times\! \mathbb{R}^n \!\times\!  \mathbb{R}^q: P^*\!<\!P(Z)\! < \!P^* \!+ \!\nu\right\}$.
\end{definition}

\begin{lemma}[The KL property implies the HVP-EB condition]\label{lem:100}
Suppose that Assumptions~\ref{ass:1}, \ref{0902ass:1} and \ref{1129ass:1} hold, and let $Z^* \in \Omega$.
If the function $P$ satisfies the KL property at the each point $Z^*\in \Omega$ with exponent
$\hat{\theta} \in [0,1)$ and parameters $\eta > 0$, $\nu > 0$, and
$\varphi(t) = \frac{c_l}{1-\hat{\theta}} t^{1-\hat{\theta}}$, 
then the HVP-EB conditionholds uniformly around $\Omega$ along the generated sequence with exponent $\theta = \hat{\theta}$.
\end{lemma}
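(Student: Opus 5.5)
The plan is to combine three earlier results: the uniformized KL inequality (Lemma~\ref{0624lem:1}) applied on the compact set $\Omega$, the subgradient bound of Lemma~\ref{lem:7}, and the set-level facts in Theorem~\ref{the:1}.

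By Theorem~\ref{the:1}~(iv), $\Omega$ is compact and $P\equiv P^*$ on $\Omega$. Since $P$ has the KL property at every point of $\Omega$ with the common desingularizing function $\varphi(t)=\frac{c_l}{1-\hat\theta}t^{1-\hat\theta}$, Lemma~\ref{0624lem:1} gives $\eta>0$ and $\nu>0$ with the following property. For every $Z$ satisfying $\dist(Z,\Omega)<\eta$ and $P^*<P(Z)<P^*+\nu$ we have
\[
\varphi'\bigl(P(Z)-P^*\bigr)\,\dist\bigl(0,\partial P(Z)\bigr)\ge 1 .
\]
Since $\varphi'(t)=c_l t^{-\hat\theta}$, this is exactly
\[
\bigl(P(Z)-P^*\bigr)^{\hat\theta}\le c_l\,\dist\bigl(0,\partial P(Z)\bigr).
\]
When $\hat\theta=0$ the left side equals $1$, and the same inequality still holds.

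Now take an index $k$ with $Z_{k+1}\in \mathbb{B}(\Omega;\eta)\cap\{P^*<P<P^*+\nu\}$. These are the same $\eta,\nu$ that will be used in Definition~\ref{def:hvp}. Lemma~\ref{lem:7} supplies $D_{k+1}\in\partial P(Z_{k+1})$ with $\|D_{k+1}\|\le T_4\|z_{k+1}-z_k\|$, so
\[
\dist\bigl(0,\partial P(Z_{k+1})\bigr)\le T_4\|z_{k+1}-z_k\|.
\]
Combining this with the KL inequality at $Z=Z_{k+1}$ gives
\[
[P_{k+1}-P^*]^{\hat\theta}\le c_lT_4\|z_{k+1}-z_k\|,
\]
which is the HVP-EB condition with $\kappa:=c_lT_4$ and $\theta=\hat\theta$.

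The main obstacle is that the constants in Lemma~\ref{lem:7} (namely $m_g$, $L_{h/g^2}$, $L_\psi$, etc.) are valid only on the bounded region $\bar{\mathcal X}_\Delta$. This is harmless here: Assumption~\ref{0902ass:1} places all iterates in $\mathcal X$, so $T_4$ is a fixed constant along the whole sequence. A second point to check is that the ball in Definition~\ref{def:hvp} is taken with respect to the full lifted variable $Z$, so that it matches $\dist(Z,\Omega)<\eta$ in Lemma~\ref{0624lem:1}; if needed, shrink $\eta$ so the two neighborhoods coincide. Finally, if the KL parameters differ from point to point, Lemma~\ref{0624lem:1} already provides a single uniform triple $(\eta,\nu,\varphi)$. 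Should the exponents also vary, one would cover $\Omega$ by finitely many balls and take the largest exponent, using $P-P^*<1$ after shrinking $\nu$. Under the hypothesis of a single exponent $\hat\theta$, however, this step is not needed.
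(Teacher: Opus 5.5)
Your proposal is correct and follows essentially the same route as the paper. It applies the uniformized KL property (Lemma~\ref{0624lem:1}) on the compact set $\Omega$, where $P\equiv P^*$, and uses $\varphi'(t)=c_l t^{-\hat\theta}$ to get $[P_{k+1}-P^*]^{\hat\theta}\le c_l\dist(0,\partial P(Z_{k+1}))$; it then bounds the distance by $T_4\|z_{k+1}-z_k\|$ via Lemma~\ref{lem:7}, which gives $\kappa=c_lT_4$, while your remarks on the constants and the matching of neighborhoods just spell out details the paper leaves implicit.
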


\begin{proof}
Suppose that $P$ satisfies the KL property at each point $Z^*\in \Omega$ with exponent
$\hat{\theta} \in [0,1)$ and associated parameters $\eta > 0$, $\nu > 0$, and
$\varphi(t) = \frac{c_l}{1-\hat{\theta}} t^{1-\hat{\theta}}$.
Since $\Omega$ is compact and $P$ is identically equal to $P^*$ on $\Omega$,
for any iterate $Z_{k+1}$ such that $Z_{k+1} \in \mathbb{B}(\Omega; \eta)$ and $P^* < P(Z_{k+1}) < P^* + \nu$,
by Lemma~\ref{0624lem:1} and Lemma~\ref{lem:7}, it follows that
\begin{align*}
\bigl[P_{k+1} - P^*\bigr]^{\hat{\theta}}
\leq c_l \dist\bigl(0, \partial P(Z_{k+1})\bigr)
\leq c_l T_4 \| z_{k+1} - z_k \|.
\end{align*}
Setting $\theta = \hat{\theta} \in [0,1)$ yields $[P_{k+1} - P^*]^{\theta}\leq c_l T_4 \| z_{k+1} - z_k \|$.
Thus, we conclude that the KL property of $P$ at each point $Z^*\in \Omega$
implies the HVP-EB condition along the generated sequence near $\Omega$ with exponent $\theta = \hat{\theta}$.
\end{proof}

\begin{lemma}\label{the:5}
Suppose that Assumptions~\ref{ass:1}, \ref{0902ass:1} and \ref{1129ass:1} hold, $Z^* \in \Omega$,
and that the HVP-EB condition holds uniformly around $\Omega$ along the generated sequence with parameters
$\eta > 0$, $\nu > 0$, $\kappa > 0$, and $\theta \in [0,1)$.
Then there exists an integer $k_0  \geq 1$ such that, for all
$k \geq k_0$, it holds that 
$P_k < P^* + \nu$ and ${Z}_k \in \mathbb{B}(\Omega; \eta)$. If $\theta\in(0,1)$, furthermore, $z_k \in \mathbb{B}(z^*; \eta)$.
\end{lemma}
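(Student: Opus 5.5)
The first two claims follow from Theorem~\ref{the:1} and the definition of $\Omega$. By Theorem~\ref{the:1}~(ii), $P_k \to P^*$. Since $\{P_k\}$ is nonincreasing (Lemma~\ref{lem:6}), $P_k \ge P^*$ for every $k$. Convergence then gives $k_1$ with $P_k < P^* + \nu$ for all $k\ge k_1$.

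For the neighborhood claim I will argue by contradiction using compactness. The sequence $\{Z_k\}$ is bounded, since $z_k$ is bounded by Assumption~\ref{0902ass:1}, $c_k\le 2/m_g$ by Lemma~\ref{0902lem:1}, $\|y_k\|\le L_g$, and $\lambda_{k-1}$ is bounded. Suppose $\dist(Z_k,\Omega)\ge \eta$ along some subsequence. Bolzano--Weierstrass gives a further convergent subsequence whose limit lies in $\Omega=\omega(\{Z_k\})$ by definition. This contradicts the uniform distance bound, so $\dist(Z_k,\Omega)\to 0$, and there is $k_2$ with $Z_k\in\mathbb{B}(\Omega;\eta)$ for $k\ge k_2$. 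Set $k_0:=\max\{k_1,k_2\}$.

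For the last assertion with $\theta\in(0,1)$, I will show that the whole sequence $\{z_k\}$ converges, necessarily to the given cluster point $z^*$, and then enlarge $k_0$.

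First I dispose of a degenerate case. If $P_{\bar k}=P^*$ for some $\bar k$, monotonicity forces $P_k=P^*$ for all $k\ge \bar k$. Lemma~\ref{lem:6} then gives $z_{k+1}=z_k$ from $\bar k$ onward, so the sequence is eventually constant and equal to $z^*$.

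Otherwise $P_k>P^*$ for all $k$, and the HVP-EB inequality applies for all $k\ge k_0$. I combine it with the descent from Lemma~\ref{lem:6}, written as $P_k-P_{k+1}\ge \mu\|z_{k+1}-z_k\|^2$ with $\mu=\min\{T_1,T_2,T_3\}$. I use the concave function $\varphi(t)=t^{1-\theta}$ and set $\varepsilon_k:=P_k-P^*$. Concavity gives
\[\varphi(\varepsilon_k)-\varphi(\varepsilon_{k+1})\ge(1-\theta)\varepsilon_k^{-\theta}(\varepsilon_k-\varepsilon_{k+1}).\]
The index shift is the delicate point: HVP-EB bounds $\varepsilon_k^\theta$ by $\kappa\|z_k-z_{k-1}\|$, one step earlier. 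Combining gives
\[\|z_{k+1}-z_k\|^2\le \frac{\kappa}{\mu(1-\theta)}\|z_k-z_{k-1}\|\bigl(\varphi(\varepsilon_k)-\varphi(\varepsilon_{k+1})\bigr).\]
Applying $2\sqrt{ab}\le a+b$ yields
\[2\|z_{k+1}-z_k\|\le \|z_k-z_{k-1}\|+C\bigl(\varphi(\varepsilon_k)-\varphi(\varepsilon_{k+1})\bigr),\qquad C:=\frac{\kappa}{\mu(1-\theta)}.\]
Summing from $k_0$ telescopes to $\sum_k\|z_{k+1}-z_k\|<\infty$. Hence $\{z_k\}$ is Cauchy and converges, and its limit must be the cluster point $z^*$. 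Choosing $k_0$ larger then gives $z_k\in\mathbb{B}(z^*;\eta)$.

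The main obstacle is this step. First, the index mismatch in the HVP-EB inequality must be handled by the $\sqrt{ab}$ trick above. Second, the hypothesis $\theta>0$ is what makes $\varphi$ concave with the derivative bound in the form used; the case $\theta=0$ is excluded, which matches the statement. The proof also relies on the fact that HVP-EB is only required along the iterates, which the first part already guarantees for $k\ge k_0$.
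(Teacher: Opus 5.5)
Your proof is correct, but for the last assertion it takes a different route from the paper. The first part (boundedness of $\{Z_k\}$, $\dist(Z_k,\Omega)\to 0$, and $P_k\downarrow P^*$) matches the paper.

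For $\theta\in(0,1)$, the paper proceeds by localization. It fixes an index $k_2\ge k_1$ with $\|z_{k_2}-z^*\|+2\sqrt{\varepsilon_{k_2}/\mu}+\frac{\kappa}{(1-\theta)\mu}\varepsilon_{k_2}^{1-\theta}<\eta$, as in \eqref{Eq:34}. It then runs an Attouch--Bolte-type induction: your one-step estimate, summed over the finite window $\{k_2+1,\dots,k_2+k\}$ as in \eqref{Eq:40}, shows that each new iterate stays in $\mathbb{B}(z^*;\eta)$. Finite length and convergence of $\{z_k\}$ are extracted only afterwards, in the following theorem.

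You instead use that the HVP-EB condition is assumed uniformly on $\mathbb{B}(\Omega;\eta)$. Since the first part already places every $Z_k$ with $k\ge k_0$ there, the one-step estimate holds for all $k\ge k_0$ without any induction. Summing it gives $\sum_k\|z_{k+1}-z_k\|<\infty$, hence $z_k\to z^*$, and the ball claim follows by enlarging $k_0$.

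This is legitimate: the paper's induction itself invokes HVP-EB only through $Z_j\in\mathbb{B}(\Omega;\eta)$, never through $z_j\in\mathbb{B}(z^*;\eta)$. Your argument is also shorter and already proves part (i) of the subsequent theorem. What the paper's version buys is an explicit entry index determined by \eqref{Eq:34}. It also provides a template that still works if the error bound is assumed only near the single point $z^*$. In that setting you cannot know in advance that later iterates remain where the inequality holds, so the localizing induction becomes indispensable and your shortcut is unavailable.

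One small correction to your closing remark: nothing in your argument needs $\theta>0$. For $\theta=0$ the map $t\mapsto t^{1-\theta}$ is linear, the concavity inequality holds with equality, and the same estimate goes through. In fact the nondegenerate case is then impossible, since $1\le\kappa\|z_k-z_{k-1}\|$ contradicts $\|z_k-z_{k-1}\|\to 0$. So the restriction to $\theta\in(0,1)$ in the statement is not forced by your method.
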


\begin{proof}
Since $\{Z_k\}$ is bounded and $\Omega$ is its set of accumulation points, we have $\dist(Z_k,\Omega)\to0$. Together with $P_k\to P^*$, for the given $\eta>0$ and $\nu>0$ there exists $k_1>0$ such that for $k\geq k_1$,
\begin{align*}
Z_k\in \big\{Z\in \mathbb{R}^n \!\times \!\mathbb{R}^m \!\times\! \mathbb{R}^q\!\times\! \mathbb{R} \!\times\! \mathbb{R}^n \!\times\!  \mathbb{R}^q: \dist(Z,\Omega)<\eta,\ P(Z)<P^*+\nu\big\},
\end{align*}
which means that for all $k \geq k_1$, it holds that 
$P_k < P^* + \nu$ and $Z_k \in \mathbb{B}(\Omega; \eta)$. If $\theta =0$, the first conclusion follows by taking $k_0 =k_1$. In the remainder of the proof, let $\theta\in (0,1)$.

By Lemma~\ref{lem:6}, the sequence $\{P_k\}_{k \geq 1}$ is nonincreasing and satisfies $P_k \geq P^*$ for all $k$. Combined with the facts that
$P_k \to P^*$ and $\|z_{k+1} - z_k\| \to 0$ as $k \to +\infty$, there exists
$k_2\geq k_1 >0$ such that
\begin{align}\label{Eq:34}
\|z_{k_2} - z^*\| + 2\sqrt{\frac{P_{k_2} - P^*}{\mu}}
+ \frac{\kappa}{(1 - \theta)\mu}(P_{k_2} - P^*)^{1 - \theta} < \eta,
\end{align}
and $P_{k_2} < P^* + \nu$.

We use induction to show that the sequence $\{z_k\}$ is contained in
$\mathbb{B}(z^*; \eta)$ for all $k > k_2$. By \cref{Eq:34}, it follows directly
that $z_{k_2} \in \mathbb{B}(z^*; \eta)$. 
Moreover, the inequalities $P^* \leq P_{k_2 + 1} \leq P_{k_2} < P^* + \nu$ hold trivially. 
Invoking Lemma \ref{lem:6}, we further obtain
\[\|z_{k_2+1} - z_{k_2}\| \leq \sqrt{\frac{P_{k_2} - P_{k_2 + 1}}{\mu}} 
\leq \sqrt{\frac{P_{k_2} - P^*}{\mu}},\]
and hence
\[\|z_{k_2 + 1} - z^*\| \leq \|z_{k_2} - z^*\| + \|z_{k_2+1} - z_{k_2}\| 
\leq \|z_{k_2} - z^*\| + \sqrt{\frac{P_{k_2} - P^*}{\mu}} 
\stackrel{\cref{Eq:34}}{<} \eta,
\]
where $\mu=\min\{T_1,T_2,T_3\}$. Thus, $z_{k_2 + 1} \in \mathbb{B}(z^*; \eta)$.
Now, for the induction hypothesis, assume that for all $j \in \{k_2 + 1, \dots, k_2 + k\}$,
$z_j \in \mathbb{B}(z^*; \eta)$ and that $z_{k_2 + k} \neq z_{k_2 + k + 1}$.
Since $\{P_k\}$ is nonincreasing, it follows that
$P^* < P_{k_2 + k + 1} \leq P_{k_2 + k} \leq \dots \leq P_{k_2 + 1} < P^* + \nu$.
To complete the inductive step, it suffices to show that $z_{k_2 + k + 1} \in \mathbb{B}(z^*; \eta)$. If $P_j =P^*$ for some $j\geq k_2 +1$, then by Lemma~\ref{lem:6}, $z_{l+1} =z_l$ for all $l\geq j$, and the conclusion follows. Hence, we may assume that $P_j >P^*$ for all $j\geq k_2 +1$. 
First, by the concavity of the function $x^a$ (with $a \in (0,1)$), we have
\begin{align}\label{Eq:38}
(P_j - P^*)^a - (P_{j + 1} - P^*)^a 
&\geq \frac{a (P_j- P_{j + 1})}{(P_j - P^*)^{1 - a}} 
\stackrel{{\rm Lemma~\ref{lem:6}}}{\geq} \frac{a \mu \|z_{j+1} - z_{j}\|^2}{(P_j - P^*)^{1 - a}},
\end{align}
for all $j \in \{k_2 + 1, \dots, k_2 + k\}$. Since $Z_j \in \mathbb{B}(\Omega; \eta)$,
the HVP-EB condition implies
\begin{align}\label{Eq:36}
(P_j - P^*)^\theta \leq \kappa \|z_{j} - z_{j-1}\|.
\end{align}
Since $P_j >P^*$, we have $\|z_j -z_{j-1}\| >0$.
The combination of \cref{Eq:38} (with $a = 1 - \theta$) and \cref{Eq:36} yields 
\[\frac{\kappa}{(1 - \theta)\mu} \|z_j - z_{j-1}\| \left[(P_j - P^*)^{1 - \theta} - (P_{j+1} - P^*)^{1 - \theta}\right] 
\geq \|z_{j+1} - z_{j}\|^2.\]
Consequently,
\begin{align}\label{Eq:39}
2\|z_{j+1} - z_{j}\| 
&\leq \|z_{j} - z_{j-1}\| + \frac{\|z_{j+1} - z_{j}\|^2}{\|z_{j} - z_{j-1}\|} \notag \\
&\leq \frac{\kappa}{(1 - \theta)\mu} \left[(P_j - P^*)^{1 - \theta} - (P_{j+1} - P^*)^{1 - \theta}\right] + \|z_{j} - z_{j-1}\|.
\end{align}
Summing \cref{Eq:39} over $j \in \{k_2 + 1, \dots, k_2 + k\}$ gives
\begin{align}\label{Eq:40}
&\sum_{j=k_2+1}^{k_2+k} \|z_{j+1} - z_j\| + \|z_{k_2+k+1} - z_{k_2+k}\| \notag \\
&\leq \frac{\kappa}{(1 - \theta)\mu} \left[(P_{k_2+1} - P^*)^{1 - \theta} - (P_{k_2+k+1} - P^*)^{1 - \theta}\right] + \|z_{k_2+1} - z_{k_2}\|.
\end{align}
Finally, applying the triangle inequality together with \cref{Eq:40}, we obtain
\begin{align*}
\|z_{k_2+k+1} - z^*\| 
&\leq \|z_{k_2} - z^*\| + \|z_{k_2+1} - z_{k_2}\| + \sum_{j=k_2+1}^{k_2+k} \|z_{j+1} - z_{j}\| \\
&\leq \|z_{k_2} - z^*\| + \frac{\kappa}{(1 - \theta)\mu}(P_{k_2+1} - P^*)^{1 - \theta} + 2 \|z_{k_2+1} - z_{k_2}\| \\
&\leq \|z_{k_2} - z^*\| + 2\sqrt{\frac{P_{k_2} - P^*}{\mu}} + \frac{\kappa}{(1 - \theta)\mu}(P_{k_2} - P^*)^{1 - \theta} \stackrel{\cref{Eq:34}}{<} \eta.
\end{align*}
This establishes that $z_{k_2 + k + 1} \in \mathbb{B}(z^*; \eta)$, completing the inductive step. Letting $k_0 =k_2$ completes the proof.
\end{proof}

\begin{theorem}
Suppose that Assumptions~\ref{ass:1}, \ref{0902ass:1} and \ref{1129ass:1} hold.
Let the HVP-EB condition hold uniformly around $\Omega$ along the generated sequence with parameters $\eta>0$, $\nu>0$, $\kappa>0$, and $\theta\in[0,1)$. Then:
\begin{enumerate}
\item The sequence $\{z_k\}_{k \geq 1}$ converges to a lifted critical point $z^*$ of the problem \eqref{Equ:1} and $\sum_{k=1}^{+\infty}\|z_k-z_{k-1}\|<+\infty$.
\item Denote $\varepsilon_k := P_k - P^*$, where $P^* = \lim_{k \to + \infty} P_k$. Then the following hold:
\begin{enumerate}
\item If $\theta = 0$, then the sequence $\{z_k\}$ converges to $z^*$ in a finite number of iterations.
\item If $\theta \in \left(0, \frac{1}{2}\right)$, then $\{\varepsilon_k\}$ converges $Q$-superlinearly to $0$ and $\{z_k\}$ converges $R$-superlinearly to $z^*$ as $k \to +\infty$.
\item If $\theta = \frac{1}{2}$, then $\{\varepsilon_k\}$ converges $Q$-linearly to $0$ and $\{z_k\}$ converges $R$-linearly to $z^*$ as $k \to +\infty$.
\item If $\theta \in \left(\frac{1}{2}, 1\right)$, then $\{\varepsilon_k\}$ converges sublinearly to 0 and $\{z_k\}$ converges sublinearly to $z^*$ as $k \to +\infty$.
\end{enumerate}
\end{enumerate}
\end{theorem}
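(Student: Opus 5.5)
The plan combines the sufficient decrease of Lemma~\ref{lem:6}, written as $P_k-P_{k+1}\geq \mu\|z_{k+1}-z_k\|^2$ with $\mu=\min\{T_1,T_2,T_3\}$, with the HVP-EB inequality $\varepsilon_{k+1}^\theta\le\kappa\|z_{k+1}-z_k\|$, valid for $k\ge k_0$ by Lemma~\ref{the:5}. First I would dispose of the degenerate case. If $\varepsilon_j=0$ for some $j$, then monotonicity and Lemma~\ref{lem:6} give $z_{l+1}=z_l$ for all $l\ge j$, so everything is trivial. Hence I assume $\varepsilon_k>0$ for all $k$.

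\textbf{Step (i): finite length and convergence.} For $\theta\in(0,1)$ I would reuse the telescoping estimate \cref{Eq:39} from the proof of Lemma~\ref{the:5}. This is the estimate combining concavity of $t\mapsto t^{1-\theta}$ with the HVP-EB bound. Summing \cref{Eq:39} from $k_0$ to arbitrary $K$ gives $\sum_{j\ge k_0}\|z_{j+1}-z_j\|\le \|z_{k_0+1}-z_{k_0}\|+\frac{\kappa}{(1-\theta)\mu}\varepsilon_{k_0+1}^{1-\theta}<\infty$. So $\{z_k\}$ is Cauchy, its limit is the unique cluster point $z^*$, and Theorem~\ref{the:1}(iii) makes $z^*$ a lifted critical point. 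For $\theta=0$, HVP-EB reads $1\le\kappa\|z_{k+1}-z_k\|$ whenever $\varepsilon_{k+1}>0$. This contradicts $\|z_{k+1}-z_k\|\to0$ from Theorem~\ref{the:1}(i). So $\varepsilon_k=0$ eventually, the degenerate case applies, and we get finite termination, i.e.\ (ii)(a), together with (i).

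\textbf{Step (ii): recursion for $\varepsilon_k$.} Squaring HVP-EB and inserting it into the decrease gives
\[
\varepsilon_k-\varepsilon_{k+1}\ \ge\ \mu\|z_{k+1}-z_k\|^2\ \ge\ \frac{\mu}{\kappa^2}\,\varepsilon_{k+1}^{2\theta},\qquad k\ge k_0,
\]
which is the same inequality obtained in the KL proof of Theorem~\ref{the:klconvergence}.
\begin{itemize}
\item[(b)] For $\theta\in(0,\tfrac12)$ we have $2\theta\in(0,1)$, and Lemma~\ref{lem:15} with $\sigma=\mu/\kappa^2$ and $p=2\theta$ yields Q-superlinear convergence of $\varepsilon_k$.
\item[(c)] For $\theta=\tfrac12$ the recursion becomes $\varepsilon_{k+1}\le \varepsilon_k/(1+\mu/\kappa^2)$, which is Q-linear.
\item[(d)] For $\theta\in(\tfrac12,1)$ I would use the standard Attouch--Bolte argument. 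Set $\Lambda(t)=t^{1-2\theta}$ and split into the two cases $\varepsilon_{k+1}^{2\theta}\ge \tfrac12\varepsilon_k^{2\theta}$ or not. In both cases $\Lambda(\varepsilon_{k+1})-\Lambda(\varepsilon_k)\ge c>0$, hence $\varepsilon_k=O(k^{-1/(2\theta-1)})$.
\end{itemize}

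\textbf{Step (iii): transfer to the iterates.} Summing \cref{Eq:39} from $k$ to $\infty$ gives
\[
\|z_k-z^*\|\le\sum_{j\ge k}\|z_{j+1}-z_j\|\le \|z_k-z_{k-1}\|+\frac{\kappa}{(1-\theta)\mu}\varepsilon_k^{1-\theta}\le \sqrt{\varepsilon_{k-1}/\mu}+\frac{\kappa}{(1-\theta)\mu}\varepsilon_{k-1}^{1-\theta}.
\]
\begin{itemize}
\item For $\theta\le\tfrac12$ the square-root term dominates, since $\varepsilon^{1-\theta}\le\varepsilon^{1/2}$ for small $\varepsilon$. The bound is then $O(\sqrt{\varepsilon_{k-1}})$, so R-superlinear and R-linear rates follow from (b) and (c).
\item For $\theta>\tfrac12$ the $\varepsilon^{1-\theta}$ term dominates, giving $\|z_k-z^*\|=O(k^{-(1-\theta)/(2\theta-1)})$, which is sublinear.
\end{itemize}

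\textbf{Main obstacle.} The delicate point is making sure the HVP-EB inequality is actually applicable at every index used. Lemma~\ref{the:5} provides $Z_k\in\mathbb{B}(\Omega;\eta)$ and $P_k<P^*+\nu$ for $k\ge k_0$, and the degenerate case covers $\varepsilon_k=0$. The remaining subtlety is that HVP-EB is indexed by the step $z_{k+1}-z_k$ while the decrease pairs $\varepsilon_k-\varepsilon_{k+1}$ with the same step, so the indices must be aligned carefully in Step (ii).
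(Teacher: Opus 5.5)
Your proposal is correct and follows essentially the same route as the paper. Both use the localization from Lemma~\ref{the:5}, the telescoping estimates \cref{Eq:39} and \cref{Eq:40} for finite length (with $\theta=0$ handled by contradiction with $\|z_{k+1}-z_k\|\to0$), the recursion $\varepsilon_k-\varepsilon_{k+1}\ge(\mu/\kappa^2)\varepsilon_{k+1}^{2\theta}$ fed into Lemma~\ref{lem:15} or the linear/sublinear arguments, and the transfer bound \cref{Eq:45} to the iterates; the only blemish is a harmless off-by-one in your first summation, since summing \cref{Eq:39} from $j=k_0$ puts $\|z_{k_0}-z_{k_0-1}\|$ and $\varepsilon_{k_0}^{1-\theta}$ on the right-hand side.
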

\begin{proof}
(i) By Lemma~\ref{the:5}, there exists $k_0\geq1$ such that $P_k<P^*+\nu$ and $Z_k\in\mathbb{B}(\Omega;\eta)$ for all $k\geq k_0$. If there exists a finite integer $K\geq k_0$ such that $P_K=P^*$, then the nonincreasing property of $\{P_k\}$ gives $P_k=P^*$ for all $k\geq K$. From the sufficient-descent inequality given in Lemma~\ref{lem:6}, it follows that $z_{k+1}=z_k$ for all $k\geq K$. This yields the finite-convergence property of the sequence $\{z_k\}$.

Suppose that $P_k>P^*$ for all $k\geq k_0$. If $\theta=0$, the HVP--EB condition gives
$1 \leq \kappa \|z_k-z_{k-1}\|$ for all $k>k_0$. This contradicts Theorem~\ref{the:1}~(i), which ensures that
$\|z_k-z_{k-1}\|\to 0$ as $k\to+\infty$. Therefore, finite convergence must occur.

For $\theta \in (0,1)$, the finite-length property follows by taking the limit as $k \to +\infty$ in \cref{Eq:40}. Specifically, this yields
\begin{align}\label{Eq:41}
\sum_{j=k_0+1}^{+\infty} \|z_{j+1} - z_j\| \leq \frac{\kappa}{(1 - \theta)\mu} (P_{k_0+1} - P^*)^{1 - \theta} + \|z_{k_0+1} - z_{k_0}\|.
\end{align}
The sequence $\{z_k\}$ is Cauchy and converges to a lifted critical point of problem \eqref{Equ:1}, as stated in \cref{the:1}~(iii).

(ii) By $P_k>P^*$ together with the HVP-EB condition and Lemma~\ref{lem:6}, it follows that
\begin{align}\label{0612eq:1}
\varepsilon_{k}^{2\theta} \leq \kappa^2 \|z_{k} - z_{k-1}\|^2 \leq \frac{\kappa^2}{\mu} (P_{k-1} - P_{k}) = \frac{\kappa^2}{\mu} (\varepsilon_{k-1} - \varepsilon_{k}),\,\forall\,k> k_0.
\end{align}
By invoking \cref{Eq:41} and leveraging the convergence of $\{z_k\}$ to $z^*$, we establish that 
\begin{equation}\label{Eq:45}
    \begin{aligned}
\| z_k - z^* \| \leq \sum_{j=k}^{+\infty} \| z_{j+1} - z_j \| 
&\leq \frac{\kappa}{(1 - \theta)\mu} (P_{k} - P^*)^{1 - \theta} + \| z_{k} - z_{k-1} \| \\
&\leq \frac{\kappa}{(1 - \theta)\mu} \varepsilon_k^{1 - \theta} + \frac{1}{\sqrt{\mu}} \sqrt{\varepsilon_{k-1}}.
\end{aligned}
\end{equation}
Let $\theta = 0$. If $\varepsilon_k > 0$ for $k > k_0$, we have $0<\frac{\mu}{\kappa^2} \leq \varepsilon_{k-1} - \varepsilon_{k}$. As $k \to + \infty$, the right-hand side tends to zero, then $0<\frac{\mu}{\kappa^2} \le0$, which leads to a contradiction. Therefore, there exists a finite integer $K > k_0$ such that $\varepsilon_K = 0$, and $\varepsilon_k = 0$ for all $k \geq K$. This implies $\{\varepsilon_k\}$ converges to zero in finitely many iterations. Since $0 \leq \mu\|z_{k + 1} - z_k\|^2 \leq P_k - P_{k+1}$, for all $ k \in \mathbb{N}$, combining this inequality with the finite convergence of $\{\varepsilon_k\}$ establishes statement~(ii)--a. When $\theta \in (0, \frac{1}{2})$, by invoking \cref{0612eq:1} and Lemma \ref{lem:15}, we deduce that $\{\varepsilon_k\}$ converges $Q$-superlinearly to $0$, as in (ii)--(b) of Theorem~\ref{the:klconvergence}. 
When \(\theta = \frac12\), it follows from \cref{0612eq:1} that for all \(k > k_0\),
\begin{equation}\label{1008eq:8}
\varepsilon_{k} \leq \frac{1}{1 + \frac{\mu}{\kappa^2}} \varepsilon_{k-1}.
\end{equation}
Since \(\big(1 + {\mu}/{\kappa^2}\big)^{-1} \in (0, 1)\), together with \cref{1008eq:8}, the sequence \(\{\varepsilon_k\}\) converges linearly to 0 as \(k \to +\infty\).
When $\theta \in \left(\frac12,1\right)$, using \cref{0612eq:1}, the sublinear convergence analysis of $\{\varepsilon_k\}$ proceeds analogously to the arguments in \cite[Theorem 4]{boct2020proximal} and \cite[Theorem 3]{bourkhissi2025convergence}.

We now derive the convergence rate of $\{z_k\}$ for the corresponding $\theta$ from that of $\{\varepsilon_k\}$.
For $\theta \in (0, \frac{1}{2})$, we first observe that $\varepsilon_k^{1-\theta} \leq \varepsilon_k^{1/2}$ for $k > k_0$. Substituting this inequality into \cref{Eq:45}, we derive 
\begin{equation}\label{1008eq:9}
    \| z_k - z^* \| \leq \frac{\kappa}{(1 - \theta)\mu} \sqrt{\varepsilon_k} + \frac{1}{\sqrt{\mu}} \sqrt{\varepsilon_{k-1}}.
\end{equation}
Since $\{\varepsilon_k\}$ is nonincreasing, \cref{1008eq:9} implies 
\begin{equation}\label{1008eq:10}
    \| z_k - z^* \| \leq \frac{\kappa + (1-\theta)\sqrt{\mu}}{(1-\theta)\mu} \sqrt{\varepsilon_{k-1}}, 
\end{equation}
whence we establish the relationship between the convergence rates of $\{z_k\}$ and $\{\varepsilon_k\}$. \cref{1008eq:10} implies that $\{z_k\}$ converges $R$-superlinearly to $z^*$.  
Substituting $\theta = \frac{1}{2}$ into \cref{Eq:45}, and using $\varepsilon_k^{1-\theta} \geq \varepsilon_k^{1/2}$ for $\theta \in \left(\frac{1}{2}, 1\right)$, we can similarly prove the remaining convergence rate results for $\{z_k\}$ in (ii)--c and (ii)--d.
\end{proof}

\section{Numerical experiments}\label{sec:numerical}

In this section, we present numerical results of the proposed BPLADMM algorithm for Example~\ref{0425example:1} on the MNIST image classification dataset~\cite{lecun1998gradient}. The experiment is implemented in MATLAB R2021a on a standard laptop with an Intel i5-10210U CPU and 8 GB RAM.

\subsection{Parameter settings and closed-form solutions of subproblems}

This subsection presents the parameter setup and core subproblem solutions for the proposed BPLADMM algorithm.
The MNIST dataset is used for numerical validation, containing $28 \times 28$ grayscale handwritten digits across 10 classes, with 60,000 training and 10,000 test samples.
All images are normalized to $[0,1]$ and flattened into 784-dimensional vectors, while discrete labels are converted into one-hot vectors for the classification network. We randomly select 10,000 training and 2,500 test samples to balance computational cost and validity.

For implementation of BPLADMM, we rewrite the problem in Example~\ref{0425example:1} in standard form.
We adopt the standard softmax mapping \cite{bridle1989training}: 
\[\sigma^{(i)}(t)=\frac{e^{t_i}}{\sum_{p=1}^{K}e^{t_p}}.\]
Then, the nonlinear mapping term is formulated as $F^{(i)}(x) = \sigma^{(i)}(X^\top v_i + \hat b_i) \in \mathbb{R}^{K}$, in which $v_i$ is the feature vector of the $i$-th sample. Let 
\[F(x) = \big((F^{(1)}(x))^\top, \dots, (F^{(N)}(x))^\top\big)^\top,\, M = I_{NK},\] which implies $F(x)=u$.
In our experiments, we adopt a dynamically chosen auxiliary function $\phi_k(x)=\tfrac{1}{2}x^\top(\tau I-\nabla F(x_k)^\top \nabla F(x_k))x$
to obtain the closed-form solution of the $x$-subproblem. Since $\sigma$ is the softmax activation function, an appropriate choice of $\tau$ always guarantees that this $\phi(x)_k$ consistently satisfies Lemma~\ref{1122lem:1}, which is consistent with the variable-kernel conditions used in Algorithm~\ref{alg:1}. This choice also cancels the non-diagonal quadratic term generated by the linearized augmented-Lagrangian penalty and preserves a closed-form shrinkage update.
In our numerical experiments, the parameters are set as $\gamma = 10^{-4}$, $\rho = 0.9$, $\tau = 15$, with a maximum iteration number of $10^4$. 
Following \cite{chowdhury2024poissonian,wang2022minimizing}, we assume $\inf_k g(x_k) \geq \varepsilon > 0$ with $\mathcal{S} = \mathbb{R}^n$, which is numerically verifiable. 
The stopping criteria for BPLADMM are defined as
\begin{align*}
\err_1 &= \left\|\frac{g(x)\,\partial h(x) - h(x)\,\nabla g(x)}{[g(x)]^2} + \nabla F(x)^\top \lambda \right\|,\,
\err_2 = \left\|\nabla f(u) - \lambda\right\|,\\
\err_3 &= \left\|F(x_{k+1}) - u_{k+1}\right\|,\,
\err_4 = \max\Big(\frac{\|x_{k+1}-x_k\|}{\|x_k\|},\frac{\|u_{k+1}-u_k\|}{\|u_k\|},\frac{\|\lambda_{k+1}-\lambda_k\|}{\|\lambda_k\|}\Big),
\end{align*}
and the algorithm terminates when \text{Residual}:=$\max\left(\err_1,\err_2,\err_3,\err_4\right) \leq 0.01$.
The subproblems in \cref{1129Equ3.5} and \cref{Equ:10} admit closed‑form solutions. Let $H_k=\lambda_k+\rho(F(x_k)-u_k)$ and then the $x$‑subproblem has the following closed-form solution
\begin{align*}
x_{k+1} &= \underset{x}{\arg \min} \Bigl\{ c\gamma\|x\|_1 - \bigl\langle \gamma c^3\|x_k\|x_k ,\, x-x_k \bigr\rangle +\widetilde \psi(x,u_k,\lambda_k;x_k) + D_{\phi_k}(x,x_k) \Bigr\}\\[4pt]
&= \underset{x}{\arg\min} \left\{ c\gamma\|x\|_1 + \frac{\tau}{2} \left\| x - \left(x_k-\frac{\nabla F(x_k)^\top H_k-\gamma c^3\|x_k\|x_k}{\tau} \right)\right\|^2 \right\}\\[4pt]
&= \operatorname{shrink}\!\left( x_k - \frac{ \nabla F(x_k)^\top H_k - \gamma c^3\|x_k\|x_k }{\tau},\; \frac{c\gamma}{\tau} \right),
\end{align*}
where $\operatorname{shrink}(p,q)$ denotes the standard shrinkage operator \cite{combettes2005signal,wang2008new}.
The $u$-subproblem solution reads 
\[u_{k+1} = \underset{u}{\arg\min} \left\{ -\frac{1}{m}\sum_{i=1}^{m}\log(u_{i}^\top Y_{i}) + \frac{\rho}{2}\left\|F(x_{k+1}) - u + \frac{\lambda_k}{\rho}\right\|^2 \right\},\]
from Example~\ref{0425example:1}. 
Setting the gradient to zero and solving the ensuing scalar equation yields the following closed-form solution:
\[
    u_{i,j}=
    \begin{cases}
    \dfrac{1}{2}\left(\nu_{i,k_i} + \sqrt{\nu_{i,k_i}^2 + \frac{4}{m\rho}}\right), & j = k_i,\\[4pt]
    \nu_{i,j}, & j \neq k_i,
    \end{cases}
    \]
where $\nu_i$ denotes the $i$-th block of $F(x_{k+1}) + \lambda_k/\rho$, and the index $k_i$ is chosen such that $Y_{i,k_i} = 1$.
Note that the logarithm function requires a positive domain input. Fortunately, according to the closed-form solution of the $u$-subproblem, all components of $u$ can be guaranteed to stay positive during iteration by appropriately choosing the penalty parameter $\rho$, provided that the initial value $u_0$ is component-wise positive. This is because the update of u depends on $F(x_{k+1})+\lambda_k/\rho$, where the mapping $F(\cdot)$ is strictly positive. The positivity of $u$ can be further verified in numerical experiments.

\subsection{Experimental results}

In this section, we implement Algorithm~\ref{alg:1} and investigate its numerical performance. Owing to the complex and unique formulation of the target problem, no existing algorithms are suitable for direct comparison against our approach. For the BPLADMM algorithm, the initial points are generated by following MATLAB command: x0 = 0.001 * ones(10*(n+1)); u0 = 0.001 * ones(10*m); lambda0 = u0.

\begin{figure}[h]
\centering

\begin{minipage}[t]{0.48\textwidth}
\centering
\includegraphics[width=\textwidth,height=4.5cm]{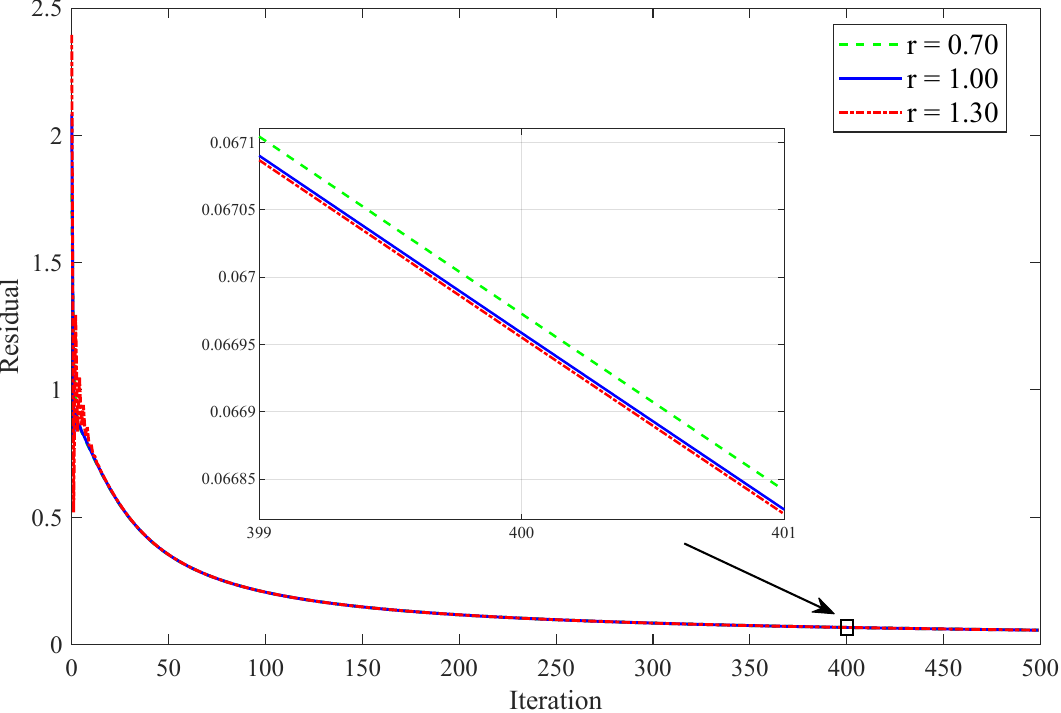}
\vspace{-0.5cm}
\caption{Residual versus iteration number under different $r$ values.}
\label{fig:1}
\end{minipage}
\hfill
\begin{minipage}[t]{0.48\textwidth}
\centering
\includegraphics[width=\textwidth,height=4.5cm]{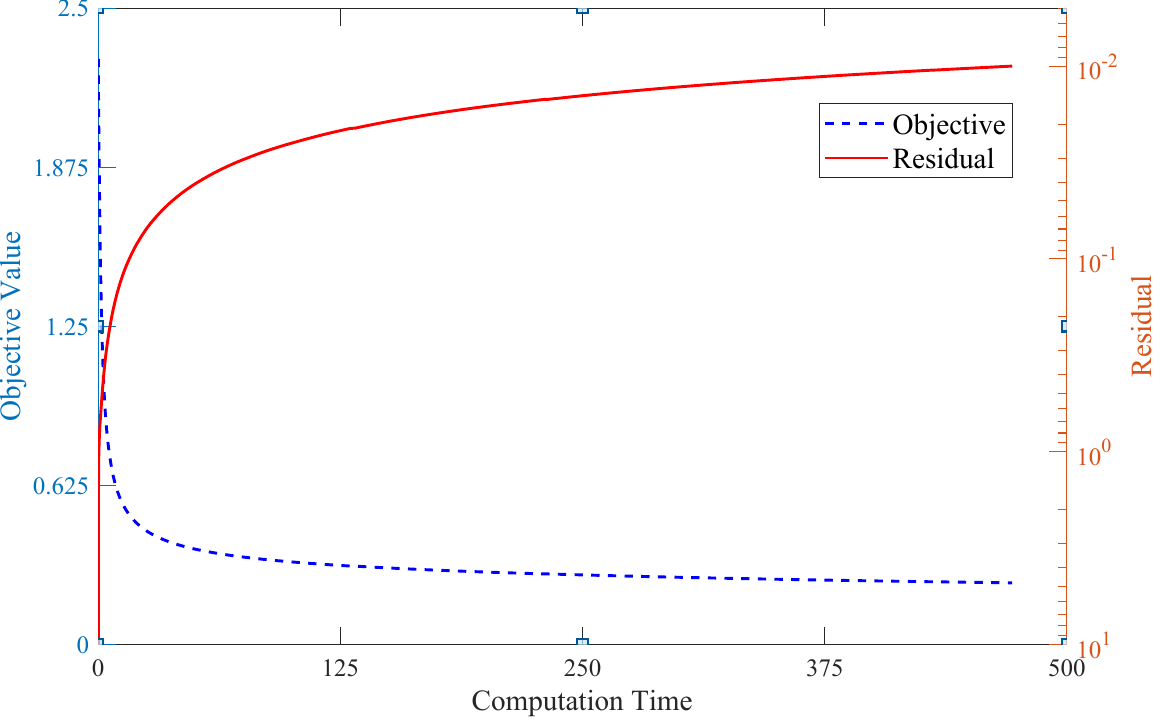}
\vspace{-0.5cm}
\caption{Convergence curves versus computation time.}
\label{fig:2}
\end{minipage}
\vspace{10pt} \\  
\begin{minipage}[t]{0.48\textwidth}
\centering
\includegraphics[width=\textwidth,height=4.5cm]{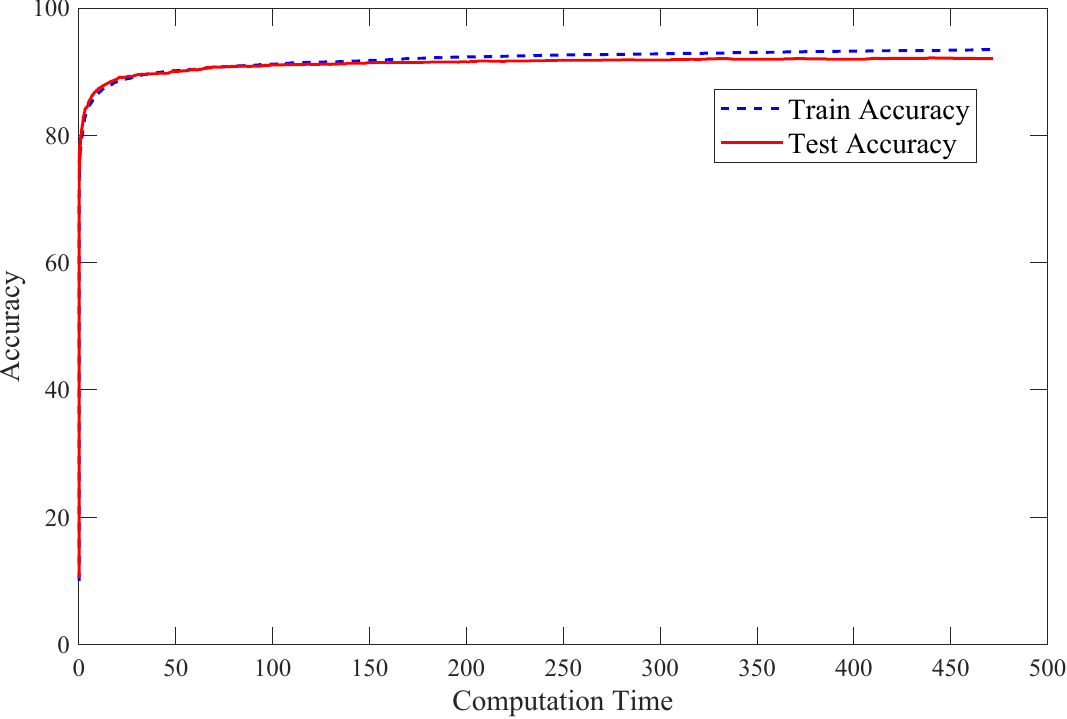}
\vspace{-0.5cm}
\caption{Classification accuracy versus computation time.}
\label{fig:3}
\end{minipage}
\hfill
\begin{minipage}[t]{0.48\textwidth}
\centering
\includegraphics[width=\textwidth,height=4.5cm]{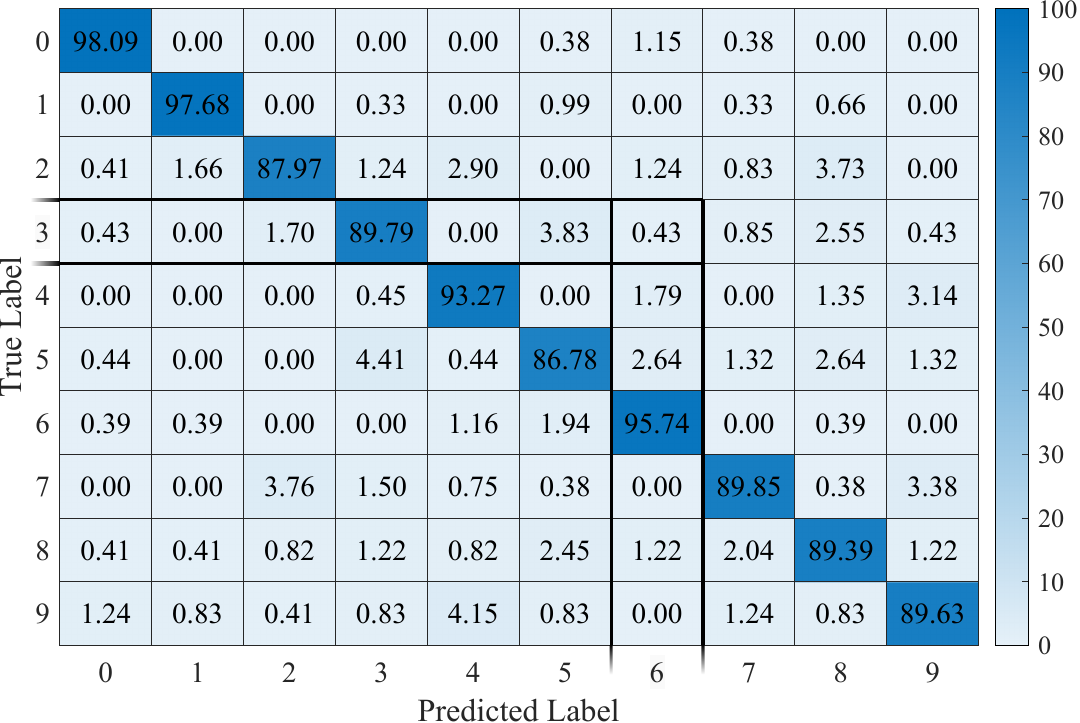}
\vspace{-0.5cm}
\caption{Confusion matrix on test dataset.}
\label{fig:4}
\end{minipage}
\end{figure}
Figure~\ref{fig:1} plots the convergence curves of residual versus iteration number over 500 iterations with three different parameters $r=0.70$, $r=1.00$, and $r=1.30$. The embedded zoom-in subplot presents the detailed local variations of residual near 400 iterations. $r=1.30$ outperforms the other two settings, achieving faster convergence and a smaller residual in the late stage.
This indicates that BPLADMM is flexible and effective over a wide range of $r$. Thus, we adopt $r=1.30$ for the subsequent experiment.
Figure~\ref{fig:2} plots the objective value and residual against computation time. The objective drops rapidly and stabilizes near 0.2422, while the residual reaches the target of $10^{-2}$ at 471.84 seconds.
Figure~\ref{fig:3} plots the training and test classification accuracy against computation time. Both training and test accuracy rise rapidly at the early stage and gradually saturate as computation time increases. The training accuracy is slightly higher than the test accuracy, while the two curves remain close throughout, indicating mild generalization gap and no severe overfitting.
Figure~\ref{fig:4} presents the normalized confusion matrix. Most test samples are correctly classified, with high diagonal values across all classes. The misclassification rates remain low, confirming the method’s effective discrimination among digits.
These results further validate the efficiency of the proposed BPLADMM.

\section{Conclusions}\label{sec:conclusions}

In this paper, we propose the BPLADMM for solving problem~\cref{Equ:1}. The proposed BPLADMM allows a wide range of multiplier step sizes, which enhances its flexibility in parameter tuning. For problem~\cref{Equ:1}, following \cite{zhou2025min}, we eliminate the fractional term in the objective function by introducing a new variable, thereby reformulating the fractional component as a non-fractional problem. The resulting Lyapunov function no longer contains fractional terms and enables easy-to-solve subproblems by linearizing the nonlinear part of the functional constraints within the augmented Lagrangian. We prove that any accumulation point of the sequence generated by BPLADMM is a lifted critical point of problem~\cref{Equ:1}. Moreover, under mild conditions, including the assumption that the constructed Lyapunov function satisfies the KL property with exponent $\theta \in [0,1)$, we establish the global convergence of the entire iterative sequence. Furthermore, we derive convergence rates for both the Lyapunov function sequence and the iterative sequence under HVP-EB condition and KL property; in particular, superlinear convergence is achieved when $\theta \in (0, \frac{1}{2})$. Numerical experiments confirm the effectiveness of the proposed method.

\end{document}